\newtheorem*{lemma}{Lemma}
\newtheorem*{thm}{Theorem}
\newtheorem*{cor}{Corollary}
\newcommand{\iso}{\overset{\sim}{\rightarrow}}
\newcommand{\twoheaddownarrow}{\overset{\sim}{\twoheaddownarrow}}
\newcommand{\nc}{\newcommand}
\nc{\Ker}{\operatorname{Ker}} \nc{\rker}{\operatorname{rKer}}
\nc{\im}{\operatorname{Im}}
\nc{\stab}{\operatorname {Stab}}
\nc{\ann}{\operatorname {Ann}}
\nc{\Id}{\operatorname {Id}}
\nc{\Prim}{\operatorname {Prim}}
\nc{\Real}{\operatorname {Re}}
\nc{\Ext}{\operatorname {Ext}}
\nc{\rad}{\operatorname {rad}}
\nc{\sn}{\operatorname {sn}}
\nc{\wt}{\operatorname {wt}}
\nc{\Max}{\operatorname {Max}}
\nc{\height}{\operatorname {ht}}
\nc{\Supp}{\operatorname {Supp}}
\begin{document}

\title[Catalan sets]{A new interpretation of the Catalan numbers}
\thanks{$^1$ Work supported in part by the Israel Scientific Foundation grant 797/14.}
\author[Anthony Joseph and Polyxeni Lamprou]{Anthony Joseph and Polyxeni Lamprou}
{}

\maketitle


\begin{abstract}
Towards the study of the Kashiwara $B(\infty)$ crystal, sets $H^t,\, t\in \mathbb N$ of functions were introduced given by equivalence classes of unordered partitions satisfying certain boundary conditions \cite{J}.

Here it is shown that $H^t$ is a Catalan set of order $t$, that is to say the cardinality of $H^t$ is the $t${th} Catalan number $\mathcal C_t$.  This is a new description of a Catalan set and moreover admits some remarkable features.

Thus to $H^t$ there is an associated labelled graph $\mathscr G_t$ which is shown to have a \textit{canonical} decomposition into $(t-1)!$ subgraphs each with $2^{t-1}$ vertices.  These subgraphs, called $S$-graphs, have some tight properties which are needed for the study of $B(\infty)$.  They are described as labelled hypercubes in $\mathbb R^{t-1}$ whose edges connecting vertices with equal labels are missing.

It is shown that the number of \textit{distinct} hypercubes so obtained is again a Catalan number, namely $\mathcal C_{t-1}$. They define functions which depend on a coefficient set $(c_1,c_2,\ldots,c_{t-1})$ of non-negative integers. When the latter are non-zero and pairwise distinct, the vertices of the $S$-graphs describe distinct functions.  Moreover this property is retained if certain edges are deleted and certain vertices identified.  In particular when these coefficients are all equal and non-zero, it is shown that every hypercube degenerates to a simplex, resulting in exactly $t$ distinct functions, which for example are exactly those needed in the description of $B(\infty)$ in type $A$.

Some examples of $S$-graphs not of the type obtained above are also given.
\end{abstract}
\medskip


\section{Introduction}\label{1}
Throughout this paper, we denote by $\mathbb N=\{0, 1, 2, \ldots\}$ the set of natural numbers.

\subsection{} \label{1.1}
The Catalan numbers form a sequence of integers; for all $t\in \mathbb N$, the $t$th Catalan number is given by the formula $$\mathcal C_t=\frac{(2t)!}{t!(t+1)!}$$ or, recursively by $$\mathcal C_t=\sum\limits_{j=0}^{t-1}\mathcal C_{t-1-j}\mathcal C_j,$$ with $\mathcal C_0=1$. A collection of sets $X_t,\, t\in \mathbb N$ such that $|X_t|=\mathcal C_t$ for all $t\in \mathbb N$ is called a Catalan set. Many examples of Catalan sets are known; the triangulations of the $(t+2)$-gon, the Dyck paths from $(0,0)$ to $(0, 2t)$ and the nilpotent ideals in the Borel subalgebra of $\mathfrak{sl}_t$.  In \cite{S} one can find 66 examples of Catalan sets and a few more in \cite{S2}.

\subsection{} \label{1.2}
In this paper we give a new example of a Catalan set $H^t$ which appeared in \cite{J} while studying the Kashiwara crystal $B(\infty)$. The elements of $H^t$ are equivalence classes of unordered partitions (Section \ref{2.1}) into $t+1$ with boundary conditions (Section \ref{2.2}). They parametrize certain functions which are involved in an inductive construction of ``dual Kashiwara functions'', which in turn are used to describe $B(\infty)$. The observation that the cardinalities of $H^t$ for small $t$ are given by the Catalan numbers is due to S. Zelikson who suggested that this should be true in general. We prove the Zelikson conjecture in Theorem \ref{4.1}.

\subsection{} \label{1.3}
Most interestingly, our Catalan set $H^t$ has the following remarkable property: it decomposes into a union of $(t-1)!$ subsets, each of cardinality $2^{t-1}$ and whose common union has $t$ elements. Moreover, this decomposition is canonical if one considers the {\it graph of links} $\mathscr G_t$ of $H^t$, introduced in \cite[Section 6]{J}. This graph has labels on both vertices and edges. These follow a number of rules (Section \ref{5}). Then the aforementioned subsets of cardinality $2^{t-1}$ are the vertices of subgraphs ($S$-graphs) with some very special properties vital to the construction of dual Kashiwara functions. It is shown that $\mathscr G_t$ has a unique decomposition into $S$-graphs each determined by the natural order relation on a set of $t-1$ non-negative pairwise distinct integers. Curiously, there are exactly $\mathcal C_{t-1}$ isomorphism classes of such labelled graphs, rather than the expected $(t-1)!$. These $S$-subgraphs of $\mathscr G_t$ are labelled hypercubes whose edges connecting vertices having equal labels are missing.

\subsection{} \label{1.4}
In the example of nilpotent ideals in the Borel subalgebra of $\mathfrak{sl}_t$, two subsets of cardinality $2^{t-1}$ are known, namely the commutative ideals (a result due to Peterson published in \cite{K}) and the nilradicals of the parabolic subalgebras of $\mathfrak{sl}_t$. However, even in the case $t=3$ (and so $\mathcal C_2=2$) the decomposition of the set of nilpotent ideals of the Borel subalgebra of $\mathfrak{sl}_3$ into the two subsets of cardinality $4$ is quite different from the decomposition of $H^3$. Moreover, the graph $\mathscr G_t$ of links of $H^t$, is quite different to the graph of orderings of nilpotent ideals (Section \ref{7.4}).

\subsection{} \label{1.5}
The vertices of $\mathscr G_t$ with labels $t$ are called extremal. Via Theorem \ref{4.1}, their cardinality is again the Catalan number $\mathcal C_{t-1}$. It is shown in \ref{5.7} that each $S$-subgraph is, after deleting some edges, a disjoint union of trees rooted at the extremal elements. This is of some importance in constructing the dual Kashiwara functions for $B(\infty)$.

\subsection{} \label{1.6}
The vertices of an $S$-subgraph of $\mathscr G_t$ define functions depending on $t-1$ positive integers $c_1, c_2,\dots c_{t-1}$. These functions are distinct when the $c_i,\, 1\le i\le t-1$ are pairwise distinct, but start to coincide when the $c_i$ coalesce. The corresponding hypercubes can be degenerated so that vertices still describe distinct functions. In the most extreme case when the $c_i$ are all equal, every hypercube degenerates to a simplex (Section \ref{5.8}).\\


\noindent {\bf Acknowledgement.} We would like to thank S. Zelikson for his valuable contribution to this project that started in \cite{J} and is still on-going. In particular, we would like to thank him for pointing out that the cardinalities of the sets $H^t$ are the Catalan numbers as well as for bringing \cite{BKR} to our attention.

\section{Diagrams of order $t+1$ : definition and properties}\label{2}
In this section we recall some definitions and results given in \cite{J} that will be needed in the sequel.

\subsection{Unordered Partitions}\label{2.1}

Let $t$ be a non-negative integer.  A diagram $\mathscr D$ of order $t+1$ is a collection of $t+1$ columns $C_1,C_2,\ldots,C_{t+1}$ placed on the $x$-axis with the subscript denoting the $x$ co-ordinate of the corresponding column. We view the columns as a stack of square blocks of equal size with the lowest block of each column (if it is not empty) having the same $y$ co-ordinate.  The rows $R_i:i=1,2,\ldots,$ are indexed by their $y$ co-ordinate. The height of $C_i$, denoted by $\operatorname{ht}\, C_i$, is defined to be the number of blocks it contains and the height of a diagram $\mathscr D$, denoted by $\operatorname{ht}\, \mathscr D$, is defined to be the maximal height of its columns. Then $\mathscr D$ is just the presentation of the unordered partition $(\operatorname{ht}\,C_1,\operatorname{ht}\,C_2,\ldots,\operatorname{ht}\,C_{t+1})$.

\subsection{Boundary Conditions}\label{2.2}
Let $s$ be a positive integer. Distinct columns $C,C'$ of a diagram $\mathscr D$ are said to be neighbouring at height $s$ if they have height $\geq s$ and every column between them has height $<s$.

A left (resp. right) extremal column $C$ of $\mathscr D$ is one which has no neighbours to the left (resp. right) at height $\operatorname{ht}\, C$.

A column $C$ of a diagram $\mathscr D$ is said to be strongly extremal if it is of the maximal height $r:=\operatorname{ht}\, \mathscr D$  and  is left (resp. right) extremal if $r$ is odd (resp. even). This can also be expressed as saying that $C$ is the leftmost (resp. rightmost) highest column of $\mathscr D$. As a consequence $\mathscr D$ admits exactly one strongly extremal column. For the empty diagram of order $t+1$ we take by convention $C_{t+1}$ to be its strongly extremal column (of height zero).\\

\noindent {\bf Definition.} The left (resp. right) boundary condition on $\mathscr D$ is that the height of any left (resp. right) extremal column of $\mathscr D$ must be either even (resp. odd) or the height of $\mathscr D$.\\

From now on, we only consider diagrams which satisfy both the left and right boundary conditions. Notice that this means that $C_{t+1}$ cannot be empty, unless the diagram is empty.\\

\noindent {\bf Example.}\\
\[\ytableausetup{notabloids}
\begin{ytableau}
\none & \none & \none & \none & \none & *(white) & *(white) & \none & \none & \none &\none & \none & *(white) & \none & \none &  *(white)\\
\none & *(white) & *(white) & \none & \none & *(white) & *(white) & \none & *(white) & \none & \none & *(white) & *(white) & *(white) &  \none  & *(white)\\
*(white) & *(white) & *(white) & *(white) & \none & *(white) & *(white) &  *(white) & *(white)  & *(white) & \none & *(white)  & *(white)  & *(white)  & *(white) &  *(white)
\end{ytableau}\]

\noindent Only the third diagram satisfies the boundary conditions.

\subsection{Operations on diagrams}\label{2.3}
We recall some constructions and results given in \cite[Sections 2, 3]{J}.

\subsubsection{Adjunction/removal of dominoes}\label{2.3.1}

A domino $D$ is a pair of blocks lying on the topmost and second to topmost rows of the same column.

Consider a domino $D$ being adjoined to a column $C$ of height $i$ in a diagram $\mathscr D$ to give a new column $C \sqcup D$ of height $i+2$.  Then $D$ is called an even (resp. odd) domino if $i$ is even (resp. odd) and is called a left (resp. right) domino if $C \sqcup D$ is the left (resp. right) neighbour at heights $i+1,i+2$ of a column $C'$ of height $\geq i+2$ in the new diagram.

The diagram $\mathscr D \sqcup D$ obtained by adjoining a left even or right odd domino $D$ to a diagram $\mathscr D$ again satisfies the boundary conditions. Moreover, this operation does not change the height of the diagram $\mathscr D$, nor its strongly extremal column. We call the map $\mathscr D \rightarrow \mathscr D \sqcup D$ domino adjunction. The inverse of this map is called domino removal.

A diagram to which no dominoes can be adjoined is called {\it complete}. Starting with a diagram $\mathscr D$ we may adjoin to it (necessarily finitely many) dominoes until we obtain a complete diagram $\widehat{\mathscr D}$. We call the operation $\mathscr D\rightarrow \widehat{\mathscr D}$ the completion of $\mathscr D$. Note that $\mathscr D$ and $\widehat{\mathscr D}$ have the same height.

A diagram from which no dominoes can be removed is called {\it deplete}.\\

\noindent {\bf Example.}
We may complete the diagram on the left (which is deplete) by adding a left even domino to the third column:

\[\ytableausetup{notabloids}
\begin{ytableau}
*(white) & \none & \none & *(white)\\
*(white) & *(white) & \none & *(white)
\end{ytableau}
\rightarrow
\ytableausetup{notabloids}
\begin{ytableau}
*(white) & \none & *(white) & *(white)\\
*(white) & *(white) & *(white) & *(white)
\end{ytableau}\]

\subsubsection{Adjunction/removal of half-dominoes}\label{2.3.2}
Let $C$ be the strongly extremal column of $\mathscr D$. Consider a single block adjoined to the top of $C$. The new diagram $\mathscr D^+$ satisfies the boundary conditions and the column $C^+$ ($C$ with a block on top) is its strongly extremal column. We call the operation $\mathscr D\rightarrow \mathscr D^+$ half-domino adjunction. Note that even when $\mathscr D$ is complete $\mathscr D^+$ might not be complete.

Similarly, if a diagram $\mathscr D$ has a unique column $C$ of maximal height (which is necessarily its strongly extremal column), we may remove the top block to obtain a new diagram $\mathscr D^-$ which satisfies the boundary conditions, since $\mathscr D$ does. Then the column $C^-$ (defined to be $C$ after removal of its top block) is the strongly extremal column of $\mathscr D^-$.\\

\noindent {\bf Example.} The strongly extremal column of the diagram below is the last one; we may add a block on top of it:
\[\ytableausetup{notabloids}
\begin{ytableau}
\none & \none  & \none & \none & \none &\none  & \none & \none & *(white) \\
*(white) & \none & \none & *(white) &  \none[\rightarrow] & *(white) & \none & \none & *(white)\\
*(white) & *(white) & \none & *(white)& \none & *(white) & *(white) & \none & *(white)
\end{ytableau}\]

\noindent \textbf{Remark.} (Repeated half-domino adjunction). We may repeat the process of half-domino adjunction to $\widehat{\mathscr D^+}$.  In this $C^+$ is again the strongly extremal column in $\widehat{\mathscr D^+}$ which is now of height $\height\, \mathscr D+1$.  The new diagram can be obtained by just adding two complete rows to the bottom of the diagram or by adjoining a vertical domino to the strongly extremal column and completing.

\subsubsection{Adjunction/removal of equal adjacent rows}\label{2.3.3}
If two adjacent rows $R_{i+1},R_{i+2}$ admit the same number of boxes, then they may be cancelled to obtain a diagram of height decreased by $2$ without upsetting the boundary conditions. Vice versa, two equal rows may be added to a diagram, to obtain a new one, with height increased by $2$ which still satisfies the boundary conditions. Note that adjuction/removal of two identical rows does not change the strongly extremal column of a diagram. A diagram in which no two adjacent rows can be cancelled is called {\it reduced}.\\

\noindent {\bf Example.} We may add two complete rows at the bottom of the diagram on the left:
\[\ytableausetup{notabloids}
\begin{ytableau}
\none & \none  & \none & \none & \none & *(white) & \none & \none & *(white)\\
\none & \none  & \none & \none &\none & *(white) & *(white) & \none & *(white)\\
*(white) & \none & \none & *(white) & \none[\rightarrow] & *(white) & *(white) & *(white) & *(white)\\
*(white) & *(white) & \none & *(white) & \none & *(white) & *(white) & *(white) & *(white)
\end{ytableau}\]

\subsubsection{}\label{2.3.4}
The above three operations, namely, adjunction (and removal) of dominoes (and half-dominoes) and adjunction (and removal) of equal adjacent rows may be combined to give an equivalence relation on diagrams. We denote by $[\mathscr D]$ the equivalence class of a diagram $\mathscr D$.

The set of equivalence classes of diagrams of order $t+1$ is denoted
by $H^{t+1}$.  For any representative in $[\mathscr D]$ the $x$ co-ordinate of a strongly extremal column is independent of the choice of representative, hence we may talk about the strongly extremal column of $[\mathscr D]$.  Then for
any $j$, with $1\le j\le t+1$ we let $H_j^{t+1}$ denote the set of equivalence classes in $H^{t+1}$ whose strongly extremal column is $C_j$.

\subsection{Properties}\label{2.4}
\begin{enumerate}
\item \cite[Lemma 2.3.6]{J} A complete diagram is reduced if and only if its two bottom rows are not identical (equivalently, if it has a column of height $\le 1$).
\item \cite[Lemma 2.3.7]{J} Every equivalence class of diagrams admits exactly one deplete diagram $\mathscr D$. Let $h$ be its height; then, for every $k\in \mathbb N$ there exists exactly one complete diagram of height $h+k$ in the equivalence class of $\mathscr D$. Two of these are reduced. Their heights are $h$ and $h+1$. 
\item Let $[\mathscr D]\in H_j^{t+1}$, where $\mathscr D$ is a complete reduced diagram. Then $\widehat{\mathscr D^+}\in [\mathscr D]$ is complete, $C_j$ is its strongly extremal column and it obtains from $\mathscr D$ by adjoining two bottom rows to all columns to the right (resp. left) of $C_j$ if $\operatorname{ht}\, \mathscr D$ is even (resp. odd) \cite[Lemma 2.3.4]{J}. Hence, by $(1)$ and $(2)$ above, $\operatorname{ht}\,\widehat{\mathscr D^+}=\operatorname{ht}\, \mathscr D-1$ if and only if all columns to the left (resp. right) of $C_j$ have height $\ge 2$. Otherwise, $\operatorname{ht}\,\widehat{\mathscr D^+}=\operatorname{ht}\, \mathscr D+1$.
\item \cite[Lemma 2.3.2]{J} Let $\mathscr D$ be a complete diagram of height $r$. Then
\begin{enumerate}
\item $\operatorname{ht}\,C_1 \geq r-1$ and $\operatorname{ht}\,C_1=r$ if $r$ is even.
\item $\operatorname{ht}\, C_{t+1}\geq r-1$ and $\operatorname{ht}\,C_{t+1}=r$ if $r$ is odd.
\end{enumerate}
\end{enumerate}

\subsection{Duality}\label{2.5}
For any diagram $\mathscr D$, we define its dual $\mathscr D^*$ to be the diagram that obtains from $\mathscr D$ by adding a complete bottom row and reversing the order of its columns. As an unordered partition, if $\mathscr D=(\height\,C_1, \height\,C_2, \dots, \height\,C_{t+1})$, then $\mathscr D^*=(\height\, C_{t+1}+1, \height\, C_t+1, \dots, \height\, C_1+1)$. The map $*:[\mathscr D]\rightarrow [\mathscr D^*]$ is an involution in $H^{t+1}$.\\

\noindent {\bf Example.} The dual diagram of
\[\ytableausetup{notabloids}
\begin{ytableau}
*(white) & \none & \none & *(white) \\
*(white) & *(white) & \none & *(white)
\end{ytableau}\]
is the diagram
\[\ytableausetup{notabloids}
\begin{ytableau}
 *(white) & \none & \none & *(white)\\
*(white) & \none & *(white) & *(white) \\
*(white) & *(white) & *(white) & *(white)
\end{ytableau}\]
Then the dual of the latter diagram is
\[\ytableausetup{notabloids}
\begin{ytableau}
*(white) & \none & \none & *(white)\\
*(white) & *(white) & \none & *(white)\\
*(white) & *(white) & *(white) & *(white)\\
*(white) & *(white) & *(white) & *(white)
\end{ytableau}\]
which is equivalent to the initial diagram by \ref{2.3.3}.

\section {Catalan sets}\label{3}

\subsection{}\label{3.1}
Recall that $H^{t+1}$ denotes the set of equivalence classes of diagrams of order $t+1$. We computed by hand that $|H^{t+1}|$ equals $1, 2, 5, 14, 42$ for $t=0, 1, 2, 3, 4$.  (Note that $H^1$ consists of the empty diagram with a single empty column).

S. Zelikson noticed that the above numbers are the first Catalan numbers and suggested that in general $|H^{t+1}|=\mathcal C_{t+1}$, where we recall that $\mathcal C_t= \frac{(2t)!}{(t+1)!t!}$.

We prove the Zelikson conjecture in Section \ref{4.1}. In Sections \ref{3.2} and \ref{3.3} we give preliminary lemmata for this purpose. It turns out that a more general result holds. Indeed, we will show in \ref{4.1} that for all $j$, with $1\le j\le t+1$, one has that $|H^{t+1}_j|=\mathcal C_{j-1}\mathcal C_{t-j+1}$. In particular, $|H_{t+1}^{t+1}|=\mathcal C_t$.

\subsection{}\label{3.2}
Let $\mathscr D$ be a diagram and $[\mathscr D]$ its equivalence class. We define the height $\height\, [\mathscr D]$ of $[\mathscr D]$ to be the height of the unique deplete diagram in $[\mathscr D]$. It is the minimal height of a diagram in $[\mathscr D]$. Let $^rH^s$ denote the subset of elements of $H^s$ of height $r$. Since the number of blocks in each row of a reduced diagram strictly decreases from bottom to top, it follows that $^rH^s=\emptyset$ unless $0\leq r \leq s-1$.  Let $^rH_j^s$ (resp. $^{\leq r}H_j^s$) denote the subset of elements in $H_j^s$ of height $r$ (resp. at most $r$). Then, by the boundary conditions,

$$^rH^j_j=\emptyset,\quad \textrm{unless}\quad r \quad \textrm{is even and}\quad ^rH^j_1=\emptyset, \quad \textrm{unless}\quad r\quad \textrm{ is odd}.\eqno{(*)}$$

Suppose that $\mathscr D$ and $\mathscr D'$ are diagrams of order $j$ and $k$ respectively, such that the height of the last column $C_j$ of $\mathscr D$ is the same as the height of the first column $C_1'$ of $\mathscr D'$. Then we can merge them into a new diagram $\mathscr D''=(\height\, C_1, \height\, C_2, \dots, \height\, C_j=\height\, C'_1, \height\, C_2', \dots, \height\, C_k')$. It is immediate that $\mathscr D''$ satisfies the boundary conditions and thus is a diagram of order $j+k-1$. When $\height\, C_1' - \height\,C_j$ is a positive (resp. negative) even number $s$, we may still merge the two diagrams if we first adjust the height of $\mathscr D$ (resp. $\mathscr D'$) by adding to it $s$ (resp. $-s$) full rows.\\

\noindent {\bf Example.}
\[\ytableausetup{notabloids}
\begin{ytableau}
\none & \none  & \none & \none & \none & *(white) & \none & \none & *(white) & \none & *(white) & \none & *(white) & *(white)& \none & *(white) & \none & \none & *(white) & \none\\
\none & \none  & \none & \none & \none[\times] & *(white) & *(white) & \none & *(white)& \none[\simeq] & *(white) & *(white) & *(white) & *(white) & \none[\times] & *(white) & *(white) & \none & *(white) & \none[\rightarrow]\\
*(white) & \none & *(white) & *(white) & \none & *(white) & *(white) & *(white) & *(white) & \none & *(white) & *(white) & *(white) & *(white) & \none & *(white) & *(white) & *(white) & *(white) & \none
\end{ytableau}\]
\[\ytableausetup{notabloids}
\begin{ytableau}
*(white) & \none & *(white) & *(white) & \none & \none & *(white)\\
*(white) & *(white) & *(white) & *(white) & *(white) & \none & *(white)\\
*(white) & *(white) & *(white) & *(white) & *(white) & *(white) & *(white)
\end{ytableau}\]

Both initial diagrams are of order 4. We adjust the height of the first diagram by adding two complete rows so that its last column has height 3 equal to that of the first column of the second diagram. Then we merge them and obtain a diagram of order 7.

\begin {lemma} For all $t \in \mathbb
N$ and all $j$, with $1< j< t+1$, merging of height adjusted complete diagrams at the
$j^{th}$ column gives bijections
\begin{enumerate}
\item $^rH^j_j \times \ ^{\leq (r-1)}H_1^{t+2-j}  \iso \ ^rH^{t+1}_j$, if $r$ is even.
\item $^{\leq (r-1)}H^j_j \times \  ^rH_1^{t+2-j}\iso \ ^rH^{t+1}_j$, if $r$ is odd.
\end{enumerate}
\end {lemma}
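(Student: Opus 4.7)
The argument constructs a pair of mutually inverse maps via merging and splitting at the column $C_j$; I treat case (1) ($r$ even) in detail, since case (2) is entirely dual, with the roles of left and right extremality (and of $H^j_j$, $H^{t+2-j}_1$) exchanged.

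For the merging map, given $([\mathscr D],[\mathscr D'])\in {}^rH^j_j\times {}^{\leq r-1}H^{t+2-j}_1$, take the complete reduced representative of $[\mathscr D]$ of height $r$, in which $\height C_j=r$. The deplete height $r'$ of $[\mathscr D']$ is odd by $(*)$ with $r'\le r-1$, so the complete reduced representative of $[\mathscr D']$ of height $r'+1$ (which exists by \ref{2.4}(2)) has $\height C'_1=r'+1$. If $r'+1<r$, adjoin $(r-r'-1)/2$ pairs of identical full rows at the bottom, raising $\height C'_1$ to $r$. The two representatives now agree at the meeting column and can be merged by identifying $C_j$ with $C'_1$, yielding a diagram $\mathscr E$ of order $t+1$. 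One then checks: (i) $\mathscr E$ satisfies the boundary conditions, with left-extremality of columns at positions $\le j$ inherited from $\mathscr D$ and right-extremality at positions $\ge j$ from $\mathscr D'$; (ii) $C_j$ is the strongly extremal column of $\mathscr E$, since every column to its right has height $<r$, making $C_j$ the unique rightmost column of height $r$; (iii) the class $[\mathscr E]$ has height $r$.

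For the splitting map, given $[\mathscr E]\in {}^rH^{t+1}_j$, take its complete reduced representative of height $r$ and split into $\mathscr D:=(C_1,\dots,C_j)$ and $\mathscr D':=(C_j,C_{j+1},\dots,C_{t+1})$ (with the latter reindexed). Then $\mathscr D$ is a diagram of order $j$ satisfying the boundary conditions in which $C_j$ is the rightmost highest column of height $r$, giving $[\mathscr D]\in {}^rH^j_j$. For $\mathscr D'$: $\height C'_1=r$ while $C'_2,\dots,C'_{t+2-j}$ all have height $<r$ (because $C_j$ is the rightmost highest column of $\mathscr E$), so $C'_1$ is the unique column of maximal height in $\mathscr D'$; a single half-domino removal then produces a representative of height $r-1$, placing $[\mathscr D']\in {}^{\le r-1}H^{t+2-j}_1$.

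The main obstacle is the parity mismatch in the height-adjustment step: since $(*)$ forces $r$ and $r'$ to have opposite parities, matching heights cannot be achieved by adjoining pairs of identical rows alone, and one must first pass to the complete reduced representative of height $r'+1$ (corresponding to half-domino adjunction followed by completion, as in the remark of \ref{2.3.2}). Well-definedness on equivalence classes and mutual inversion of the two maps then follow from a routine check that the elementary operations of \ref{2.3} commute appropriately with merging and splitting, once the canonical complete reduced representatives have been fixed.
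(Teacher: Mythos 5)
Your construction is the same as the paper's: merge the minimal complete reduced representatives at $C_j$ after adjusting the height of the right factor (your passage to the second complete reduced representative, of height $r'+1$, is exactly the paper's half-domino adjunction followed by completion), and invert by splitting at the rightmost highest column $C_j$. The parity bookkeeping, the boundary-condition and completeness checks, and the splitting argument (including the half-domino removal that places the right-hand piece in ${}^{\leq r-1}H_1^{t+2-j}$) all match the paper.

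The one substantive omission is the verification of the \emph{class heights}: your check (iii) that $[\mathscr E]$ has height exactly $r$, and, in the splitting direction, the unjustified assertion that the left-hand piece lies in ${}^{r}H^j_j$ rather than having class height $r-1$. Recall that the height of a class is that of its deplete representative and that a class has two complete reduced representatives of consecutive heights (\ref{2.4}(2)); exhibiting a complete reduced diagram of height $r$ only bounds the class height by $r$ from above. This step is not routine and is where the paper does real work: the left factor $\mathscr D_-$, being the minimal complete reduced representative of a class in ${}^rH^j_j$, is reduced, hence by \ref{2.4}(1) has a column of height $\leq 1$, necessarily among $C_1,\dots,C_{j-1}$; this column survives the merge, so by \ref{2.4}(3) the second complete reduced representative of $[\mathscr E]$ has height $r+1$ rather than $r-1$, whence $[\mathscr E]\in{}^rH^{t+1}_j$. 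Dually, minimality of the height of $\mathscr E$ forces some $C_i$ with $i<j$ to have height $\leq 1$, which is what makes the split-off left piece reduced and of minimal height $r$ in its class. Without this argument the maps are not shown to land in the asserted sets, since a complete reduced merged diagram of height $r$ could a priori represent a class of height $r-1$. Everything else in your plan is sound and follows the paper's route.
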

\begin {proof}
We will prove the first statement. The proof of the second one is very similar and so we omit it.

Let $[\mathscr D_-] \in \ ^rH^j_j$ and $[\mathscr D_+] \in  \ ^{\leq (r-1)}H^{t+2-j}_1$, where $\mathscr D_-,\, \mathscr D_+$ are complete reduced diagrams of minimal height in their equivalence class.  Then the $j^{th}$ column of $\mathscr D_- $ has height $r$, whilst the first column of $\mathscr D_+$ has odd height $\leq (r-1)$. By shifting indices we regard the latter as its $j^{th}$ column. It is its strongly extremal column, hence we may add a half-domino to its top to make it of even height $\leq r$.  Moreover one may add an even number of full rows to the bottom of $\mathscr D_+$ to make it of height $r$.  Then since these two diagrams satisfy the boundary conditions they may be merged at their $j^{th}$ column of common height $r$ to give a diagram $\mathscr D$, which is complete since $\mathscr D_\pm$ are complete. Again $\mathscr D$ is reduced because $\mathscr D_-$ is reduced.  Finally $C_j$ is the rightmost column of height $r$. In order to show that $[\mathscr D]\in\ ^rH^{t+1}_j$, it remains to check that $\mathscr D$ is of minimal height in its equivalence class. If we add a box to $C_j$ to obtain $\mathscr D^+$, then its completion $\widehat{\mathscr D^+}$ obtains by exactly adding vertical dominoes on columns to the right of $C_j$.  Thus it is reduced because $\mathscr D_-$ is reduced and $\operatorname{ht}\,\widehat{\mathscr D^+}=r+1$.  Hence by \ref{2.4}$(3)$ $\mathscr D$ is of minimal height in its equivalence class.

The map $([\mathscr D_-], [\mathscr D_+])\rightarrow [\mathscr D]$ is clearly injective. We will show that it is also surjective, thus an isomorphism of sets $^rH^j_j \times \ ^{\leq (r-1)}H_1^{t+2-j}  \iso \ ^rH^{t+1}_j$.

Take $[\mathscr D] \in\ ^rH^{t+1}_j$, where $\mathscr D$ is the complete reduced diagram of height $r$. Recall that $r$ is even; by definition the rightmost column of maximal height $r$ of $\mathscr D$ is $C_j$. By \ref{2.4}$(4)$, $\operatorname{ht}\, C_1=r$. By \ref{2.4}$(3)$, at least one of the columns $C_1, C_2, \ldots, C_{j-1}$ has height $\le 1$.

The diagram $\mathscr D_-$ consisting of the first $j$ columns of $\mathscr D$ satisfies the left boundary conditions since $\mathscr D$ does and it also satisfies the right boundary conditions since $C_j$ is the rightmost column in
$\mathscr D_-$ and a fortiori $C_j$ is also a column of maximal
height in  $\mathscr D_-$. On the other hand, if one can adjoin a domino to $\mathscr D_-$, one can also adjoin the same domino to $\mathscr D$; the latter is impossible, since $\mathscr D$ is complete by assumption. Hence $\mathscr D_-$ is complete. By \ref{2.4}$(1)$, $\mathscr D_-$ is also reduced, since it has a column of height $\le 1$. The second complete reduced diagram in $[\mathscr D_-]$, namely $\widehat{\mathscr D_-^+}$, obtains from $\mathscr D_-$ by adding a box at its rightmost column $C_j$; hence $\operatorname{ht}\,\widehat{\mathscr D_-^+}=\operatorname{ht}\, \mathscr D_-+1$. We conclude that $[\mathscr D_-]\in\ ^rH_j^j$.

Let $\mathscr D_+$ be the diagram obtained from $\mathscr D$ by the removal of the first $j-1$ columns.  It satisfies the right boundary conditions since $\mathscr D$ does. It satisfies the left boundary condition since $C_j$ is the
leftmost column in $\mathscr D_+$ and a fortiori $C_j$ is also a
column of maximal height in  $\mathscr D_+$. As in the case of $\mathscr D_-$, it is complete. The column $C_j$ is the only column in $\mathscr D_+$ of maximal height $r$. By \ref{2.4}$(4)$, $\operatorname{ht}\, C_{t+1}=r-1$. Then we may remove a single block from $C_j$ to obtain a diagram $\mathscr D_+^-$ with at least two columns of height $r-1$, namely $C_j$ and $C_{t+1}$. This removal of a single block does not change the property of $\mathscr D_+$ being complete. If $\mathscr D_+^-$ is reduced, then it is the complete reduced diagram of minimal height $r-1$ in its equivalence class. If not, a complete reduced diagram of odd height $<r-1$ is obtained by cancelling an even number of bottom rows. It is of minimal height in its equivalence class, since its strongly extremal column is its first column.  We conclude that $[\mathscr D_+] \in \ ^{\leq (r-1)}H^{t+2-j}_1$.

Finally, observe that $\mathscr D$ is recovered by merging $\mathscr D_-$ and $\mathscr D_+$ at their
common column $C_j$. This completes the proof of surjectivity.
\end {proof}

\textbf{Remark.}  
The diagram  $\mathscr D_+$ in the penultimate paragraph will generally fail to be reduced.  For example take $\mathscr D=(4,3,2,1,4,2,3) \in \ ^4H^7_5$.  Then $\mathscr D_+ = (4,2,3)$ is complete but not reduced.

\subsection {}\label{3.3}
In the previous lemma we described $^rH^{t+1}_j$, for $1<j<t+1$. Below we will describe $^rH^{t+1}_{t+1}$ for $r$ even and $^rH^{t+1}_1$ for $r$ odd.
Here we identify $H^0,H^1,H^1_1$ with the (equivalence class of the) empty diagram.

\begin {lemma} Let $t\in \mathbb N$. Adjoining a single column (and possibly a single block) gives bijections
\begin{enumerate}
\item $^rH^t \sqcup \ ^{r-1}H^t \iso \ ^rH^{t+1}_1$, for $r$ odd.
\item $^rH^t \sqcup \ ^{r-1}H^t \iso \ ^{r}H^{t+1}_{t+1}$, for $r \geq 2$ and even.
\end{enumerate}
\end {lemma}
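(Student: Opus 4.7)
My plan is to prove item (1) directly by constructing explicit mutually inverse maps, and to deduce item (2) by applying the duality involution $*$ of Section~\ref{2.5}.

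For the forward map in item (1), fix $r$ odd and a class $[\mathscr D']\in{}^rH^t\sqcup{}^{r-1}H^t$. If $[\mathscr D']\in{}^rH^t$, I take the complete reduced representative of minimal height $r$ guaranteed by~\ref{2.4}(2) and prepend to it a new leftmost column $C_1$ of height $r$, producing a diagram $\mathscr D$ of order $t+1$. If $[\mathscr D']\in{}^{r-1}H^t$, I instead take the complete reduced representative of height $r$ in $[\mathscr D']$, namely the \emph{taller} of the two reduced representatives of the class, obtained from the deplete representative (of height $r-1$) by half-domino adjunction on its strongly extremal column followed by completion; this added block is the ``single block'' of the lemma statement. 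I then prepend a column of height $r$ as before. In either case $C_1$ has height $r=\operatorname{ht}\mathscr D$, and since $r$ is odd, $C_1$ is the leftmost column of maximal height in $\mathscr D$, hence its strongly extremal column.

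Two points must be verified: (i) $\mathscr D$ satisfies the boundary conditions, and (ii) the class $[\mathscr D]$ has deplete height exactly $r$. For (i), $C_1$ is the unique left extremal column of $\mathscr D$ (every other column has height at most $r=\operatorname{ht}C_1$) and it has maximal height, so the left boundary holds, while the right extremal columns of $\mathscr D$ coincide with those of $\mathscr D'$, so the right boundary is inherited. Point (ii) is the main technical obstacle: I will apply~\ref{2.4}(3) to the complete reduced diagram $\mathscr D$, whose strongly extremal column is $C_1$; since there are no columns to the left of $C_1$, the dichotomy of~\ref{2.4}(3) forces $\operatorname{ht}\widehat{\mathscr D^+}=r+1$ rather than $r-1$, whence by~\ref{2.4}(2) the deplete height of $[\mathscr D]$ is~$r$.

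For the inverse, given $[\mathscr D]\in{}^rH^{t+1}_1$ I pick its complete reduced representative of minimal height $r$, so $\operatorname{ht}C_1=r$ (strong extremality) and, by~\ref{2.4}(4)(b), $\operatorname{ht}C_{t+1}=r$ too. Removing $C_1$ produces a diagram $\mathscr D'$ of order $t$ whose last column has height $r$. Arguing exactly as in the surjectivity part of the lemma of~\ref{3.2}, with $C_1$ playing the role of the splitting column $C_j$ there --- and being careful to check that removing $C_1$ does not create new admissible dominos, a potential hazard since $C_1$ of height $r$ could have served as a right neighbour at heights up to $r$ --- I verify that $\mathscr D'$ is reduced, complete, and satisfies both boundary conditions. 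Whether $[\mathscr D']$ lies in ${}^rH^t$ or in ${}^{r-1}H^t$ is detected by whether $\mathscr D'$ is the shorter or the taller of the two complete reduced representatives of its class, which matches the dichotomy of the forward construction. Mutual inverse is then direct. Finally, item (2) follows from item (1) by the duality involution $*$ of~\ref{2.5}: $*$ gives a bijection $H^{t+1}_1\leftrightarrow H^{t+1}_{t+1}$ that raises the height of each class by $1$, and on $H^t$ it sends ${}^sH^t$ to ${}^{s+1}H^t$; combining these with item (1) for $r$ odd yields item (2) for $r'=r+1$ even.
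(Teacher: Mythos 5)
Your treatment of item (1) is essentially the paper's own argument: the same forward maps (prepend a column of height $r$ to the minimal complete reduced representative, respectively to the taller reduced representative $\widehat{\mathscr D^+}$ of a class of height $r-1$), the same verification that the result is complete, reduced and of minimal height via \ref{2.4}, and the same surjectivity argument by deleting $C_1$ and observing that the resulting class has height $r$ or $r-1$. Your invocation of \ref{2.4}(3) is slightly garbled (for odd height the dichotomy there is governed by the columns to the \emph{right} of the strongly extremal column, not the left), but the fact you actually use --- that there is nothing to the left of $C_1$, so $\widehat{(C\sqcup\mathscr D)^+}=(C\sqcup\mathscr D)^+$ has height $r+1$ --- is exactly the paper's justification, so this is only a misattribution.

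The genuine gap is in your derivation of item (2). You assert that the involution $*$ of \ref{2.5} ``raises the height of each class by $1$'' on $H^{t+1}_1$ and ``sends ${}^sH^t$ to ${}^{s+1}H^t$.'' The second claim is impossible: $*$ is an involution, so if it shifted every height up by $1$ its square would shift heights up by $2$ rather than being the identity; concretely $[(0,\dots,0)]^*=[(1,\dots,1)]$ has height $1$, while $[(1,\dots,1)]^*=[(2,\dots,2)]=[(0,\dots,0)]$ has height $0$. The first claim also fails: take $[(1,1,1)]\in{}^1H^3_1$; its dual is $[(2,2,2)]$, which reduces by cancelling the two equal rows to the empty class $[(0,0,0)]\in{}^0H^3_3$, of height $0$ rather than $2$ (and indeed the other element $[(1,0,1)]$ of ${}^1H^3_1$ dualizes to $[(2,1,2)]\in{}^2H^3_3$, so $*$ scatters a single stratum ${}^rH^{t+1}_1$ across several height strata of $H^{t+1}_{t+1}$). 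Adding a bottom row raises the height of a \emph{diagram} by $1$, but not the height of its \emph{class}, which is the height of the deplete representative and can drop after cancellation of equal rows. So duality does not intertwine the two height-graded bijections of the lemma, and item (2) is not a formal consequence of item (1) in the way you propose. The paper instead proves (2) by the mirror of the argument for (1): for $r\geq 2$ even one appends a column of height $r$ on the \emph{right} (using \ref{2.4}(4)(a) in place of (4)(b)), and removes the last column for surjectivity; you would need to write out that parallel argument.
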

\begin {proof}  We will prove (1). The proof of (2) is very similar and so we omit it.

Take $[\mathscr D] \in \ ^rH^t$, where $\mathscr D$ is a complete reduced diagram of minimal height in its equivalence class.   By \ref {2.4}$(4)$ the first column of $\mathscr D$ has height $\geq r-1$.  Let $C\sqcup \mathscr D$ be the diagram of order $t+1$ obtained by adjoining a column of height $r$ on the left of $\mathscr D$.  It satisfies the right boundary condition since $\mathscr D$ does and the overall height has not been increased. It satisfies the left boundary condition since its first column has the maximal height $r$.  It is complete since $\mathscr D$ is complete and since its second column has height $\geq r-1$. Since its strongly extremal column is its first column $C$, $\widehat{(C\sqcup \mathscr D)^+}=(C\sqcup \mathscr D)^+$; hence $[C\sqcup\mathscr D] \in \ ^rH^{t+1}_1$, since $C\sqcup\mathscr D$ is of minimal height in its equivalence class.  The resulting map $^rH^t \rightarrow \ ^rH^{t+1}_1$ is clearly injective.

Let $[\mathscr D] \in \ ^{r-1}H^t$, where $\mathscr D$ is a complete reduced diagram of minimal height in its equivalence class.  Then $\widehat{\mathscr D^+}$, is also reduced and has height $r$. We proceed as before, namely we add a column of height $r$ on the left of $\widehat{\mathscr D^+}$; the resulting diagram $C\sqcup \widehat{\mathscr D^+}$ satisfies the boundary conditions and is complete and reduced. It is of height $r$ with strongly extremal column $C$, hence it is of minimal height in its equivalence class. Clearly the corresponding map $^{r-1}H^t \rightarrow \ ^rH^{t+1}_1$ is injective and has a disjoint image to the first map.

For surjectivity take $[\hat{\mathscr D}] \in \ ^rH^{t+1}_1$, where $\hat{\mathscr D}$ is complete reduced of minimal height in $[\hat{\mathscr D}]$.  Since $\hat{\mathscr D}$ is complete and its first column is of odd height $r$, it follows that the second column of $\hat{\mathscr D}$ has height $\geq r-1$. Then the diagram $\mathscr D$ defined by removing the first column of $\hat{\mathscr D}$ satisfies the boundary conditions and is complete.  It is reduced because $\hat{\mathscr D}$ is reduced. Its height is equal to $r$, since by \ref{2.4}$(4)$ $\operatorname{ht}\, C_{t+1}=r$. However it need not be of minimal height in its equivalence class.  This means that the height of $[\mathscr D]$ is either $r$ or $r-1$. This proves surjectivity.
\end {proof}

\noindent \textbf{Example.} Take $\hat{\mathscr D}=(3,2,1,3,2,3)$ which is complete reduced and of minimal height in $[\hat{\mathscr D}]$.  Then $\mathscr D =(2,1,3,2,3)$ is complete and reduced but not of minimal height in its equivalence class. Indeed, $(2,1,2,0,1)\in [\mathscr D]$ is complete and reduced of height $\operatorname{ht}\, \mathscr D-1$.

\section {Proof of the Zelikson conjecture}\label{4}
\subsection{}\label{4.1}
Let $^r \mathscr C^s_j$ denote the cardinality of $^rH^s_j$ and set $^r\mathscr C ^s =\sum\limits_{j=1}^{t+1} \ ^r\mathscr C ^s_j$.  We define the Catalan polynomial of order $t+1,\, t \in \mathbb N$ to be
$$\mathscr C _{t+1}(q):= \sum_{r=0}^\infty   q^r  \ ^r\mathscr C ^{t+1}.$$

Recall the notation and hypotheses of \ref {3.1}. Below we prove the Zelikson conjecture:

\begin {thm}  For all $t \in \mathbb N$ one has $|H^{t+1}|=\mathscr C _{t+1}(1)=\mathcal C_{t+1}$.  Moreover for all $j$ with $1\le j\le t+1$,
$$|H^{t+1}_j|= \mathcal C_{j-1}\mathcal C_{t-j+1}.$$
\end {thm}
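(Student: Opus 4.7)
The approach is to establish the refined statement $|H^{t+1}_j|=\mathcal C_{j-1}\mathcal C_{t+1-j}$ for all $1\le j\le t+1$ by induction on $t$; summing over $j$ and invoking the standard Catalan recursion $\mathcal C_{t+1}=\sum_{k=0}^{t}\mathcal C_k\mathcal C_{t-k}$ then yields $|H^{t+1}|=\mathcal C_{t+1}$. The base cases $t=0,1$ are verified by hand.

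For the inductive step I first dispatch the boundary strongly extremal columns $j=1$ and $j=t+1$ using Lemma~\ref{3.3}. By the vanishing $(*)$ of \ref{3.2}, the only contributions to $|H^{t+1}_1|$ come from odd $r$, and summing Lemma~\ref{3.3}$(1)$ over such $r$ gives
$$|H^{t+1}_1|=\sum_{r\text{ odd}}\bigl(|^{r}H^t|+|^{r-1}H^t|\bigr)=\sum_{r\ge 0}|^{r}H^t|=|H^t|.$$
By a parallel calculation (where the $r=0$ empty-class contribution to $H^{t+1}_{t+1}$ offsets the $r\ge 2$ restriction in Lemma~\ref{3.3}$(2)$), $|H^{t+1}_{t+1}|=|H^t|$. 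The induction hypothesis gives $|H^t|=\mathcal C_t=\mathcal C_0\mathcal C_t$, confirming the formula at $j=1$ and $j=t+1$.

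The core of the proof is the interior case $1<j<t+1$, where the goal is the factorisation
$$|H^{t+1}_j|=|H^j_j|\cdot|H^{t+2-j}_1|.$$
Setting $a_r:=|^{r}H^j_j|$ (nonzero only for $r$ even by $(*)$) and $b_s:=|^{s}H^{t+2-j}_1|$ (nonzero only for $s$ odd), Lemma~\ref{3.2} yields
$$|H^{t+1}_j|=\sum_{r\text{ even}}a_r\sum_{s\le r-1}b_s\;+\;\sum_{r\text{ odd}}b_r\sum_{s\le r-1}a_s.$$
Imposing the vanishing pattern, the first double sum enumerates pairs $(r,s)$ with $r$ even, $s$ odd, $s<r$, while the second enumerates pairs with $r$ odd, $s$ even, $s<r$; after relabelling, the latter is exactly the set of pairs with $r$ even, $s$ odd, $r<s$. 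Since parity forces $r\ne s$, these two families tile $\{r\text{ even}\}\times\{s\text{ odd}\}$ exactly once, and the total collapses to $\bigl(\sum_r a_r\bigr)\bigl(\sum_s b_s\bigr)=|H^j_j|\cdot|H^{t+2-j}_1|$. The boundary formulas of the previous paragraph, applied at orders $j$ and $t+2-j$ (both strictly less than $t+1$ when $1<j<t+1$, hence covered by the inductive hypothesis), give $|H^j_j|=|H^{j-1}|=\mathcal C_{j-1}$ and $|H^{t+2-j}_1|=|H^{t+1-j}|=\mathcal C_{t+1-j}$, completing the induction.

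The principal obstacle is the bookkeeping in the double sum: one must verify that the two cases of Lemma~\ref{3.2}, combined with the parity vanishing $(*)$, exactly cover the pairs of opposite parity without double-counting, and one must handle correctly the edge contributions (the $r=0$ empty-class term and the $r\ge 2$ restriction in Lemma~\ref{3.3}$(2)$). A secondary concern is ensuring the induction is non-circular: the interior factorisation at order $t+1$ invokes the boundary formula only at the strictly smaller orders $j$ and $t+2-j$, both of which fall under the inductive hypothesis as long as $1<j<t+1$.
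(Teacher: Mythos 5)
Your proof is correct, and it rests on exactly the same two structural facts as the paper's --- the merging bijection of Lemma~\ref{3.2} for interior $j$ and the column-adjunction bijection of Lemma~\ref{3.3} for $j=1,t+1$ --- together with the same parity-tiling observation that the two cases of Lemma~\ref{3.2} cover each pair (even $r$, odd $s$) exactly once. What differs is the organisation. The paper introduces the Catalan polynomials $\mathscr C_{t+1}(q)$, extends the interior recursions $(3')$, $(4')$ to the boundary indices $j=1,t+1$ (which forces an awkward correction term $-1$ at $j=t+1$, $r=1$ in its equation $(5)$), and obtains the total count first by a global symmetry substitution $j\mapsto t+2-j$ combined with an interchange of the dummy indices $r,s$; the refined formula $|H^{t+1}_j|=\mathcal C_{j-1}\mathcal C_{t+1-j}$ is then extracted afterwards. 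You instead take the refined formula as the inductive statement, keep the boundary cases strictly separate (so no correction term arises), and prove the factorisation $|H^{t+1}_j|=|H^j_j|\cdot|H^{t+2-j}_1|$ locally for each fixed $j$ by the tiling argument, reducing to smaller orders via $|H^j_j|=|H^{j-1}|$ and $|H^{t+2-j}_1|=|H^{t+1-j}|$. The paper's route buys the graded refinement by height (the polynomial $\mathscr C_{t+1}(q)$, used later in Section~\ref{4.2} to connect with planted trees); your route buys a cleaner, self-contained induction for the counting statement itself. Both are sound.
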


\begin {proof}  One has $^0H^{t+1}= \ ^0H^{t+1}_{t+1}$; in particular $^0\mathscr C ^{t+1}= \ ^0\mathscr C ^{t+1}_{t+1}=1$
and  $^0\mathscr C ^{t+1}_j=0$, for all $j\leq t$. Note also that $|H^0|=1$ and $^0\mathscr C ^0=1$.

By Lemma \ref {3.3} we have that for all $k>s$
$$\  ^s\mathscr C ^k_1=\left\{\begin{array}{cc} \ ^{s-1}\mathscr C ^{k-1}+\ ^s\mathscr C ^{k-1}, &s \quad \textrm{odd},\\
0, & s \quad \textrm{even}.
\end{array}\right.\eqno{(1)}$$
and
$$\  ^s\mathscr C ^k_k=\left\{\begin{array}{cc} \ ^s\mathscr C ^{k-1}+\ ^{s-1}\mathscr C ^{k-1}, &s \quad \textrm{even},\\
0, & s \quad \textrm{odd}.
\end{array}\right.\eqno{(2)}$$

By Lemma \ref {3.2}, we have that for $r\ge 2$ and even and $2\le j\le t$

$$^r\mathscr C ^{t+1}_j=\sum_{s=0}^{r-1} \ ^r\mathscr C ^j_j \  ^s\mathscr C ^{t+2-j}_1,\eqno{(3)}$$
whereas for $r\ge 1$ and odd and $2\le j\le t$
$$^r\mathscr C ^{t+1}_j=\sum_{s=0}^{r-1} \ ^s\mathscr C ^j_j \  ^r\mathscr C ^{t+2-j}_1.\eqno{(4)}$$
Then by $(1)$ and $(2)$ equation $(3)$ becomes
$$^r\mathscr C ^{t+1}_j=\sum_{s=0}^{r-1} ( ^r\mathscr C ^{j-1}+ \ ^{r-1}\mathscr C ^{j-1}) \  ^s\mathscr C ^{t+1-j}, \eqno{(3')}$$
where $r\ge 2$ is even and $2\le j\le t$, and equation $(4)$ becomes
$$^r\mathscr C ^{t+1}_j=\sum_{s=0}^{r-1} \ ^s\mathscr C ^{j-1}( ^r\mathscr C ^{t+1-j}+ \ ^{r-1}\mathscr C ^{t+1-j}),\eqno{(4')}$$
where $r\ge 1$ is odd and $2\le j\le t$.
Note that since $^s\mathscr C ^0=0$, unless $s=0$, and $^0\mathscr C ^0=1$ equations $(3')$ and $(4')$ for $j=1,\, t+1$ coincide with $(1)$ and $(2)$,  unless $j=t+1$ and $r=1$, hence $(3')$ and $(4')$ are still valid for $j=1,\, t+1$, unless $j=t+1$ and $r=1$, in which case $(4')$ gives $1$, whereas $^1\mathscr C _{t+1}^{t+1}=0$.
The right hand side of $(4')$ is transformed into the right hand side of $(3')$ by the substitution $j\mapsto t+2-j$.
Thus summing over $j$ we deduce that for all $r\ge 1$
$$^r\mathscr C ^{t+1}=\left\{\begin{array}{cc} \sum\limits_{j=1}^{t+1} \ \sum\limits_{s=0}^{r-1} ( ^r\mathscr C ^{j-1}+ \ ^{r-1}\mathscr C ^{j-1}) \  ^s\mathscr C ^{t+1-j}, &r>1.\\
\sum\limits_{j=1}^{t+1} \ \sum\limits_{s=0}^{r-1} ( ^r\mathscr C ^{j-1}+ \ ^{r-1}\mathscr C ^{j-1}) \  ^s\mathscr C ^{t+1-j}-1, & r=1.\end{array}\right. \eqno{(5)}$$

Summing over $r$ we obtain
$$\mathscr C _{t+1}(1)=\sum\limits_{r=0}^t\ ^r\mathscr C ^{t+1}=\ ^0\mathscr C ^{t+1}+\sum\limits_{r=1}^t\ ^r\mathscr C ^{t+1}.$$
Since $\ ^0\mathscr C ^{t+1}=1$ and $^r\mathscr C ^{t+1}$ is given by $(5)$, one has

$$\mathscr C _{t+1}(1)=\sum_{r=1}^t \sum_{j=1}^{t+1} \sum_{s=0}^{r-1}( ^r\mathscr C ^{j-1}+ \ ^{r-1}\mathscr C ^{j-1}) \  ^s\mathscr C ^{t+1-j}. \eqno{(6)}$$

The double sum of $r,s$ requires that $r>s$.  On the other hand if we make the substitution $j \mapsto t+2-j$ in the second summand of the right hand side of $(6)$ and interchange the dummy indices $r,s$, then we recover the terms for which $r\leq s$.  This then gives
$$\mathscr C _{t+1}(1)=\sum\limits_{r=0}^t\sum\limits_{s=0}^t\sum\limits_{j=1}^{t+1}\ ^r\mathscr C ^{j-1}\  ^s\mathscr C ^{t+1-j}= \sum_{j=1}^{t+1} \mathscr C _{j-1}(1)\mathscr C _{t-j+1}(1). \eqno{(7)}$$

Finally since $\mathscr C _0(1)=1$, the first assertion of the lemma results by comparison of $(7)$ with the recursive relation for the $\mathcal C _{t+1},\,t \in \mathbb N$ given in \ref {1.1}.

For the second part of the lemma, let us first assume that $j\ne 1,\, t+1$.  Then from  $(3')$ and $(4')$ one checks that in the sum
$\sum\limits_{r=0}^t \ ^r\mathscr C ^{t+1}_j$ every factor $^u\mathscr C ^{j-1} \ ^v\mathscr C ^{t+1-j},\,u, v \in \mathbb N$ occurs exactly once. It hence equals $\mathscr C _{j-1}(1)\mathscr C _{t+1-j}(1)$.  On the other hand the sum itself equals $|H^{t+1}_j|$.

Finally by Lemma \ref{3.3}, we have $$|H_1^{t+1}| = \sum\limits_{r=1}^t\ ^r\mathscr C ^{t+1}_1 = \sum\limits_{r=0}^{t-1}\ ^r\mathscr C ^t=\mathscr C _t(1) = \mathscr C _0(1)\mathscr C _t(1),$$
and similarly for $|H_{t+1}^{t+1}|.$
\end {proof}

\subsection {Catalan Polynomials}\label{4.2}
As we saw in the paragraph above, the Catalan polynomial $\mathscr C _{t+1}$ is a polynomial of degree $t$. The coefficient of $q^r$ in $\mathscr C _{t+1}$ is equal to $1$ for $r=0$ and for $r\ge 1$ it is given by equation \ref{4.1} $(5)$. In particular, $\mathscr C _{t+1}$ is a monic polynomial of degree $t$ with constant coefficient also equal to $1$.

For any $r\in \mathbb N$, set $^{\le r}\mathscr C^{t+1}=\sum\limits_{k=0}^r\ ^k\mathscr C^{t+1}$. By a similar computation as in the proof of theorem \ref{4.1}, we may compute these numbers to be
$$ ^{\le r}\mathscr C^{t+1}=\sum\limits_{k=0}^r\ ^k\mathscr C^{t+1}=\sum\limits_{j=1}^{t+1}\ ^{\le r}\mathscr C^{j-1}\ ^{\le r-1}\mathscr C^{t+1-j}. \eqno{(8)}
$$

Then the number of diagrams of order $t+1$ having height $\le r$ is equal to the number of planted trees having $t+2$ nodes and height $\le r+3$. Indeed, if we denote by $A_{t+2, r+2}$ the latter, by substituting in the recurrence relation in equation (3) of \cite{BKR}, we obtain

$$A_{t+2, r+2}=\sum\limits_{k=1}^{t+1}A_{k, r+2}A_{t+2-k, r+1}. \eqno{(9)}$$
Moreover, as we may see in Table I of \cite{BKR}, for any $t\in \mathbb N$, $A_{t+2, 2}=1=^{\le 0}\mathscr C^{t+1}$. Hence equations $(8)$ and $(9)$ coincide. One may find in oeis.org (sequence A080934) more objects enumerated by the $^{\le r} \mathscr C^{t+1}$. The coefficient of $q^r$ in the Catalan polynomial $\mathcal C_{t+1}$ is equal to the number of Dyck paths of semilength $t+2$ and height $r+2$, as seen in oeis.org A080936.

%
%
%
%
%

\section{The graph of links}\label{5}

\subsection{Labelling of diagrams} \label{5.1}
A labelling of diagrams is an insertion of an integer into each block. In \cite[Section 3.1]{J}, a labelling on diagrams of order $t+1$ was defined. Let $\mathscr D$ be a diagram of order $t+1$ and denote by $B(i, j)$ the block lying in $C_i$ on the $j$th row. This in particular means that $\operatorname{ht}\, C_i\ge j$. We place a label $b(i, j)$ to all blocks $B(i, j)$ of $\mathscr D$, except for the leftmost (resp. rightmost) block $B(i, j)$ when $j$ is even (resp. odd), which is left blank. Labels are defined inductively as follows: first, we set $b(i, 1)=i$. Let $B(i, j+1)$ be a block on the $(j+1)$-row which is not leftmost (resp. rightmost) if $j$ is odd (resp. even) (and hence it is labelled). Then we insert in $B(i, j+1)$ the number $b(k, j)$, where $k$ is such that $C_k$ is the left (resp. right) neighbour of $C_i$ at height $j$, when $j$ is odd (resp. even). Note that the labelling of a diagram $\mathscr D$ is unique; however, we will refer to a labelled diagram as a tableau. A tableau and its equivalence class are denoted by $\mathscr T,\, [\mathscr T]$ respectively.\\

\noindent {\bf Remark.} Note that the label of a block in a diagram may change after adjunction or removal of dominoes (see the example below). Yet by \cite[Lemma 3.1]{J}, a complete tableau is {\it well-numbered}, that is $b(i, j)=i$ if $j$ is odd and $i\ne t+1$ and $b(i, j)=i-1$ if $j$ is even and $i\ne 1$ (for $j$ odd (resp. even) and $i=t+1$ (resp. $i=1$) the block $B(i, j)$ is empty).\\

\noindent {\bf Example.} As we saw in \ref{2.3.1}, the diagrams
\[\ytableausetup{notabloids}
\begin{ytableau}
*(white) & \none & \none & *(white)\\
*(white) & *(white) & \none & *(white)
\end{ytableau}
\qquad
\ytableausetup{notabloids}
\begin{ytableau}
*(white) & \none & *(white) & *(white)\\
*(white) & *(white) & *(white) & *(white)
\end{ytableau}\]
are equivalent as the second one obtains from the first by adding a left even domino. The corresponding tableaux are
\[\ytableausetup{notabloids}
\begin{ytableau}
*(white) & \none & \none & 2\\
1 & 2 & \none & *(white)
\end{ytableau}
\quad
\ytableausetup{notabloids}
\begin{ytableau}
*(white) & \none & 2 & 3\\
1 & 2 & 3 & *(white)
\end{ytableau}\]
In particular $B(4, 2)$ changes label. Note that the second diagram is complete and, as such, it is well-numbered.

\subsection{Inequalities}\label{5.2}
Let $c_1, c_2, \dots, c_t$ be indeterminates. In \cite[Section 3.2]{J}, a partial order on the $c_i,\, 1\le i\le t$ was assigned to each tableau as follows:
\begin{itemize}
\item Suppose that $j$ is even, then $c_{b(i, j)}\le c_{b(\ell, j-1)}$, with $\ell=k, k+1, \dots, i+1$, where $C_k$ is the left neighbour of $C_i$ at level $j-1$.
\item Suppose that $j$ is odd, then $c_{b(i, j)}\le c_{b(\ell, j-1)}$, with $\ell=i+1, i+2, \dots, k$, where $C_k$ is the right neighbour of $C_i$ at level $j-1$. (Here we have added a zeroth row at the bottom of the tableau with labelling $b(i, 0)=i-1$ for $i=2, 3, \dots, t+1$, with $B(1, 0)$ left blank as the rightmost block of a row at even height).
\end{itemize}

Two tableaux in the same equivalence class might determine distinct partial orders. However, by \cite[Lemma 5.3]{J}, two complete tableaux in the same equivalence class determine the same partial order. \\

\noindent {\bf Definition.} To each equivalence class we assign the partial order of its complete tableaux.\\

\noindent {\bf Example.} The equivalent tableaux below
\[\ytableausetup{notabloids}
\begin{ytableau}
*(white) & \none & \none & 2\\
1 & 2 & \none & *(white)
\end{ytableau}
\quad
\ytableausetup{notabloids}
\begin{ytableau}
*(white) & \none & 2 & 3\\
1 & 2 & 3 & *(white)
\end{ytableau}\]
determine the partial orders $c_2\le c_1, c_3$ and $c_2\le c_1$ respectively. We assign to their equivalence class the partial order defined by the complete tableau, namely $c_2\le c_1$.

\subsection{Functions}\label{5.3}
Let $\mathscr T$ be a complete tableau of order $t+1$. In \cite[4.4]{J}, a function $f_\mathscr{T}$ separately linear in the two sets of variables $c_i,\, i \in T$ and $r^j,\, j \in \hat{T}$ was associated with $\mathscr T$ as follows: let $R_u$ be the row of $\mathscr T$ at height $u$. Let $C_{i_1},\, C_{i_2}, \dots, C_{i_k}$ be the columns of $\mathscr T$ of height $\ge u$ and set $$f_{R_u}=\left\{\begin{array}{cc}\sum\limits_{j=1}^{k-1}c_{i_j}(r^{i_j}-r^{i_{j+1}}),& \textrm{if}\quad u \quad \textrm{is odd}.\\
-\sum\limits_{i=2}^kc_{i_j-1}(r^{i_{j-1}}-r^{i_j}),& \textrm{if}\quad u \quad \textrm{is even}.
\end{array}\right.$$
Then set $f_\mathscr{T}=\sum\limits_{u}f_{R_u}$.

By \cite[Lemma 4.5]{J}, $f_\mathscr{T}$ is independent of the choice of $\mathscr T$ in its equivalence class. We therefore write $f_{[\mathscr T]}$ instead of $f_\mathscr{T}$. In \cite [4.7]{J} it was shown that these functions are pairwise distinct, when the $c_i,\, 1\le i\le t$ are viewed as above as indeterminates, or equivalently are ``in general position''. However, we have the following stronger result:

\begin{lemma}
The elements $f_{[\mathscr T]},\, [\mathscr T]\in H^{t+1}$ (as linear functions on the $r^i,\, 1\le i\le t+1$) are pairwise distinct given that the $c_i,\, 1\le i\le t$ take non-zero values and are pairwise distinct.
\end{lemma}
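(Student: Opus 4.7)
The plan is to leverage the result of \cite[4.7]{J} that the functions $f_{[\mathscr T]}$ are pairwise distinct in the polynomial ring $\mathbb{Z}[c_1,\ldots,c_t,r^1,\ldots,r^{t+1}]$, and to show that the difference $f_{[\mathscr T]}-f_{[\mathscr T']}$ does not vanish as a linear function of $r$ under any specialisation $c\mapsto\alpha$ with $\alpha$ having pairwise distinct non-zero entries. Write $f_{[\mathscr T]}=\sum_{j=1}^{t+1}\phi_j([\mathscr T])(c)\,r^j$ with each $\phi_j([\mathscr T])$ a degree-one form in $c_1,\ldots,c_t$. It suffices to exhibit, for any distinct $[\mathscr T],[\mathscr T']\in H^{t+1}$, some $j$ for which $\phi_j([\mathscr T])-\phi_j([\mathscr T'])$ does not vanish on the open subset of pairwise distinct non-zero $c$.

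The central structural observation to establish is that, when $\mathscr T$ is a complete reduced representative, each coefficient $\phi_j([\mathscr T])(c)$ reduces to a single difference $c_{\sigma_j}-c_{\tau_j}$ for indices $\sigma_j,\tau_j\in\{0,1,\ldots,t+1\}$, using the convention $c_0=c_{t+1}=0$. This should follow from the row-by-row definition in Section \ref{5.3} by a telescoping argument: using the well-numbering of complete tableaux recalled in Section \ref{5.1}, the labels of the partner blocks appearing in the contributions of two adjacent rows $R_u$ and $R_{u+1}$ coincide, so intermediate terms cancel across the sum over $u\le\operatorname{ht} C_j$. Applying the same cancellation to the pair $(\mathscr T,\mathscr T')$, the $r^j$-coefficient of the difference $f_{[\mathscr T]}-f_{[\mathscr T']}$, a priori a combination of four $c$-terms, further simplifies to a single difference $c_{a_j}-c_{b_j}$ (possibly zero).

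Granting these facts, the proof concludes via a short graph-theoretic argument. Suppose $f_{[\mathscr T]}(\alpha,r)=f_{[\mathscr T']}(\alpha,r)$ for some $\alpha$ with pairwise distinct non-zero entries; extending by $\alpha_0=\alpha_{t+1}=0$ and equating $r^j$-coefficients yields $\alpha_{a_j}=\alpha_{b_j}$ for every $j$. Build a graph $G$ on the vertex set $\{0,1,\ldots,t+1\}$ whose edges are the non-trivial pairs $\{a_j,b_j\}$; the system then forces $\alpha$ to be constant on the connected components of $G$. But \cite[4.7]{J} guarantees that $G$ has at least one edge $\{a,b\}$: if both $a,b\in\{1,\ldots,t\}$ then $\alpha_a=\alpha_b$ contradicts pairwise distinctness, while if $\{a,b\}\cap\{0,t+1\}\ne\emptyset$ then the equation forces the corresponding $\alpha_k$ to be $0$, contradicting non-vanishing. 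Either way one obtains the desired contradiction.

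The hard part is the structural claim of the second paragraph: verifying that the $r^j$-coefficients, both of $f_{[\mathscr T]}$ and of the difference $f_{[\mathscr T]}-f_{[\mathscr T']}$, genuinely collapse to single differences under the well-numbering. This requires a careful bookkeeping of how the alternating left-neighbour/right-neighbour references in odd and even rows interact with the boundary contributions at the leftmost/rightmost blocks of each row (these being absorbed into the convention $c_0=c_{t+1}=0$), but should be tractable by a direct induction on $\operatorname{ht} C_j$ that tracks the cancellations at each step.
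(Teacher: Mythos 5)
The paper's own ``proof'' of this lemma is a one-line citation to \cite[Section 5.2 (Lemma and $(*)$--$(***)$)]{J}, so there is no internal argument to compare against step by step; what you have written is essentially a reconstruction of what that citation contains. Your reduction is the right one: for a complete tableau the coefficient of $r^j$ in $f_{[\mathscr T]}$ really is a single difference $c_{\sigma_j}-c_{\tau_j}$ with $\sigma_j=j$ (under the convention $c_{t+1}=0$), so in $f_{[\mathscr T]}-f_{[\mathscr T']}$ the $c_j$'s cancel and each $r^j$-coefficient collapses to one difference $c_{a_j}-c_{b_j}$; combined with generic distinctness from \cite[4.7]{J} this yields exactly your dichotomy (either two distinct indices of $T$ are forced equal, or some $c_k$ is forced to vanish). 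The transposed form of your structural claim --- that the coefficient of $c_k$ in $f_{[\mathscr T]}$ is $r^k-r^{j_k}$ with positive part at index $k$ --- is precisely \cite[Lemma 5.2]{J} and is reproduced with a full case analysis in Section \ref{7.1.2} of this paper, so your route and the paper's are really the same; you have simply made explicit the short deduction that the paper leaves implicit.

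The one genuine weakness is that your load-bearing step is asserted (``should follow by telescoping'', ``should be tractable'') rather than proved, and two points in it need care. First, for the $r^j$-coefficient of a single $f_{[\mathscr T]}$ to be one difference you need, beyond the row-by-row telescoping, that at most one index $k$ has its partner $j_k$ equal to $j$, i.e.\ the map $k\mapsto j_k$ is injective where non-trivial; note that the unordered pairs $\{k,j_k\}$ need \emph{not} be disjoint (for the complete tableau with column heights $(3,2,1,3,2,1,3)$ the coefficient of $c_1$ is $r^1-r^4$ while that of $c_4$ is $r^4-r^7$, and the map need not be an involution either, as the heights $(2,2,1,1,2)$ give the $3$-cycle $2\mapsto 3\mapsto 4\mapsto 2$), so the induction must track exactly this injectivity and nothing stronger. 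Second, the collapse of the four-term difference to two terms is not ``the same cancellation'' again: it rests on the separate fact that the positive index of the $r^j$-coefficient is always $j$ itself, which is the assertion in \ref{7.1.2} that the coefficient of $c_k$ has the form $r^k-r^{j}$ rather than an arbitrary difference. With these two facts supplied --- or simply with the citation to \cite[Lemma 5.2]{J} that the paper itself uses --- your argument closes correctly.
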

\begin{proof}
This follows from \cite[Section 5.2 (Lemma and $(*)-(***)$)]{J}.
\end{proof}

Recall Section \ref{2.5} and set $f^*_{[\mathscr T]}=f_{[\mathscr T^*]}$. Then $f^*_{[\mathscr T]}$ obtains from $f_{[\mathscr T]}$ by replacing $c_i$ with $c_{t+1-i}$, $r^i$ with $r^{t+2-i}$ and adding $\sum\limits_{i=1}^tc_i(r^i-r^{i+1})$.

\subsection{The graph of links}\label{5.4}
In \cite[3.3.3]{J}, a labelled graph $\mathscr G_{t+1}$ for $H^{t+1}$ was defined as follows. A vertex of $\mathscr G_{t+1}$ is an element $[\mathscr T]$ of $H^{t+1}$. It is given the label $j$ if $[\mathscr T]\in H^{t+1}_j$ (see notation \ref{2.3}). In order to describe the edges we need to recall the notion of a quasi-extremal column of a complete tableau.

A column $C$ of a complete tableau $\mathscr T$ is called left (resp. right) quasi-extremal if it is not strongly extremal but all columns to its left (resp. its right) have height less or equal to $\height\, C$. By \cite[3.3.1]{J}, the height of a quasi-extremal column is $\ge \height\,\mathscr T-1$. Moreover, if it has height $\height\, \mathscr T-1$ it must lie to the left (resp. right) of the strongly extremal column if $\height\,\mathscr T$ is odd (resp. even).  If it has height $\height\, \mathscr T$ it must lie to the right (resp. left) of the strongly extremal column if $\height\,\mathscr T$ is odd (resp. even).

Let $\mathscr T$ be the complete reduced tableau of maximal height in its equivalence class $[\mathscr T]\in H_j^{t+1}$. Then $[\mathscr T]$ and $[\mathscr T']\in H_k^{t+1}$ are linked by an edge of label $i$ if the column $C_k$ of $\mathscr T$ is quasi-extremal and a tableau in $[\mathscr T']$ is obtained by placing a block of label $i$ on top of $C_k$ in $\mathscr T$. (Note that by \cite[Lemma 3.3.2]{J}, placing a box on top of the column $C_k$ of $\mathscr T$ gives a tableau in $H_k^{t+1}$ if and only if $C_k$ is quasi-extremal).

The graph $\mathscr G_{t+1}$ is called the graph of links of $H^{t+1}$. The set of vertices (resp. edges) of $\mathscr G_{t+1}$ is denoted by $V(\mathscr G_{t+1})$ (resp. $E(\mathscr G_{t+1})$). The labels of $V(\mathscr G_{t+1})$ lie in $\hat T:=\{1, 2, \dots, t+1\}$ whereas the labels of $E(\mathscr G_{t+1})$ lie in $T:=\{1, 2, \dots, t\}$. The subset of $V(\mathscr G_{t+1})$ with label $k\in \hat T$ is denoted by $V(\mathscr G_{t+1})^k$. For any graph $\mathscr G$ we will use analogous notation, namely $V(\mathscr G),\, E(\mathscr G),\, V(\mathscr G)^k$ for the sets of vertices, edges and vertices of label $k$ of $\mathscr G$ respectively.

\subsection{Properties of $\mathscr G_{t+1}$}\label{5.5}
We list certain properties of $\mathscr G_{t+1}$, proven in \cite[Section 6]{J}.
\begin{enumerate}[label=(P\arabic*)]
\item No two vertices having the same label are linked by an edge.
\item No two edges having the same label emanate from the same vertex.
\item It is an evaluation graph: we may assign a function $f_v$ to each vertex $v$, such that if $v,\, v'$ with labels $j$ and $k$ respectively are linked by an edge of label $i$, then $f_v-f_{v'}=c_i(r^j-r^k)$. Note that $\mathscr G_{t+1}$ can have cycles, so this is a non-trivial condition.
\item It has a unique chain of labelled vertices and edges
$$\xy \morphism(0,0)|a|/{-}/<400,0>[t+1`t;t] \morphism(400,0)|a|/{-}/<400,0>[t`t-1;t-1]\morphism(800,0)|m|/{--}/<800,0>[t-1`2;] \morphism(1600,0)|a|/{-}/<400,0>[2`1;1]
\endxy$$
We call it the pointed chain.
\item It is connected.
\item It is {\it triadic}: A triad in a labelled graph is a set of vertices $(a, b, c, d)$ successively joined by three edges $(a, b), (b, c), (c, d)$ such that the labels of $(a, b)$ and $(c, d)$ are equal. A graph is called triadic if for every triad the labels of the vertices $a$ and $d$ are equal.
\end{enumerate}

It is clear that all subgraphs of $\mathscr G_{t+1}$ inherit properties (P1), (P2), (P3) and (P6).\\

\noindent {\bf Remarks.}
\begin{enumerate}
\item The function assigned to the vertex of label $t+1$ of the pointed chain of (P4) is called the ``driving function'' and is denoted by $h$. The vertex is denoted by $v_h$. For most of our present purposes, we may and do set $h$ equal to zero. Its dual function $h^*$ is the function corresponding to the vertex of label $1$ in the pointed chain. The function $f_v$ assigned to a vertex $v$ is $f_v=h+f_{[\mathscr T]}$, where $[\mathscr T]$ is the class representing $v\in V(\mathscr G_{t+1})$. See more about the driving function in \ref{5.8.1}.
\item Let $(a, b, c, d)$ be a triad, and let $i$ be the label of $(a, b),\, (c, d)$ and $j$ the label of $(b, c)$. Let $[\mathscr T_a], [\mathscr T_d]$ be the equivalence classes in $H^{t+1}$ corresponding to vertices $a$ and $d$ respectively. By \cite[Lemma 6.6]{J}, the relation $c_i<c_j$ is contained in the partial order assigned to exactly one of $[\mathscr T_a]$ and $[\mathscr T_d]$. We will call this triad an $(i<j)$-triad. We say that an $(i<j)$-triad defines the order relation $c_i<c_j$.
\end{enumerate}

\subsection{$S$-graphs}\label{5.6}
\subsubsection{Subgraphs of $\mathscr G_{t+1}$}\label{5.6.1}
View the $c_i,\,i \in T$ as non-negative integers and let $\textbf{c}$ be the set $\{c_i\}_{i \in T}$ given a linear order. Let $H^{t+1}({\bf c})$ be the subset of $H^{t+1}$ containing the equivalence classes of tableaux whose assigned partial order is compatible with ${\bf c}$. Let $\mathscr G({\bf c})$ be the subgraph of $\mathscr G_{t+1}$ having vertices the elements of $H^{t+1}({\bf c})$. Below we recall the inductive procedure to construct these graphs, given in \cite[Section 7]{J} (the fact that this construction gives the aforementioned subgraphs of $\mathscr G_{t+1}$ is proven in \cite[Theorem 8.5]{J}).

First of all, for $t=1$ the order relations are empty and $\mathscr G_2=\mathscr G({\bf c})$ is the graph
$$\xy \morphism(0,0)|a|/{-}/<400,0>[1`2;1]\endxy.$$

Let ${\bf c}: c_{i_1}< c_{i_2}< \cdots <c_{i_t}$ and set $s:=i_t$ for simplicity.
Set $\textbf{c}'=\textbf{c}\setminus \{c_s\}$ with the following change of labelling. Leave the labels in $[1,s-1]$ unchanged and decrease the labels in $[s+1,t]$ by $1$. Let $\mathscr G(\textbf{c}')$ be the subgraph of $\mathscr G_t$ defined by $\textbf{c}'$ with its induced linear order.

Let $\mathscr G^+$ be the graph obtained from $\mathscr G(\textbf{c}')$ by changing the labelling on edges and on vertices in the following manner. The labels in $[1,s-1]$ are left unchanged, the labels in $[s,t]$ are increased by $1$.  In this graph there are no labels equal to $s$ either on edges or on vertices.

Now define a graph $\mathscr G^-$ isomorphic to $\mathscr G^+$ as an unlabelled graph.  Let $\varphi:\mathscr G^+ \iso \mathscr G^-$ be the resulting unlabelled graph isomorphism.  Then $\varphi$ leaves all labels fixed except that it takes a vertex $v \in V(\mathscr G^+)^{s+1}$ to the vertex $\varphi(v) \in V(\mathscr G^-)^s$. In particular, there are no labels equal to $s$ on edges nor labels equal to $s+1$ on vertices.

Then $\mathscr G(\textbf{c})$ is the union of $\mathscr G^+$ and $\mathscr G^-$ in which each $v \in V(\mathscr G^+)^{s+1}$ is joined by an edge with label $s$ to $\varphi(v) \in V(\mathscr G^-)^s$. In particular, $\mathscr G({\bf c})$ is connected.\\

\noindent {\bf Remark.} Notice that by Remark \ref{5.5} (2), the order defined by the set of all triads in $\mathscr G({\bf c})$ is compatible with the linear order ${\bf c}$.\\

By the inductive construction $\mathscr G({\bf c})$ has $2^t$ vertices. In other words, for all $t!$ possible linear orders ${\bf c}$ there are exactly $2^t$ equivalence classes of tableaux in $H^{t+1}$ whose assigned partial orders are compatible with ${\bf c}$. However, it turns out that not all $\mathscr G({\bf c})$ are distinct even as labelled graphs. In Lemma \ref{6.7} we compute the precise number of distinct labelled graphs to be the Catalan number $\mathcal C_t$.

\subsubsection{Examples} \label{5.6.2} Let us construct the two graphs for $t=2$ from the graph $$\mathscr G:=\mathscr G(c_1): \xy \morphism(0,0)|a|/{-}/<400,0>[1`2;1]\endxy.$$
For $c_1<c_2$ we have $\mathscr G^+: \xy \morphism(0,0)|a|/{-}/<400,0>[1`3;1]\endxy$ and $\mathscr G^-: \xy \morphism(0,0)|a|/{-}/<400,0>[1`2;1]\endxy$. Hence $\mathscr G(c_1<c_2)$ is the graph
$$\xy \morphism(0,0)|a|/{-}/<400,0>[1`3;1]\morphism(0,-400)|b|/{-}/<400,0>[1`2;1]\morphism(400,-400)|r|/{-}/<0,400>[2`3;2]\endxy$$

Similarly, for $c_2<c_1$, we have that $\mathscr G^+: \xy \morphism(0,0)|a|/{-}/<400,0>[2`3;2]\endxy$ and $\mathscr G^-: \xy \morphism(0,0)|a|/{-}/<400,0>[1`3;2]\endxy$. Hence $\mathscr G(c_2<c_1)$ is the graph $$\xy \morphism(0,0)|a|/{-}/<400,0>[2`3;2]
\morphism(0,-400)|b|/{-}/<400,0>[1`3;2]
\morphism(0,-400)|l|/{-}/<0,400>[1`2;1]\endxy$$

Similarly, we construct the graphs for $t=3$ for the linear orders ${\bf c}: c_2<c_1<c_3$ and ${\bf c'}: c_2<c_3<c_1$. We obtain the same graph in both cases, namely the graph :

$$\mathscr G({\bf c})=\mathscr G({\bf c'})=\bfig
\node a(0,0)[2]
\node b(300,300)[4]
\node c(0,-400)[1]
\node d(300,-700)[4]
\node A(1000,0)[2]
\node B(700,300)[3]
\node C(1000,-400)[1]
\node D(700,-700)[3]
\arrow/-/[a`b;2]
\arrow|l|/-/[a`c;1]
\arrow|b|/-/[d`c;2]
\arrow/-/[A`B;2]
\arrow|r|/-/[A`C;1]
\arrow|b|/-/[D`C;2]
\arrow|a|/-/[b`B;3]
\arrow|b|/-/[d`D;3]
\efig$$

For the remaining four linear orders (in the case $t=3$) the graphs are distinct as labelled graphs, but isomorphic to a single tree as unlabelled graphs. This gives altogether 5 distinct labelled graphs.

\subsubsection{}\label{5.6.3} The following Lemma will be useful to us later in Section \ref{6.7}.

\begin{lemma}  The labelled graphs $\mathscr G(c_{i_1}<c_{i_2}<\ldots < c_{i_{t-2}}< c_{i_{t-1}} < c_{i_t})$ and $\mathscr G(c_{i_1}<c_{i_2}<\ldots<c_{i_{t-2}}< c_{i_t}< c_{i_{t-1}})$ coincide if $|i_{t-1}-i_t|>1$.
\end{lemma}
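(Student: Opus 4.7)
The plan is to prove equality of the two subgraphs of $\mathscr G_{t+1}$ by showing that they share the same vertex set $H^{t+1}({\bf c})$, from which coincidence follows since both graphs inherit their edges from $\mathscr G_{t+1}$. Set $r := i_{t-1}$ and $s := i_t$; without loss of generality assume $r < s$, so $s - r \geq 2$. Because ${\bf c}_1$ and ${\bf c}_2$ differ only by the swap of the last two entries $c_r$ and $c_s$, a partial order on $\{c_1, \ldots, c_t\}$ is compatible with ${\bf c}_1$ if and only if it is compatible with ${\bf c}_2$, \emph{provided} it contains no relation between $c_r$ and $c_s$. It therefore suffices to show that no $[\mathscr T] \in H^{t+1}({\bf c}_1)$ has a relation between $c_r$ and $c_s$ in its assigned partial order.

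To this end I would inspect the partial-order rules of Section \ref{5.2} applied to a well-numbered complete tableau $\mathscr T$. The key observation is that the inequalities generated by any single block form a contiguous family. From a level-$j$ block $B(r,j)$ with $j$ odd (and right-neighbour column $C_k$ at level $j-1$), one reads off $c_r \leq c_{r+1}, c_r \leq c_{r+2}, \ldots, c_r \leq c_{k-1}$. From a level-$j$ block $B(r+1,j)$ with $j$ even (and left-neighbour $C_k$ at level $j-1$), one reads $c_r \leq c_\ell$ for the valid $\ell \in \{k, r+1, r+2\}$, where $\ell = r+1$ is automatically valid since $\operatorname{ht}(C_{r+1}) \geq j > j-1$. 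In either rule, the presence of a relation $c_r \leq c_m$ with $m \geq r+2$ forces the companion $c_r \leq c_{r+1}$ to lie in the same partial order.

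Suppose for contradiction that some $[\mathscr T] \in V(\mathscr G({\bf c}_1))$ has $c_r \leq c_s$. By the observation, $c_r \leq c_{r+1}$ also lies in its partial order. Since $s - r \geq 2$ we have $r+1 \notin \{r, s\}$, so $c_{r+1} \in \{c_{i_1}, \ldots, c_{i_{t-2}}\}$ occupies one of the first $t-2$ positions of ${\bf c}_1$, while $c_r$ sits at position $t-1$. Thus $c_{r+1}$ precedes $c_r$ in ${\bf c}_1$, contradicting compatibility. For the reverse relation $c_s \leq c_r$, inspection of the two rules shows it can only arise from the $j$-even rule at some block $B(s+1,j)$, which simultaneously produces $c_s \leq c_{s+1}$ whenever $s < t$; an identical position comparison (with $c_{s+1}$ preceding $c_s$) yields the contradiction. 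If instead $s = t$, the relation $c_t \leq c_r$ is itself already incompatible with ${\bf c}_1$ because $c_t$ occupies the last position while $c_r$ sits just before it.

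The main obstacle is the bookkeeping of the partial-order rules: one must confirm the contiguity claim for both parities of $j$, verify the validity condition $\operatorname{ht}(C_\ell) \geq j-1$ for $\ell = r+1$ (respectively $\ell = s+1$), and separately dispatch the boundary situations $r=1$, $s=t$, and $s=r+2$ (where the even-parity rule is the source of the relation). Once this contiguity is in hand, the position comparison in the linear order ${\bf c}_1$ is immediate and yields $V(\mathscr G({\bf c}_1)) = V(\mathscr G({\bf c}_2))$, whence the two induced subgraphs of $\mathscr G_{t+1}$ coincide.
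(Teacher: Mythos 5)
Your overall strategy is genuinely different from the paper's: the paper never touches the partial-order rules of Section \ref{5.2}, but instead shows that the inductive construction of \ref{5.6.1} can be carried out by splitting off $c_{i_{t-1}}$ and $c_{i_t}$ in either order, both orders assembling the same quadruple of smaller graphs with the same gluing edges. Your reduction to vertex sets is legitimate in principle, since $\mathscr G(\mathbf c)$ is by definition the subgraph of $\mathscr G_{t+1}$ on the vertex set $H^{t+1}(\mathbf c)$, and your observation that compatibility with the two linear orders can only differ through a relation between $c_r$ and $c_s$ is correct.

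The gap is in your contiguity claim for the even-parity rule. Test your reading against the paper's own example in \ref{5.2}: the complete tableau of shape $(2,1,2,2)$ has $B(3,2)$ labelled $2$, and its assigned partial order is stated to be $c_2\le c_1$ \emph{alone}; in particular $c_2\le c_3$ is \emph{not} generated. Under your reading of the $j$-even rule at $B(r+1,j)$ with $r=2$, the relation $c_2\le c_{r+1}=c_3$ would be ``automatically valid'', while $c_2\le c_1$ could not be produced at all (the range starting at the left neighbour $C_k=C_2$ never reaches $\ell=1$). So the rule, as you have parsed it, assigns the wrong partial order to this class. The reading that actually reproduces the example yields from an even-row block only \emph{downward} relations $c_m\le c_\ell$ with $\ell\le m$, contiguous down to the left neighbour; consequently the companion of a downward jump $c_s\le c_r$ with $r\le s-2$ is $c_s\le c_{s-1}$, not $c_s\le c_{s+1}$ as you assert. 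Your conclusion happens to survive the correction --- $s-1\notin\{r,s\}$ when $|r-s|\ge 2$, so $c_{s-1}$ still precedes $c_s$ in the linear order and the contradiction goes through, and upward jumps of size $\ge 2$ arise only from the odd rule, where your contiguity is right --- but the step you defer as ``bookkeeping'' is precisely where the argument as written is false, and it would also need to be checked for relations arising by transitivity. The paper's route via the commutativity of the construction in \ref{5.6.1} avoids any dependence on these rules.
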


\begin{proof}  Set $i_{t-1}=s$ and $i_t=r$.  We can assume that $s-2\geq r$, equivalently that $s-1\geq r+1$ both of which will be used below.  We follow the construction of \ref{5.6.1} applied twice in the two different ways. 

Set ${\bf c_1}: c_{i_1}<c_{i_2}<\ldots < c_{i_{t-2}}< c_{s} < c_{r}$ and ${\bf c_2}: c_{i_1}<c_{i_2}<\ldots<c_{i_{t-2}}< c_{r}< c_{s}$. Recall \ref{5.6.1} the relabelling for ${\bf c_1'}:={\bf c_1}\setminus \{c_r\},\, {\bf c_2'}:={\bf c_2}\setminus \{c_s\}$ and the notations $\mathscr G({\bf c_i'}),\, \mathscr G({\bf c_i'})^\pm$, for $i=1, 2$.

Set $\textbf{c}''=\textbf{c}\setminus \{c_r,c_s\}$ with the following change of labelling.  Leave the labels in $[1,r-1]$ unchanged, decrease the labels in $[r+1,s-1]$ by $1$ and decrease the labels in $[s+1,t]$ by $2$.  Then $\mathscr G(\textbf{c}'')$ is the subgraph of $\mathscr G_{t-1}$ defined by $\textbf{c}''$ with its induced linear order.

Now let $\mathscr G^{++}$ be the graph obtained from $\mathscr G(\textbf{c}'')$ by changing the labels on edges and on vertices in the following manner.  The labels in $[1,r-1]$ are left unchanged, the labels in $[r,s-2]$ are increased by $1$ and the labels in $[s-1,t-1]$ are increased by $2$.  In this graph there are no labels indexed by $r$ or by $s$ neither on edges nor on vertices.

Now we define three further graphs $\mathscr G^{-+},\mathscr G^{+-},\mathscr G^{--}$ isomorphic to $\mathscr G^{++}$ as unlabelled graphs. Let

$$\bfig
\node a(0,0)[\mathscr G^{++}]
\node b(600,0)[\mathscr G^{-+}]
\node c(0,-400)[\mathscr G^{+-}]
\node d(600,-400)[\mathscr G^{--}]
\arrow|a|/->/[a`b;\varphi_r]
\arrow|a|/->/[c`d;\varphi_r]
\arrow|l|/->/[a`c;\varphi_s]
\arrow|r|/->/[b`d;\varphi_s]
\efig$$
be the commuting diagram of unlabelled graph isomorphisms which leaves labels on edges and vertices fixed except that $\varphi_r$ (resp. $\varphi_s$) takes a vertex with label $r+1$ (resp. $s+1$) to a vertex with label $r$ (resp. $s$).  Notice that we can also view $\varphi_r$ (resp. $\varphi_s$) as an unlabelled graph isomorphism of the $\mathscr G^{++}\times \mathscr G^{+-}$ onto $\mathscr G^{-+}\times \mathscr G^{--}$ (resp.  $\mathscr G^{++}\times \mathscr G^{-+}$ onto $\mathscr G^{+-}\times \mathscr G^{--}$).

Consider the graph $\mathscr G_2^+$ obtained from the pair $\mathscr G^{++},\mathscr G^{-+}$ by joining $v \in V(\mathscr G^{++})^{r+1}$ to $\varphi_r(v) \in V(\mathscr G^{-+})^r$ by an edge with label $r$.  Then $\mathscr G_2^+$ is just $\mathscr G({\bf c_1'})^+$ by the construction of \ref{5.6.1}. Again the graph $\mathscr G_2^-$ obtained from the pair $\mathscr G^{+-},\mathscr G^{--}$ by joining $v \in V(\mathscr G^{+-})^{r+1}$ to $\varphi_r(v)\in V(\mathscr G^{-+})^r$ by an edge with label $r$ is just the graph $\mathscr G({\bf c_1'})^-$.  Note that the graphs $\mathscr G_2^+$ and $\mathscr G_2^-$ are isomorphic as unlabelled graphs. Then the graph we obtain from the pair $\mathscr G_2^+,\mathscr G_2^-$  by joining
$v \in V(\mathscr G_2^+)^{s+1}$ to $\varphi_s(v)\in V(\mathscr G_2^-)^{s}$ by an edge with label $s$ is just $\mathscr G({\bf c_1})$.

Similarly the graph $\mathscr G_1^+$ obtained from the pair $\mathscr G^{++},\mathscr G^{+-}$ by joining $v \in V(\mathscr G^{++})^{s+1}$ to $\varphi_s(v) \in V(\mathscr G^{-+})^s$ by an edge with label $s$ is just $\mathscr G({\bf c_2'})^+$. Again the graph $\mathscr G_1^-$ obtained from the pair $\mathscr G^{-+},\mathscr G^{--}$ by joining $v \in V(\mathscr G^{-+})^{s+1}$ to $\varphi_s(v) \in V(\mathscr G^{--})^s$ by an edge with label $s$ is just $\mathscr G({\bf c_2'})^-$. Again the graphs $\mathscr G_1^+$ and $\mathscr G_1^-$ are isomorphic as unlabelled graphs and the graph we obtain from the pair $\mathscr G_1^+,\mathscr G_1^-$  by joining
$v \in V(\mathscr G_2^+)^{r+1}$ to $\varphi_r(v)\in V(\mathscr G_2^-)^{r}$ by an edge with label $r$ is just $\mathscr G({\bf c_2})$.

Finally the conclusion of the lemma obtains from the fact that both graphs in its conclusion are obtained from the quadruple $\mathscr G^{++},\mathscr G^{-+},\mathscr G^{+-},\mathscr G^{--}$ by joining vertices with label $r+1$ (resp. $s+1$) to corresponding vertices with label $r$ (resp. $s$) by an edge with label $r$ (resp. $s$).
\end{proof}

\subsubsection{} \label{5.6.4}
Recall what is meant by an $(r<s)$-triad (Remark \ref{5.5} (2)).

\begin{cor} Let $\mathscr G_1=\mathscr G(c_{i_1}<c_{i_2}<\cdots <c_{i_{t-2}}<c_s<c_r)$ and $\mathscr G_2=\mathscr G(c_{i_1}<c_{i_2}<\cdots <c_{i_{t-2}}<c_r<c_s)$ with $|s-r|>1$. Then $\mathscr G_1,\, \mathscr G_2$ have neither $(r<s)$ nor $(s<r)$-triads.
\end{cor}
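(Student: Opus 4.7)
The plan is to leverage Lemma \ref{5.6.3}, which was just established, together with the compatibility observation in Remark \ref{5.6.1} that the order defined by all triads in $\mathscr G(\textbf{c})$ must be compatible with the linear order $\textbf{c}$. The argument will be essentially a one-line contradiction: any $(i<j)$-triad in a subgraph $\mathscr G(\textbf{c})$ forces the relation $c_i<c_j$ into the partial order it induces, so it cannot coexist with the opposite relation in $\textbf{c}$.

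First I would rule out $(r<s)$-triads in $\mathscr G_1$. By Remark \ref{5.5}(2), any such triad defines the relation $c_r<c_s$. But the linear order associated with $\mathscr G_1$ places $c_s<c_r$, and Remark \ref{5.6.1} asserts that the order defined by the triads of $\mathscr G_1$ is compatible with its linear order; so $c_r<c_s$ cannot appear. Hence $\mathscr G_1$ admits no $(r<s)$-triad. The same argument, applied directly to $\mathscr G_2$ (whose linear order places $c_r<c_s$), shows that $\mathscr G_2$ admits no $(s<r)$-triad.

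Next, the ``cross'' cases---$(s<r)$-triads in $\mathscr G_1$ and $(r<s)$-triads in $\mathscr G_2$---are exactly where Lemma \ref{5.6.3} does its work. Since $|s-r|>1$, that lemma gives $\mathscr G_1=\mathscr G_2$ as labelled graphs, so the two graphs have identical sets of triads, and each triad has the same $(i<j)$-type in both. The previous paragraph therefore rules out $(s<r)$-triads in $\mathscr G_1$ (by transport from $\mathscr G_2$) and $(r<s)$-triads in $\mathscr G_2$ (by transport from $\mathscr G_1$), completing all four cases.

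The only genuine obstacle has already been surmounted in Lemma \ref{5.6.3}: once the two labelled graphs are identified, the corollary is a pure bookkeeping step matching the defining property of $(i<j)$-triads against compatibility with the ambient linear order, with no further diagram manipulation required.
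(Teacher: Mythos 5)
Your proposal is correct and follows exactly the paper's argument: compatibility of triads with the ambient linear order (Remark \ref{5.6.1}) excludes the $(r<s)$-triads in $\mathscr G_1$ and the $(s<r)$-triads in $\mathscr G_2$, and the identification $\mathscr G_1=\mathscr G_2$ from Lemma \ref{5.6.3} transports these exclusions to cover the remaining two cases. Nothing essential differs from the paper's proof.
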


\begin{proof} By Remark \ref{5.6.1} the triads of $\mathscr G_1,\, \mathscr G_2$ must be compatible with the linear orders $c_{i_1}<c_{i_2}<\cdots <c_{i_{t-2}}<c_s<c_r$ and $c_{i_1}<c_{i_2}<\cdots <c_{i_{t-2}}<c_r<c_s$ respectively.  Thus $\mathscr G_1$ cannot have an $(r<s)$-triad and $\mathscr G_2$ cannot have an $(s<r)$-triad. Yet by Lemma \ref{5.6.3} these graphs coincide. Hence the assertion.
\end{proof}

\subsubsection{} \label{5.6.5}
By \cite[Lemma 7.5]{J}, the subgraphs $\mathscr G({\bf c})$ have the following property:
\begin{enumerate}[label=(P\arabic*)]
\setcounter{enumi}{6}
\item For all vertex $v$ and all $k\in T$, there exists a vertex $v'$ of label $k$ such that there exists an {\it ordered path} from $v$ to $v'$, that is a path
$$v=\xy \morphism(0,0)|a|/{-}/<400,0>[v_1`v_2;i_1] \morphism(400,0)|a|/{-}/<400,0>[v_2`v_3;i_2]\morphism(800,0)|m|/{--}/<800,0>[v_3`v_n;] \morphism(1600,0)|a|/{-}/<400,0>[v_n`v_{n+1};i_n]
\endxy=v'$$
with $c_{i_1}< c_{i_2}< \cdots < c_{i_n}$.
\end{enumerate}

A non-empty graph with properties (P1), (P2), (P3), (P5), (P6) and (P7) is called an $S$-graph of order $t$. Note that an $S$-graph has at least one edge for all $t\ge 1$, since it is non-empty and it satisfies (P7). By \cite[Corollary 7.7]{J}, $\mathscr G({\bf c})$ is an $S$-graph.\\

\noindent {\bf Remark.} The graphs $\mathscr G({\bf c})$ contain the pointed chain of $\mathscr G_{t+1}$ \cite[Lemma 7.3]{J}. This can be deduced by the construction of \ref{5.6.1} as well as by the fact that the elements of $H^{t+1}$ represented by the vertices of the pointed chain have the empty partial order assigned to them, hence they belong to $H^{t+1}({\bf c})$ \cite[Lemma 6.3]{J}. Note also that the vertex $v_h$ (having label $t+1$ in the pointed chain) lies in $\mathscr G^+$.

\subsubsection{} \label{5.6.6}
The following result essentially follows from \cite[Lemma 7.5]{J}.

\begin{lemma}
Given $v\in V(\mathscr G({\bf c}))$ and $k\in \hat T$, there exists a unique $v'\in V(\mathscr G({\bf c}))^k$, for which there exists an ordered path from $v$ to $v'$.
\end{lemma}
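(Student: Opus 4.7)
Existence is property (P7) from \ref{5.6.5}, so I would focus on proving uniqueness, by induction on $t$. For the base case $t=1$, the graph $\mathscr G({\bf c})=\mathscr G_2$ has two vertices of labels $1$ and $2$ joined by a single edge of label $1$, and the assertion is immediate.

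For the inductive step I would invoke the construction of \ref{5.6.1}: let $c_s$ be the maximum element of ${\bf c}$, so that $\mathscr G({\bf c})=\mathscr G^+\cup \mathscr G^-$ joined by crossing edges of label $s$, and each of $\mathscr G^\pm$ is obtained from $\mathscr G({\bf c}')$ by the prescribed relabelling of vertices and edges (where ${\bf c}'={\bf c}\setminus \{c_s\}$ carries the induced linear order). Under this relabelling, ordered paths in $\mathscr G^\pm$ correspond bijectively to ordered paths in $\mathscr G({\bf c}')$. The critical observation is that, as $c_s$ is maximal in ${\bf c}$, any ordered path in $\mathscr G({\bf c})$ uses at most one edge of label $s$, and if so that edge is its last.

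Assuming $v\in V(\mathscr G^+)$ (the case $v\in V(\mathscr G^-)$ is symmetric), I would split on $k$. If $k\neq s$, any ordered path from $v$ to a vertex of label $k$ must remain in $\mathscr G^+$, since traversing a crossing edge would force its endpoint to have label $s$; the inductive hypothesis applied to $\mathscr G^+$ (read off from $\mathscr G({\bf c}')$) then produces a unique such $v'\in V(\mathscr G^+)^k$. If $k=s$, then $\mathscr G^+$ has no vertex of label $k$, so $v'\in V(\mathscr G^-)^s$ and the path decomposes as an ordered path from $v$ to some $w\in V(\mathscr G^+)^{s+1}$ followed by the crossing edge to $\varphi(w)=v'$; invoking the inductive hypothesis on $\mathscr G^+$ for label $s+1$ (which corresponds to label $s$ in $\mathscr G({\bf c}')$ under the relabelling) yields a unique $w$, and hence a unique $v'$.

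The main obstacle is the relabelling bookkeeping, namely checking that the induced linear order on ${\bf c}'$ matches the relabelling of the edge labels of $\mathscr G^\pm$, so that the notion of ordered path is preserved by the identification $\mathscr G^\pm \leftrightarrow \mathscr G({\bf c}')$. Once that is in place, the remainder is a routine case analysis driven by the principle that an edge of label $s$, if it occurs in an ordered path, must be the last edge.
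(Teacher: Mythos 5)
Your proposal is correct and follows essentially the same route as the paper: induction on $t$ via the $\mathscr G^+\cup\mathscr G^-$ decomposition of \ref{5.6.1}, with the key point that maximality of $c_s$ forces any edge of label $s$ in an ordered path to be the last one, so the path either stays in one half or ends with a single crossing edge. The relabelling bookkeeping you flag is likewise left implicit in the paper's own argument.
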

\begin{proof}
We may show this by induction. For $t=1$ or $2$ the uniqueness of $v'$ is immediate. Recall the notations and construction of \ref{5.6.1} and let $c_s$ be the unique maximal element in ${\bf c}$. Then $\mathscr G({\bf c})$ is obtained by linking $\mathscr G^+,\, \mathscr G^-$ with edges of label $s$, connecting vertices $v$ in $V(\mathscr G^+)^{s+1}$ with vertices $\varphi(v)\in V(\mathscr G^-)^s$. Recall that $\mathscr G^+$ (resp. $\mathscr G^-$) does not have vertices of label $s$ (resp. $s+1$) and both are isomorphic to $\mathscr G({\bf c}\setminus \{c_s\})$ after relabelling.

Given $v\in V(\mathscr G^+)$ (resp. $V(\mathscr G^-)$) and $j\in T\setminus\{s\}$ (resp. $j\in T\setminus \{s+1\}$), since $c_s$ is the unique maximal element in ${\bf c}$, an ordered path from $v$ to a vertex $v'\in V(\mathscr G({\bf c}))^j$ lies entirely in $\mathscr G^+$ (resp. $\mathscr G^-$) and by the induction hypothesis $v'$ is unique. If now $v\in \mathscr G^+$ (resp. $v\in \mathscr G^-$) and $j=s$ (resp. $j=s+1$), an ordered path from $v$ to $v'$ of label $j$ is the concatenation of an ordered path from $v$ to a vertex $v''$ of label $s+1$ (resp. $s$) lying entirely in $\mathscr G^+$ (resp. $\mathscr G^-$) with the unique edge of label $s$ connecting $v''$ with $\mathscr G^-$ (resp. $\mathscr G^-$).
\end{proof}

\noindent {\bf Remark.} The above lemma is not true for an arbitrary $S$-graph, as we will see in Section \ref{7.4}.

\subsubsection{}\label{5.6.7}
\begin{lemma}
Let
$$v=\xy \morphism(0,0)|a|/{-}/<400,0>[v_1`v_2;i_1] \morphism(400,0)|a|/{-}/<400,0>[v_2`v_3;i_2]\morphism(800,0)|m|/{--}/<800,0>[v_3`v_n;] \morphism(1600,0)|a|/{-}/<400,0>[v_n`v_{n+1};i_n]
\endxy=v'$$
be an ordered path from $v$ to $v'$ in $\mathscr G({\bf c})$ and let $k_i$ be the label of $v_i$, for all $i$, with $1\le i\le n+1$.
\begin{enumerate}
\item The $i_j,\, 1\le j\le n$ are pairwise distinct.
\item The $k_j,\, 1\le j\le n+1$ are pairwise distinct.
\end{enumerate}
\end{lemma}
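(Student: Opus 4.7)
For part (1), the hypothesis that the path is ordered says $c_{i_1}<c_{i_2}<\cdots<c_{i_n}$. This strict chain forces the $c_{i_j}$ to be pairwise distinct, and hence so are the indices $i_j$. No further input is needed.

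For part (2), the plan is induction on $t=|{\bf c}|$, using the inductive construction recalled in \ref{5.6.1}. The base case $t=1$ gives the graph $1-2$, so the claim is trivial. For the induction step, let $c_s$ be the unique maximal element of $\bf c$ and write $\mathscr G({\bf c})=\mathscr G^+\cup \mathscr G^-$ as in \ref{5.6.1}, with the two pieces joined by edges of label $s$ connecting $v\in V(\mathscr G^+)^{s+1}$ to $\varphi(v)\in V(\mathscr G^-)^s$. Both $\mathscr G^\pm$ are isomorphic, as labelled graphs after the relabelling of \ref{5.6.1}, to $\mathscr G({\bf c}\setminus\{c_s\})$ with its induced linear order. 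In particular vertex-label distinctness in ordered paths is inherited from the inductive hypothesis applied to $\mathscr G({\bf c}\setminus\{c_s\})$, and crucially $\mathscr G^+$ contains no vertex of label $s$ and $\mathscr G^-$ contains no vertex of label $s+1$.

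Since $c_s$ is maximal in $\bf c$ and the path is ordered, the label $s$ can occur among the $i_j$ at most once, and if it occurs it must be equal to $i_n$. If the label $s$ does not occur, the entire path stays within $\mathscr G^+$ or within $\mathscr G^-$ (the two sides are linked only by $s$-edges), so the induction hypothesis gives pairwise distinct $k_j$ directly. If $i_n=s$, then the truncated path $v_1,\ldots,v_n$ lies in one of $\mathscr G^\pm$; by induction its labels $k_1,\ldots,k_n$ are pairwise distinct, and it remains to see that $k_{n+1}$ is new. If $v_n\in V(\mathscr G^+)^{s+1}$ then $v_{n+1}=\varphi(v_n)\in V(\mathscr G^-)^s$, so $k_{n+1}=s$, a label absent from $\mathscr G^+$ and thus from $\{k_1,\ldots,k_n\}$. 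Symmetrically, if $v_n\in V(\mathscr G^-)^s$ then $k_{n+1}=s+1$, absent from $\mathscr G^-$. Either way the labels along the whole path are pairwise distinct.

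The only delicate point, which I would state explicitly but not belabor, is that the relabelling from $\mathscr G({\bf c}\setminus\{c_s\})$ to $\mathscr G^\pm$ is an order-preserving bijection on both edge labels and vertex labels, so an ordered path in $\mathscr G^\pm$ pulls back to an ordered path in $\mathscr G({\bf c}\setminus\{c_s\})$ with its induced order, and distinctness of vertex labels is preserved. Everything else is bookkeeping around the observation that the maximal-index edge can only appear last, combined with the fact that one side of the decomposition omits precisely the vertex label introduced by crossing.
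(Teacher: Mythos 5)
Your proposal is correct and follows essentially the same route as the paper: part (1) is immediate from the definition of an ordered path, and part (2) is proved by induction via the $\mathscr G^+\cup\mathscr G^-$ decomposition of \ref{5.6.1}, using that an ordered path either stays in one half or crosses via a single $s$-labelled edge at the final step. You simply spell out the details (maximality of $c_s$ forcing the crossing to be last, and the absence of label $s$ from $\mathscr G^+$ resp.\ $s+1$ from $\mathscr G^-$) that the paper leaves implicit.
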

\begin{proof}
The first part follows by the definition of an ordered path; indeed, one has $c_{i_1}<c_{i_2}<\cdots <c_{i_n}$, hence the $i_j$ are pairwise distinct.

The second part follows by induction using the construction \ref{5.6.1} and the fact that an ordered path in $\mathscr G({\bf c})$ either lies entirely in $\mathscr G^+$ or $\mathscr G^-$, or it is the concatenation of an ordered path in $\mathscr G^+$ or $\mathscr G^-$ with an edge connecting $\mathscr G^+$ and $\mathscr G^-$, as in the proof of Lemma \ref{5.6.6}.
\end{proof}

\subsubsection{}\label{5.6.8}
\begin{lemma}
For any $S$-graph, the ordered path from $v$ to $v'$ is unique.
\end{lemma}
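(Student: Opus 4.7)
My plan is to argue by contradiction: suppose there exist two distinct ordered paths $P_1,P_2$ from $v$ to $v'$, with vertex sequences $v=u_1,u_2,\ldots,u_{n+1}=v'$ and $v=w_1,w_2,\ldots,w_{m+1}=v'$ and edge-label sequences $(i_1,\ldots,i_n),\,(j_1,\ldots,j_m)$ satisfying $c_{i_1}<\cdots<c_{i_n}$ and $c_{j_1}<\cdots<c_{j_m}$. Write $k_\ell,k'_\ell$ for the labels of $u_\ell,w_\ell$. The strategy has two steps: first use (P3) to force the two edge-label sequences to agree as sequences, then use (P2) to propagate vertex equality from the common start $v$.

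For the first step I would equate the two expansions of $f_v-f_{v'}$ obtained by telescoping along each path using (P3):
$$\sum_{\ell=1}^{n}c_{i_\ell}(r^{k_\ell}-r^{k_{\ell+1}})\;=\;f_v-f_{v'}\;=\;\sum_{\ell=1}^{m}c_{j_\ell}(r^{k'_\ell}-r^{k'_{\ell+1}}).$$
Being ordered, each path has pairwise distinct edge labels, so every $c_a$ appears in at most one summand on each side. Viewing the $c_a$ and $r^b$ as independent parameters (implicit in (P3)) and comparing the coefficient of $c_a$ for each $a\in T$: any label $a$ present on one side but absent on the other would force a single term $r^{k_\ell}-r^{k_{\ell+1}}$ to vanish, hence two adjacent vertices of $P_1$ (or $P_2$) would share a common label, contradicting (P1). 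Therefore $\{i_1,\ldots,i_n\}=\{j_1,\ldots,j_m\}$; since both sequences are strictly ordered by $c$-value, this forces $n=m$ and $i_\ell=j_\ell$ for all $\ell$.

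For the second step I would induct on $\ell$ to show that the actual vertices agree: $u_\ell=w_\ell$. The base $\ell=1$ is $v=v$. Given $u_\ell=w_\ell$, the $\ell$-th edges of $P_1$ and $P_2$ both emanate from this common vertex and carry the same label $i_\ell=j_\ell$, so by (P2) they are literally the same edge, whence $u_{\ell+1}=w_{\ell+1}$. This forces $P_1=P_2$, contradicting our assumption.

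The main obstacle, and the only somewhat subtle point, is the coefficient-separation used in the first step: it relies on reading the $f_v$ as polynomial expressions in the independent parameters $c_i,r^b$, which is exactly how (P3) is to be interpreted in the paper's setup. Beyond this the argument is purely combinatorial and invokes only (P1), (P2), (P3); none of (P5), (P6), (P7) are needed for the uniqueness assertion itself, even though (P7) is of course essential for the \emph{existence} of the ordered paths.
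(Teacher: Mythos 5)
Your proof is correct and follows the same overall strategy as the paper's: telescope $f_v-f_{v'}$ along both paths using (P3), compare coefficients to force the two paths to agree combinatorially, and then propagate vertex equality from the common start $v$ via (P2). The one genuine difference lies in the coefficient-comparison step. You compare coefficients of the $c_a$, which requires reading the $c_i$ as independent indeterminates; the payoff is that you only need pairwise distinctness of the \emph{edge} labels along an ordered path (immediate from the definition of an ordered path) together with (P1). The paper instead compares coefficients of the $r^b$ --- so only the $r$'s need be indeterminates, which is more robust when the $c_i$ are specialised to pairwise distinct numerical values --- but this requires knowing that the \emph{vertex} labels along an ordered path are pairwise distinct, i.e.\ part (2) of Lemma 5.6.7, whose proof relies on the inductive construction of $\mathscr G(\mathbf{c})$ from Section 5.6.1. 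Your variant is therefore marginally more self-contained for an abstract $S$-graph, at the price of a stronger genericity assumption on the $c_i$; the concluding (P2) induction is identical in both arguments.
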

\begin{proof}
This results from the evaluation property (P3) of an $S$-graph. Indeed, let
$$v=\xy \morphism(0,0)|a|/{-}/<400,0>[v'_1`v'_2;j_1] \morphism(400,0)|a|/{-}/<400,0>[v'_2`v'_3;j_2]\morphism(800,0)|m|/{--}/<800,0>[v'_3`v'_m;] \morphism(1600,0)|a|/{-}/<400,0>[v'_m`v'_{m+1};j_m]
\endxy=v'$$
be another ordered path. Let $k_p$ be the label of $v_p$, for all $p$, with $1\le p\le n$ and $\ell_q$ the label of $v'_q$, for all $q$, with $1\le q\le m$. Then, by (P3) one has that $$f_v-f_{v'}=\sum\limits_{p=1}^nc_{i_p}(r^{k_p}-r^{k_{p+1}})=\sum\limits_{q=1}^mc_{j_q}(r^{\ell_q}-r^{\ell_{q+1}}).$$ Moreover, by Lemma \ref{5.6.7}, the labels on the vertices of an ordered path are pairwise distinct. Since the $r^{k_p},\, r^{\ell_q}$ are indeterminates, the latter forces $\{k_p\}_{p=1}^n=\{\ell_q\}_{q=1}^m$ and $n=m$. But $v_1=v_1'$ and so $k_1=\ell_1$. This implies that $i_1=j_1$ and then, by (P2), $v_2=v_2'$. Then by induction $v_i=v_i'$, for all $i$, with $1\le i\le n$.
\end{proof}

\subsection{Sources and Sinks}\label{5.7}
Let $\mathscr G$ be a graph with vertices labelled by $\hat T$. Instead of labelling the edges by $T$, assign a direction to each edge. Define an ordered chain from a vertex $v$ to a vertex $v'$ to be a chain
$$v=v_1\rightarrow v_2\rightarrow \cdots \rightarrow v_n=v'.$$
Assume that $V(\mathscr G)^s\ne \emptyset$, for all $s\in \hat T$. In analogy to (P7) we may ask if it is possible to impose the condition (P7)$^\prime$ that for all $v$ and all $s\in \hat T$, there exists $v'\in V(\mathscr G)^s$ and an ordered chain from $v$ to $v'$.

One easily checks that (P7)$^\prime$ implies that $\mathscr G$ must contain cycles. Condition (P7) is subtly different to (P7)$^\prime$. On the other hand, if we fix $s\in \hat T$, say $s=t+1$, we may view an ordered path (in the sense of (P7)) from any $v\in \mathscr G$ to some element $v'\in V(\mathscr G)^{t+1}$ as defining an ordered chain (in the sense of (P7)$^\prime$). Indeed, let
$$v=\xy \morphism(0,0)|a|/{-}/<400,0>[v_1`v_2;] \morphism(400,0)|a|/{-}/<400,0>[v_2`v_3;]\morphism(800,0)|m|/{--}/<800,0>[v_3`v_n;] \morphism(1600,0)|a|/{-}/<400,0>[v_n`v_{n+1};]
\endxy=v'$$
be an ordered path and suppose that for a second element $w$, the path
$$w=\xy \morphism(0,0)|a|/{-}/<400,0>[w_1`w_2;] \morphism(400,0)|a|/{-}/<400,0>[w_2`w_3;]\morphism(800,0)|m|/{--}/<800,0>[w_3`w_m;] \morphism(1600,0)|a|/{-}/<400,0>[w_m`w_{m+1};]
\endxy=w'$$
from $w$ to $w'\in \mathscr G^{t+1}$ meets the path from $v$ to $v'$ at some vertex $v_i=w_j$. Then by Lemma \ref{5.6.6} $v'=w'$. This means that for all $v$, for the unique ordered path from $v$ to $v'\in V(\mathscr G)^{t+1}$ we can choose arrows on edges so that $v_i\rightarrow v_{i+1}$, for all $i=1, 2, \dots, n-1$. Thus the elements of $V(\mathscr G({\bf c}))^{t+1}$ become sinks as well as the roots of the union of trees, where union is $\mathscr G({\bf c})$.

\subsection{Degenerations} \label{5.8}
Recall that ${\bf c}$ is the set $\{c_i\}_{i\in T}$ of non-negative integers endowed with a linear order. Recall \ref{5.3} that the vertices of $\mathscr G_{t+1}$ and $\mathscr G({\bf c})$ represent functions which are linear combinations of the $r^i,\, i \in \hat{T}$. These functions are pairwise distinct if the $c_i$ are in general position (Lemma \ref{5.3}) which we can take to mean that they are non-zero and pairwise distinct.

However, in the construction of the dual Kashiwara functions it happens that some of the $c_i$ are zero or equal. Our aim in this paragraph is to study how the $S$-graphs $\mathscr G({\bf c})$ degenerate if we set $c_i=0$ or $c_i=c_j$, for some $i,\, j\in T$. In particular, we want to show that if two linearly ordered sets ${\bf c}$ and ${\bf c'}$ degenerate (in the above manner) to the same linear order ${\bf c''}$, then the corresponding graphs $\mathscr G({\bf c}),\, \mathscr G({\bf c'})$ (or rather the set of functions each define) degenerate to the same graph (set of functions).

In both these two cases we may proceed inductively, that is in the first case put the smallest element equal to zero and in the second case set nearest neighbours equal to zero. These are rather distinct operations and we shall consider them separately.

\subsubsection{} \label{5.8.1}
Recall Remark \ref{5.5} (1). The driving function takes the form $h=-\sum_{i=1}^tc_im^i$. It has the following significance.  Set $r^i-r^{i+1} = m^i+m^{i+1}$, for all $i \in T$. Then for all $[\mathscr T] \in H^{t+1}_j$ the function $f_{[\mathscr T]} +h$ has zero coefficient of $c_j$. Set $$\mathscr H(\textbf{c})=\{f_v\, |\, v \in \mathscr G(\textbf{c})\} = \{h+f_{[\mathscr T]}\,|\, [\mathscr T] \in H^{t+1}(\textbf{c})\},$$
where the definition of $H^{t+1}({\bf c})$ is given in \ref{5.6.1}.

Fix $c_r,\, 1\le r \leq t$ to be the smallest element of $\textbf{c}$. Our aim is to show that the set of functions $f \in \mathscr H(\textbf{c})$ evaluated at $c_r=0$ can be described in the following manner.

Consider the driving function $h$ evaluated at $c_r=0$.  It can be written as a sum $h=h_1+h_2$, where $h_1= -\sum\limits_{i=1}^{r-1}c_im^i$ and $h_2=-\sum\limits_{i=r+1}^tc_im^i$.

Let ${\bf c_1}$ be the subset $\{c_1,c_2,\ldots,c_{r-1}\}$ of $\textbf{c}$ given its induced order. Then we may construct the $S$-graph $\mathscr G({\bf c_1})$ as a subgraph of $\mathscr G_{r}$ to obtain the functions $\mathscr H({\bf c_1})=\{h_1+f_{[\mathscr T]}\,|\, [\mathscr T]\in H^r({\bf c_1})\}$. Set $\mathscr H_1: = \{h+f_{[\mathscr T]}\,|\, [\mathscr T]\in H^r({\bf c_1})\}$.

Let ${\bf c_2}$ be the subset $\{c_{r+1},c_{r+2},\ldots,c_t\}$ of $\textbf{c}$ given its induced order. Define the graph $\mathscr G_{t-r+1}$ as in \ref{5.4} but by adding $r$ to every label. Construct the $S$-graph $\mathscr G({\bf c_2})$ as a subgraph of $\mathscr G_{t-r+1}$. Then consider the set
$$\mathscr H(\textbf{c})_r:=\{h'+f_{[\mathscr T]}\,|\, [\mathscr T]\in H^{t-r+1}({\bf c_2}),\, h'\in \mathscr H_1\}.$$

\begin{lemma} One has $\{f|_{c_{r}=0}\,|\, f \in\mathscr H(\textbf{c})\}= \mathscr H(\textbf{c})_{r}$.
\end{lemma}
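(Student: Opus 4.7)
My plan is to show both sides have at most $2^{t-1}$ elements and then exhibit one inclusion, which forces equality.

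I would first establish that the edges of $\mathscr G(\textbf{c})$ with label $r$ form a perfect matching on the vertex set $V(\mathscr G(\textbf{c}))$. This I prove by induction on $t = |\textbf{c}|$ using the construction of \ref{5.6.1}. The base case $t = 1$ is immediate, since $\mathscr G_2$ consists of a single edge of label $1 = r$ covering both vertices. For the inductive step, let $c_s$ be the maximal element of $\textbf{c}$ and write $\mathscr G(\textbf{c})$ as the union of two copies $\mathscr G^+, \mathscr G^-$ of $\mathscr G(\textbf{c}')$ joined by edges of label $s \ne r$. Under the relabeling of \ref{5.6.1}, the smallest element $c_r$ of $\textbf{c}$ still indexes the smallest element of $\textbf{c}'$ (with index $r$ if $r < s$, or $r-1$ if $r > s$), so by the inductive hypothesis each copy $\mathscr G^\pm$ contains a perfect matching of label-$r$ edges, and their disjoint union yields a perfect matching on $V(\mathscr G(\textbf{c}))$.

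By the evaluation property (P3), any edge $(v, v')$ of label $r$ satisfies $f_v - f_{v'} = c_r(r^{\ell(v)} - r^{\ell(v')})$, where $\ell$ denotes the vertex label. Hence $f_v|_{c_r = 0} = f_{v'}|_{c_r = 0}$, so the left-hand side has at most $2^{t-1}$ distinct elements. On the other hand, the inductive construction of \ref{5.6.1} yields $|H^r(\textbf{c}_1)| = 2^{r-1}$ and $|H^{t-r+1}(\textbf{c}_2)| = 2^{t-r}$, whence $|\mathscr H(\textbf{c})_r| \le 2^{r-1} \cdot 2^{t-r} = 2^{t-1}$.

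Finally, I would establish the inclusion $\mathscr H(\textbf{c})_r \subseteq \{f|_{c_r = 0} : f \in \mathscr H(\textbf{c})\}$ by an explicit construction. For each pair $([\mathscr T_1], [\mathscr T_2]) \in H^r(\textbf{c}_1) \times H^{t-r+1}(\textbf{c}_2)$, I produce $[\mathscr T] \in H^{t+1}(\textbf{c})$ by merging suitable complete representatives at column $C_r$ in the spirit of Lemma \ref{3.2}, after shifting the column indices of $\mathscr T_2$ so that it occupies columns $C_{r+1}, \ldots, C_{t+1}$. The partial order assigned to the resulting $\mathscr T$ is the disjoint union of those of $[\mathscr T_1]$ and $[\mathscr T_2]$; since neither involves $c_r$, this union remains compatible with $\textbf{c}$, so $[\mathscr T] \in H^{t+1}(\textbf{c})$. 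The row-by-row formula of \ref{5.3} then expresses $f_{[\mathscr T]}$ as $f_{[\mathscr T_1]} + f_{[\mathscr T_2]}$ plus remainder terms arising only from rows whose leftmost block lies in a column of index $\le r$ and rightmost block in a column of index $\ge r+1$; by inspection of the formula these remainder terms are multiples of $c_r$, hence vanish at $c_r = 0$. Combined with the cardinality bounds this forces the equality.

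The principal obstacle is the last step: specifying the merging precisely enough so that every pair is realized exactly once and that the cross-boundary row contributions indeed carry $c_r$ as a factor. The construction requires careful bookkeeping of heights at $C_r$ (to ensure both boundary conditions hold after merging) and of the particular consecutive-column pairs produced by the formula of \ref{5.3}, analogous to but more delicate than the bijection established in Lemma \ref{3.2}.
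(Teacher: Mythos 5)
Your opening observation is correct and is essentially the ``erasing edges'' picture in the Remark following the lemma: since $c_r$ is the \emph{smallest} element of $\textbf{c}$, the new edges introduced at each stage of the construction of \ref{5.6.1} carry the \emph{largest} label $s\neq r$, so the label-$r$ edges of $\mathscr G(\textbf{c})$ are the disjoint union of the two perfect matchings supplied by the induction hypothesis in $\mathscr G^{\pm}$, and by (P3) each matched pair collapses under $c_r=0$. Nevertheless the proof has two genuine gaps. First, the concluding step is logically invalid: from $\mathscr H(\textbf{c})_r\subseteq\{f|_{c_r=0}\}$ together with the two \emph{upper} bounds $|\mathscr H(\textbf{c})_r|\le 2^{t-1}$ and $|\{f|_{c_r=0}\}|\le 2^{t-1}$ one cannot conclude equality; you need the \emph{lower} bound $|\mathscr H(\textbf{c})_r|=2^{t-1}$, i.e.\ that the $2^{t-1}$ functions $h+f_{[\mathscr T_1]}+f_{[\mathscr T_2]}$ are pairwise distinct. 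That separation holds when the $c_i$, $i\neq r$, are non-zero and pairwise distinct (the two factors involve disjoint sets of the $r^i$, so one can invoke Lemma \ref{5.3} on each), but the lemma is explicitly intended to be applied when several coefficients coincide or vanish (see Remark (1) after it and Lemma \ref{5.8.2}); in that regime both sides may have fewer than $2^{t-1}$ elements and the squeeze collapses, whereas the set equality still has to be proved. Second, the one substantive containment in your scheme, $\mathscr H(\textbf{c})_r\subseteq\{f|_{c_r=0}\}$, is not actually established: you assert without verification that the partial order assigned to the merged tableau is the disjoint union of the two partial orders (one must rule out cross relations between $\textbf{c}_1$ and $\textbf{c}_2$ and relations of the form $c_i\le c_r$ with $i\neq r$), and that the rows straddling column $C_r$ contribute only multiples of $c_r$ to $f_{[\mathscr T]}$; you yourself flag this as ``the principal obstacle''.

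For comparison, the paper's proof inducts on the \emph{largest} element $c_s$ rather than the smallest: both $\{f|_{c_r=0}\}$ and $\mathscr H(\textbf{c})_r$ split into two halves (corresponding to $\mathscr G^{\pm}$ on one side, and to applying the construction of \ref{5.6.1} inside whichever of $\textbf{c}_1,\textbf{c}_2$ contains $c_s$ on the other) whose members differ by the same increment $c_s(r^{s+1}-r^s)$, which does not involve $c_r$; the halves then agree by the induction hypothesis, so no counting and no merging of diagrams is required. If you want to salvage your route, the honest fix is to replace the cardinality squeeze by a proof of the reverse inclusion, which in practice pushes you back to an inductive argument of the paper's type.
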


\begin{proof} The proof is by induction on $t$. The assertion is trivial if $t=1$. Let $c_s$ be the unique largest element of $\textbf{c}$.  We can assume that $s\neq r$ for otherwise $t=1$.

Recall the notation and construction of \ref{5.6.1}. Recall Remark \ref{5.6.5} that the vertex $v_h$ belongs to $\mathscr G^+$.   By the induction hypothesis we can assume that the lemma holds for the subgraph $\mathscr G^+$.  On the other hand for all $v \in V(\mathscr G^+)$ we have
$$f_v-f_{\varphi(v)}=c_s(r^{s+1}-r^s). \eqno {(*)}$$

It remains to show that the same formula applies to the elements in $\mathscr H(\textbf{c})_r$ in passing from the subset corresponding to $\mathscr G^+$ to the subset corresponding to $\mathscr G^-$.

Suppose $s<r$. Then $c_s$ is the maximal element in ${\bf c_1}$ and we may apply the construction of \ref{5.6.1} to obtain $\mathscr G({\bf c_1})$ from $\mathscr G({\bf c'_1})$, where $\textbf{c}'_1={\bf c_1}\setminus\{c_s\}$.  In this functions are changed in the manner described by $(*)$.  Similarly if $s>r$, then $c_s$ is the maximal element in ${\bf c_2}$ and we may apply the construction of \ref {5.6.1} to obtain $\mathscr G({\bf c_2})$ from $\mathscr G(\textbf{c}'_2)$, where ${\bf c'_2}={\bf c_2}\setminus\{c_s\}$.  In this functions are again changed in the manner described by $(*)$.

This proves the required assertion and hence the lemma.
\end{proof}

\noindent \textbf{Remarks.}
\begin{enumerate}
\item If several of the coefficients are zero then the lemma means that the functions defined by an $S$-graph can be obtained by applying our construction sequentially to the maximal connected subsets of $T$ in which all coefficients are non-zero (hence positive).  This has some theoretical advantages.  However the main advantage is practical since instead of obtaining $2^{|T|}$ functions in which one sets several $c_i$ equal to zero, one only obtains $2^{|T'|}$ functions where $T'$ is the subset of $T$ indexing the non-zero coefficients.
\item {\it Erasing Edges.} The graph theoretic interpretation of setting the smallest non-zero element $c_r$ of $\textbf{c}$ equal to zero is to erase the edges with label $r$. Of course the resulting graph need not be connected violating (P5) as well as (P4).  This means that we cannot use (P3) to compute the function attached to a given vertex. In the lemma above we made use of the specifics of $\mathscr G(\textbf{c})$ to recover the functions obtained by setting $c_r=0$.  It is not too obvious how to do this for an arbitrary $S$-graph and in particular how to recover (P7). Finally some of the components may be isomorphic leading to unnecessary repetitions.
\end{enumerate}

\subsubsection{}\label{5.8.2}
The second operation when nearest neighbours $c_r<c_s$ in a linear order $\textbf{c}$ are set equal is taken care of by the following Lemma.  In this we can assume by Lemma \ref{5.8.1} that the elements of $\textbf{c}$ are all strictly positive.  Moreover via induction on $t$ we may assume that the nearest neighbours to be set equal are the largest ones.  On the other hand at a previous step further pairs may have been set equal to zero and so we should not assume that the elements of $\textbf{c}$ are all strictly increasing.

\begin{lemma}  Suppose $i_{t-1}=i_t+1$,  Then the labelled graphs $\mathscr G_1:=\mathscr G(c_{i_1}\leq c_{i_2}\leq \ldots \leq c_{i_{t-2}}< c_{i_{t-1}} < c_{i_t})$ and $\mathscr G_2:=\mathscr G(c_{i_1}\leq c_{i_2}\leq \ldots\leq c_{i_{t-2}}< c_{i_t}< c_{i_{t-1}})$ are distinct.  They degenerate to a common graph $\mathscr G_c$ when $c_{i_{t-1}},c_{i_t}$ become equal to a common value $c$.
\end{lemma}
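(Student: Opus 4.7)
The plan is to separate the two assertions. I begin by establishing a common substructure, then handle distinctness via a label-count and finally treat degeneration via the inductive construction.

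Write $r := i_t$, $s := i_{t-1} = r+1$. Let ${\bf c_1'}$ (respectively ${\bf c_2'}$) be obtained from ${\bf c_1}$ (respectively ${\bf c_2}$) by removing its maximum and applying the relabeling of \ref{5.6.1}. Because $s = r+1$, the remaining indices $i_j$ (with $j \le t-2$) are transformed identically by both recipes, and the unique surviving member of $\{c_r, c_s\}$ receives label $r$ in either case. So ${\bf c_1'} = {\bf c_2'}$ as labelled linear orders, and $G := \mathscr G({\bf c_1'}) = \mathscr G({\bf c_2'})$ is a common $S$-graph of order $t-1$ from which both $\mathscr G_1, \mathscr G_2$ are built.

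For distinctness I count vertices of label $r$. The shift in the construction of $\mathscr G_1^+$ moves labels $\ge r$ up by one, so label-$r$ vertices of $G$ acquire label $r+1$ in $\mathscr G_1^+$; the isomorphism $\varphi$ then swaps $r+1 \leftrightarrow r$, producing label $r$ in $\mathscr G_1^-$ only. By contrast, the shift for $\mathscr G_2^+$ moves only labels $> r$, so label-$r$ vertices of $G$ keep label $r$ in $\mathscr G_2^+$, and the $\varphi$ for $\mathscr G_2$ (which swaps $r+2 \leftrightarrow r+1$) leaves them untouched in $\mathscr G_2^-$. Hence $|V(\mathscr G_1)^r| = |V(G)^r|$ while $|V(\mathscr G_2)^r| = 2|V(G)^r|$. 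The pointed chain (property (P4)) gives $|V(G)^r| \ge 1$, so the labelled vertex-count profiles differ and the graphs are distinct.

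For the degeneration I aim to prove the set equality
\[\{f|_{c_r=c_s=c} : f \in \mathscr H({\bf c_1})\} = \{f|_{c_r=c_s=c} : f \in \mathscr H({\bf c_2})\}\]
by induction on $t$, with base case $t = 2$ checked directly from \ref{5.6.2}. For the inductive step each $\mathscr G_j$ decomposes as $\mathscr G_j^+ \cup \mathscr G_j^-$; the functions on $\mathscr G_j^+$ come from those of $G$ by pullback under the relabeling, while those on $\mathscr G_j^-$ are obtained by propagation through the connecting edges of label $r$ (for $\mathscr G_1$) or $s$ (for $\mathscr G_2$) via property (P3). At $c_r = c_s = c$ these connecting-edge shifts become $c(r^{r+1} - r^r)$ for $\mathscr G_1$ and $c(r^{r+2} - r^{r+1})$ for $\mathscr G_2$, so pointwise vertex matching between the two graphs fails. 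Writing each function as $f_v = \sum_{i \in T} c_i \beta_i(v)$, the degeneration depends only on the tuple $((\beta_i(v))_{i \ne r, s},\, \beta_r(v) + \beta_s(v))$. The plan is to organise the vertices by their recursively defined signatures $S \subseteq T$ and to verify, by case analysis on whether $r$ or $s$ lies in $S$, that the multiset of such tuples realised on $V(\mathscr G_1)$ equals that realised on $V(\mathscr G_2)$, whence the set equality.

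The principal obstacle is the degeneration step. The two graphs have different vertex labels and different connecting edges, so the comparison of functions after the limit must reconcile these differences. Typically, pairs of vertices in $\mathscr G_1$ whose signatures differ by $\{r\}$ collapse to a single function at $c_r = c_s = c$ (and similarly in $\mathscr G_2$ for signatures differing by $\{s\}$), and one must check that the surviving distinct functions on each side coincide. The cleanest organising principle is expected to be the exchange $r \leftrightarrow s$ acting on signatures, combined with the inductive hypothesis applied recursively to the smaller $S$-graphs appearing inside the construction.
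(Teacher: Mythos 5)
Your setup is sound: the observation that removing the maximum from either order yields the same relabelled order $\mathbf{c}_1'=\mathbf{c}_2'$, hence a common $S$-graph $G$ of order $t-1$, agrees with the paper, and your distinctness argument via vertex label multiplicities ($|V(\mathscr G_1)^r|=|V(G)^r|$ versus $|V(\mathscr G_2)^r|=2|V(G)^r|$, nonzero by the pointed chain) is a legitimate and in fact more explicit justification than the paper's ``one checks that \dots they are distinct as labelled graphs.''

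The degeneration step, however, is where the actual content of the lemma lies, and there you have a genuine gap: what you offer is a plan (``the plan is to organise the vertices by \dots and to verify \dots''; ``the cleanest organising principle is expected to be \dots''), not a verification. Moreover the one concrete claim you do make about the collapse appears to be wrong: vertices whose signatures differ by $\{r\}$ alone have $f_v-f_{v'}=\pm c_r(r^a-r^b)$, which does \emph{not} vanish at $c_r=c_s=c\neq 0$. What actually collapses are the endpoints $a,d$ of an $(r<s)$- or $(s<r)$-triad, for which $f_a-f_d=(c_r-c_s)(r^{k_b}-r^{k_c})$ by triadicity; this is the ``collapsing triads to triangles'' phenomenon. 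The paper's proof resolves the mismatch you correctly flag (the connecting-edge shifts $c(r^{r+1}-r^r)$ versus $c(r^{r+2}-r^{r+1})$) by descending one level further, to the order-$(t-2)$ graph $\mathscr G(\mathbf{c}'')$: it exhibits the target $\mathscr G_c$ explicitly as three relabelled copies $\hat{\mathscr G}_1,\hat{\mathscr G}_2,\hat{\mathscr G}_3$ of $\mathscr G(\mathbf{c}'')$ with the three vertices over each $v\in V(\mathscr G(\mathbf{c}''))^r$ formed into a triangle, and then shows that each of $\mathscr G_1,\mathscr G_2$ (each a union of four copies of $\mathscr G(\mathbf{c}'')$) degenerates to $\mathscr G_c$ by identifying a specific pair of those copies through $\varphi_{r,s}$ (resp.\ $\varphi_{s,r}$). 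Your argument never exhibits $\mathscr G_c$, never descends to the order-$(t-2)$ level, and does not identify which vertices are glued, so the assertion that the two degenerations coincide remains unproved. To complete your route you would need to replace the signature bookkeeping by the triad computation above and then match the surviving function values on both sides, which in effect reconstructs the paper's three-copy decomposition.
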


\begin{proof} Set $i_{t-1}=s$ and $i_t=r$.  Then $s=r+1$.  We follow the construction of \ref {5.6.1}.

Set $\textbf{c}''=\textbf{c}\setminus \{c_r,c_s\}$ with the following change of labelling.  Leave the labels in $[1,r-1]$ unchanged and decrease the labels in $[s+1,t]$ by $2$.  Then $\mathscr G:=\mathscr G(\textbf{c}'')$ is the subgraph of $\mathscr G_{t}$ defined by $\textbf{c}''$ with its induced linear order.

Set $\textbf{c}'=\textbf{c}\setminus \{c_s\}$ with the following change of labelling.  Leave the labels in $[1,s-1]$ unchanged and decrease the labels in $[s+1,t]$ by $1$.  Then $\mathscr G':=\mathscr G(\textbf{c}')$ is the subgraph of $\mathscr G_{t}$ defined by $\textbf{c}'$ with its induced linear order. Notice that since $s=r+1$, repeating this construction gives $\mathscr G$.

Now let $\mathscr G^+_r$ (resp. $\mathscr G^+_s$) be the graph obtained from $\mathscr G'$ by changing the labels on edges and on vertices in the following manner. The labels on $[1,r]$ (resp. $[1,r-1]$) are left unchanged and the labels on $[r+1,t]$ (resp. $[r,t]$) are increased by $1$.  In $\mathscr G^+_r$ (resp. $\mathscr G^+_s$) there are no vertices nor edges labelled by $r+1=s$ (resp. $r$).  Moreover there is unlabelled graph isomorphism $\varphi_r:\mathscr G' \iso \mathscr G^+_r$ (resp. $\varphi_s:\mathscr G' \iso \mathscr G^+_s$).

Now define $\mathscr G^-_r$ (resp. $\mathscr G^-_s$) isomorphic to $\mathscr G^+_r$ (resp. $\mathscr G^+_s$) as an unlabelled graph.  Let $\varphi_{r,s}:\mathscr G^+_r \iso \mathscr G^-_r$ (resp. $\varphi_{s,r}:\mathscr G^+_s \iso \mathscr G^-_s$) be the resulting unlabelled graph isomorphism.  Then $\varphi_{r,s}$ (resp. $\varphi_{r,s}$) is deemed to leave all labels fixed except to decrease the label on a vertex with label $s+1$ (resp. $r+1$) by $1$.

Let $\mathscr G_{r,s}$ (resp. $\mathscr G_{s,r}$)  be the union of the graphs $\mathscr G_r^+,\mathscr G_r^-$ (resp. $\mathscr G_s^+,\mathscr G_s^-$) in which a vertex $v$ with label $s+1$ (resp. $r+1$) is joined to the vertex $\varphi_{r,s}(v)$ (which has label $s$ (resp. $r$)) with an edge having label $s$ (resp. $r$).

One checks that $\mathscr G_{r,s}=\mathscr G_2$ (resp. $\mathscr G_{s,r}=\mathscr G_1$) and that they are distinct as labelled graphs.

Now define $\hat{\mathscr G_1},\hat{\mathscr G_2},\hat{\mathscr G_3}$ isomorphic to $\mathscr G$ as unlabelled graphs. Define the unlabelled graph isomorphisms $\varphi_i:\mathscr G\iso \hat{\mathscr G_i}$, which leave the labels in $[1,r-1]$ on edges and vertices unchanged, increase the labels in $[r,t-2]$ (resp. in $[r+1,t-1]$) on edges (resp. vertices) by two and increase the label $r$ on vertices by $i-1$. In particular, these three graphs do not have edges with label $r, s=r+1$.

Now assume that $c_r,c_s$ coalesce to some common value $c$.  Let $\mathscr G_c$ denote the graph which is the union of $\hat{\mathscr G_1},\hat{\mathscr G_2},\hat{\mathscr G_3}$ in which for each $v \in V(\mathscr G)^r$, the vertices $\varphi_i(v),\, i=1,2,3$ are formed into a triangle with edges labelled by $r$ (or $s$).

Now consider a vertex $v \in V(\mathscr G^+_r)^r$.  Then $\varphi_s(\varphi_r^{-1}(v)) \in V(\mathscr G^+_s)^s$.  Moreover $v$ belongs to the subgraph $\mathscr G$ of $\mathscr G^+_r$. Its image under $\varphi_{r,s}$ is a second copy of $\mathscr G$.  When $c_r,c_s$ coalesce to $c$ we may replace the labels $r$ and $s$ on the edges of $\mathscr G_{r,s}$ by either $r$ or $s$ and identify the above two copies of $\mathscr G$ through $\varphi_{r,s}$.  One checks that the resulting graph is $\mathscr G_c$.

Similarly consider a vertex $v \in V(\mathscr G^+_s)^{s+1}$.  Then $\varphi_r(\varphi_s^{-1}(v)) \in V(\mathscr G^+_r)^{s+1}$. Moreover $v$ belongs to the subgraph $\mathscr G$ of $\mathscr G^+_s$. Its image under $\varphi_{s,r}$ is a second copy of $\mathscr G$.  When $c_r,c_s$ coalesce to $c$ we may replace the labels $r$ and $s$ on the edges of $\mathscr G_{s,r}$ by either $r$ or $s$ and identify the above two copies of $\mathscr G$ through $\varphi_{s,r}$.  One checks that the resulting graph is $\mathscr G_c$.

This completes the proof of the lemma.
\end{proof}

\noindent {\bf Remark.} {\it Collapsing Triads to Triangles.} When the elements of $\textbf{c}$ are all non-zero, then putting a non-equal pair $c_r,\, c_s$ equal to some common coefficient $c$ has the graph theoretic interpretation of collapsing triads to triangles.

To describe this suppose that $c_r<c_s$.  Then every $(r<s)$-triad $(a,b,c,d)$ in the original $S$-graph is collapsed to a triangle in which the vertices $a$ and $d$ (which have common label) are identified and the edges in the resulting triangles are all replaced by either $r$ or $s$.  However this is \textit{not} the end of the story because further vertices and edges must be identified.  Indeed the vertices and edges joined to the two identified vertices lying in the corresponding copies of $\mathscr G$ (as noted in the last part of the proof of Lemma \ref {1.5}) are identified.

It is not clear if this process of collapsing triads to triangles can be applied to an arbitrary $S$-graph (that is to say not some $\mathscr G(\textbf{c})$).  It is not even clear that if $c_r<c_s$ and are adjacent in the linear order that the corresponding $S$-graph will admit an $(r<s)$-triad.

One may remark that when the coefficients all become equal to some $c>0$ then all the $t$-hypercubes degenerate to $t$ simplexes ($t+1$ vertices).  However sometimes it is advantageous to consider a $t$-hypercube as a disjoint union of directed trees rooted at the extremal elements (comprising the set $H^{t+1}_{t+1}$).  Then this disjoint union degenerates to the unbranched tree with $t+1$ vertices.\\

\noindent {\bf Example 1.} Let us see how the graphs $\mathscr G(c_1<c_2<c_3)$ and $\mathscr G(c_1<c_3<c_2)$ of order $3$ degenerate when we set $c_2=c_3$. Here we take $r=2,\, s=3$ and we follow the notations of the proof of the previous lemma. The graph $\mathscr G$ is the graph of order $1$ $\mathscr G:=\mathscr G(c_1): \xy \morphism(0,0)|a|/{-}/<400,0>[1`2;1]\endxy$. Then the three graphs $\hat{\mathscr G}_i,\, i=1, 2, 3$ are the following:
$\hat{\mathscr G}_1: \xy \morphism(0,0)|a|/{-}/<400,0>[1`2;1]\endxy$, $\hat{\mathscr G}_2: \xy \morphism(0,0)|a|/{-}/<400,0>[1`3;1]\endxy$ and finally $\hat{\mathscr G}_3: \xy \morphism(0,0)|a|/{-}/<400,0>[1`4;1]\endxy$. Hence $\mathscr G_c$ is :
$$\bfig
\node a(0,0)[3]
\node b(400, 0)[4]
\node c(200,200)[2]
\node A(-200,-200)[1]
\node B(600,-200)[1]
\node C(200,400)[1]
\arrow|b|/-/[a`b;2]
\arrow|a|/-/[a`c;2]
\arrow|r|/-/[b`c;2]
\arrow|a|/-/[A`a;1]
\arrow|r|/-/[c`C;1]
\arrow|a|/-/[b`B;1]
\efig$$
On the other hand, again following the notations of the previous lemma, $$\mathscr G'=\mathscr G(c_1<c_2): \xy \morphism(0,0)|a|/{-}/<400,0>[1`3;1]\morphism(0,-400)|b|/{-}/<400,0>[1`2;1]\morphism(400,-400)|r|/{-}/<0,400>[2`3;2]\endxy$$
hence the graphs $\mathscr G^+_2, \mathscr G^-_2, \mathscr G^+_3, \mathscr G^-_3$ are respectively:\\
$\mathscr G^+_2:\xy \morphism(0,0)|a|/{-}/<400,0>[1`4;1]\morphism(0,-400)|b|/{-}/<400,0>[1`2;1]\morphism(400,-400)|r|/{-}/<0,400>[2`4;2]\endxy$
$\mathscr G^-_2: \xy \morphism(0,0)|a|/{-}/<400,0>[1`3;1]\morphism(0,-400)|b|/{-}/<400,0>[1`2;1]\morphism(400,-400)|r|/{-}/<0,400>[2`3;2]\endxy$
$\mathscr G^+_3: \xy \morphism(0,0)|a|/{-}/<400,0>[1`4;1]\morphism(0,-400)|b|/{-}/<400,0>[1`3;1]\morphism(400,-400)|r|/{-}/<0,400>[3`4;3]\endxy$
$\mathscr G^-_3: \xy \morphism(0,0)|a|/{-}/<400,0>[1`4;1]\morphism(0,-400)|b|/{-}/<400,0>[1`2;1]\morphism(400,-400)|r|/{-}/<0,400>[2`4;3]\endxy$

Then the graph $\mathscr G_{2, 3}$ obtains by joining the vertex with label $4$ of the first graph with the vertex with label $3$ of the second graph with an edge of label $3$, i.e. it is the graph :

$$\bfig
\node a(0,0)[1]
\node b(400, 0)[4]
\node c(400,-400)[2]
\node d(0, -400)[1]
\node A(1200,0)[1]
\node B(800,0)[3]
\node C(800,-400)[2]
\node D(1200,-400)[1]
\arrow|a|/-/[a`b;1]
\arrow|b|/-/[d`c;1]
\arrow|l|/-/[b`c;2]
\arrow|a|/-/[A`B;1]
\arrow|b|/-/[D`C;1]
\arrow|l|/-/[C`B;2]
\arrow|a|/-/[b`B;3]
\efig$$

Similarly, $\mathscr G_{3, 2}$ obtains from $\mathscr G^+_3, \mathscr G^-_3$ as follows:
$$\bfig
\node a(0,0)[1]
\node b(400, 0)[4]
\node c(400,-400)[3]
\node d(0, -400)[1]
\node A(1200,0)[1]
\node B(800,0)[4]
\node C(800,-400)[2]
\node D(1200,-400)[1]
\arrow|a|/-/[a`b;1]
\arrow|b|/-/[d`c;1]
\arrow|l|/-/[b`c;3]
\arrow|a|/-/[A`B;1]
\arrow|b|/-/[D`C;1]
\arrow|l|/-/[C`B;3]
\arrow|a|/-/[c`C;2]
\efig$$

Then, the two graphs $\mathscr G_{2, 3}, \mathscr G_{3, 2}$ above degenerate to $\mathscr G_c$ when we set $c_2=c_3=c$; the $(2<3)$-triad (resp. $(3<2)$-triad) of $\mathscr G_{2, 3}$ (resp. $\mathscr G_{3, 2}$) degenerates to a triangle by identifying the two vertices with label $2$ (resp. $4$) and by identifying the edges of label $1$ that emanate from the identified vertices.

If we further set $c_1=c=c_2=c_3$, then the graph $\mathscr G_c$ becomes the $3$-simplex :

$$\bfig
\node a(0,0)[3]
\node b(600, 0)[4]
\node c(300,300)[2]
\node A(300,600)[1]
\arrow|b|/-/[a`b;]
\arrow|b|/-/[a`c;]
\arrow|l|/-/[b`c;]
\arrow|l|/-/[A`a;]
\arrow|r|/-/[c`A;]
\arrow|r|/-/[b`A;]
\efig$$

By contrast, recall that the graphs $\mathscr G_1:=\mathscr G(c_2<c_3<c_1)$ and $\mathscr G_2:=\mathscr G(c_2<c_1<c_3)$ are equal and given by the octagon of Section \ref{5.6.2}. By setting $c_2=c_3=c$ the octagon degenerates to the following graph (although for $\mathscr G_2$ this would force $c_1=c$):
$$\bfig
\node a(0,0)[4]
\node b(400, 0)[3]
\node c(200,200)[1]
\node A(0,600)[4]
\node B(400,600)[3]
\node C(200,400)[2]
\arrow|b|/-/[a`b;2]
\arrow|a|/-/[a`c;2]
\arrow|r|/-/[b`c;2]
\arrow|a|/-/[A`B;2]
\arrow|l|/-/[A`C;2]
\arrow|r|/-/[C`B;2]
\arrow|a|/-/[C`c;1]
\efig$$
If we further set $c_1=c$ in the above graph, then we again obtain the $3$-simplex.\\

\noindent {\bf Example 2.} Let us see how the graphs of order 4, $\mathscr G_1:=\mathscr G(c_1<c_4<c_2<c_3)$ and $\mathscr G_2:=\mathscr G(c_1<c_4<c_3<c_2)$, degenerate when we set $c_2=c_3=c$. With the notation of the lemma, we have $r=2,\, s=3,\, \mathscr G=\mathscr G(c_1<c_2): \xy \morphism(0,0)|a|/{-}/<400,0>[1`3;1]\morphism(0,-400)|b|/{-}/<400,0>[1`2;1]\morphism(400,-400)|r|/{-}/<0,400>[2`3;2]\endxy$. Then we have
$\hat{\mathscr G}_1:\xy \morphism(0,0)|a|/{-}/<400,0>[1`5;1]\morphism(0,-400)|b|/{-}/<400,0>[1`2;1]\morphism(400,-400)|r|/{-}/<0,400>[2`5;4]\endxy$
$\hat{\mathscr G}_2: \xy \morphism(0,0)|a|/{-}/<400,0>[1`5;1]\morphism(0,-400)|b|/{-}/<400,0>[1`3;1]\morphism(400,-400)|r|/{-}/<0,400>[3`5;4]\endxy$
$\hat{\mathscr G}_3: \xy \morphism(0,0)|a|/{-}/<400,0>[1`5;1]\morphism(0,-400)|b|/{-}/<400,0>[1`4;1]\morphism(400,-400)|r|/{-}/<0,400>[4`5;4]\endxy$

Hence, the graph $\mathscr G_c$ is as follows:
$$\bfig
\node a(0,0)[3]
\node b(400, 0)[4]
\node c(200,200)[2]
\node A(-200,-200)[1]
\node A'(-200,0)[5]
\node A''(-400,-200)[1]
\node B(600,-200)[1]
\node B'(600,0)[5]
\node B''(800,-200)[1]
\node C(400,400)[1]
\node C'(0,400)[5]
\node C''(200,600)[1]
\arrow|b|/-/[a`b;2]
\arrow|a|/-/[a`c;2]
\arrow|r|/-/[b`c;2]
\arrow|r|/-/[A`a;1]
\arrow|r|/-/[c`C;1]
\arrow|l|/-/[b`B;1]
\arrow|a|/-/[A'`a;4]
\arrow|r|/-/[c`C';4]
\arrow|a|/-/[b`B';4]
\arrow|a|/-/[A'`A'';1]
\arrow|l|/-/[C'`C'';1]
\arrow|a|/-/[B'`B'';1]
\efig$$

The graphs $\mathscr G_1,\, \mathscr G_2$ are given below; they degenerate to $\mathscr G_c$ by identifying the two vertices with label 2 (resp. 4) in the $(2<3)$ (resp. $(3<2)$)-triad and identifying the edges emanating from the identified vertices:

$$\mathscr G_1: \bfig
\node a(0,0)[1]
\node b(400, 0)[1]
\node c(800,0)[1]
\node d(1200,0)[1]
\node A(0,400)[5]
\node B(400, 400)[4]
\node C(800,400)[3]
\node D(1200,400)[5]
\node A'(0,800)[5]
\node B'(400, 800)[2]
\node C'(800,800)[2]
\node D'(1200,800)[5]
\node A''(0,1200)[1]
\node B''(400, 1200)[1]
\node C''(800,1200)[1]
\node D''(1200,1200)[1]
\arrow|l|/-/[A`a;1]
\arrow|l|/-/[b`B;1]
\arrow|l|/-/[c`C;1]
\arrow|l|/-/[d`D;1]
\arrow|b|/-/[A`B;4]
\arrow|b|/-/[B`C;3]
\arrow|b|/-/[C`D;4]
\arrow|l|/-/[B`B';2]
\arrow|r|/-/[C`C';2]
\arrow|a|/-/[A'`B';4]
\arrow|a|/-/[C'`D';4]
\arrow|l|/-/[A'`A'';1]
\arrow|l|/-/[B'`B'';1]
\arrow|l|/-/[C'`C'';1]
\arrow|l|/-/[D'`D'';1]
\efig$$

$$\mathscr G_2: \bfig
\node a(0,0)[1]
\node b(400, 0)[1]
\node c(800,0)[1]
\node d(1200,0)[1]
\node A(0,400)[5]
\node B(400, 400)[4]
\node C(800,400)[4]
\node D(1200,400)[5]
\node A'(0,800)[5]
\node B'(400, 800)[2]
\node C'(800,800)[3]
\node D'(1200,800)[5]
\node A''(0,1200)[1]
\node B''(400, 1200)[1]
\node C''(800,1200)[1]
\node D''(1200,1200)[1]
\arrow|l|/-/[A`a;1]
\arrow|l|/-/[b`B;1]
\arrow|l|/-/[c`C;1]
\arrow|l|/-/[d`D;1]
\arrow|b|/-/[A`B;4]
\arrow|b|/-/[C`D;4]
\arrow|l|/-/[B`B';3]
\arrow|r|/-/[C`C';3]
\arrow|a|/-/[A'`B';4]
\arrow|a|/-/[B'`C';2]
\arrow|a|/-/[C'`D';4]
\arrow|l|/-/[A'`A'';1]
\arrow|l|/-/[B'`B'';1]
\arrow|l|/-/[C'`C'';1]
\arrow|l|/-/[D'`D'';1]
\efig$$

\subsubsection{Separation.} \label{5.8.3}
One of the properties of the $S$-graph $\mathscr G(\textbf{c})$ was the functions associated to distinct vertices are distinct. This was called property (P9) and was established in \cite[Thm. 7.8(iv)]{J}.
\begin{lemma} Property (P9) holds for the common graph $\mathscr G_c$ in the conclusion of Lemma \ref{5.8.2}.
\end{lemma}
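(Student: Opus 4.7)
The plan is to induct on $t = |T|$. The base case $t = 2$ is immediate: then $\textbf{c}'' = \emptyset$, so $\mathscr G$ is the single-vertex graph and $\mathscr G_c$ is a triangle with vertex labels $\{r, s, r+2\} = \{1, 2, 3\}$ whose three edges are labelled in $\{r, s\}$; the three functions then differ pairwise by nonzero terms $\pm c(r^a - r^b)$ with $a \neq b$, so (P9) holds directly.

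For the inductive step I would use the explicit decomposition from the proof of Lemma \ref{5.8.2}: $V(\mathscr G_c)$ is in bijection with $V(\mathscr G) \times \{1, 2, 3\}$, each $\hat{\mathscr G}_i$ is a relabelled copy of $\mathscr G := \mathscr G(\textbf{c}'')$, and for each $w \in V(\mathscr G)^r$ the three lifts $\varphi_i(w)$ are connected into a triangle whose edges carry labels in $\{r, s\}$. The key structural observation is that $\varphi_i$ sends edge labels of $\mathscr G$ lying in $[r, t-2]$ to $[r+2, t]$ in $\hat{\mathscr G}_i$, so every non-triangle edge of $\mathscr G_c$ carries a label in $[1, r-1] \cup [r+2, t]$, disjoint from $\{r, s\}$. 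By (P3) this yields a clean separation of variables: in any function difference $f_u - f_v$, the coefficient of $c$ receives contributions only from traversed triangle edges, while the coefficient of each $c_i$ with $i \neq r, s$ receives contributions only from non-triangle edges.

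Given distinct $u = (w_1, i), v = (w_2, j)$ I would then split into two cases. If $i = j$, I would take a path in $\hat{\mathscr G}_i$ from $u$ to $v$; then $f_u - f_v$ has no $c$-term, and the induction hypothesis applied to $\mathscr G$ through the label bijection $\varphi_i^{-1}$ shows $f_u \neq f_v$. If $i \neq j$, I would pick any path from $u$ to $v$ and track the copy index $i = i_0, i_1, \ldots, i_n = j$, which changes only at triangle edges. The $c$-coefficient of $f_u - f_v$ equals $\sum_p (r^{r+i_{p-1}-1} - r^{r+i_p-1})$, which telescopes to $r^{r+i-1} - r^{r+j-1}$, a nonzero linear form in the $r^k$ since $i \neq j$. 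Hence $f_u \neq f_v$ in every case.

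The main obstacle will be verifying the $c$-coefficient formula in Case B, i.e.\ that the sum over triangle edges really telescopes and is insensitive to the path chosen. Path-independence follows from (P3) applied to each cycle of $\mathscr G_c$; the only new cycles introduced by the triangle construction are the triangles themselves, for which the single consistency check $c[(r^r - r^s) + (r^s - r^{r+2}) + (r^{r+2} - r^r)] = 0$ suffices. A minor subsidiary point is that (P9) for $\mathscr G$ transfers to $\hat{\mathscr G}_i$ under $\varphi_i$, which is immediate since (P9) is invariant under bijective relabelling of $T$ and $\hat T$.
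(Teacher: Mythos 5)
Your proposal is correct and follows essentially the same route as the paper: induction on $t$, the decomposition $V(\mathscr G_c)=\bigsqcup_{i=1}^3 V(\hat{\mathscr G_i})$, within-copy separation from the induction hypothesis (transported by $\varphi_i$), and cross-copy separation from the $c$-coefficient $c(r^{r+i-1}-r^{r+j-1})$ supplied by (P3). The only cosmetic difference is that you obtain that coefficient by telescoping along an arbitrary path, whereas the paper reads it off directly from the pair $v_i$, $\varphi_j\circ\varphi_i^{-1}(v_i)$ and concludes by noting the remaining coefficients $c_{i_1},\ldots,c_{i_{t-2}}$ are strictly less than $c$ --- and, like the paper, your final step tacitly treats $c$ and the smaller coefficients as being in general position, a caveat the paper itself records in the Remark following the lemma.
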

\begin{proof}
A similar argument to that given in \cite[Thm. 7.8(iv)]{J} applies.  It proceeds by induction on $t$.

We can assume that (P9) holds for $\mathscr G$ defined in the proof of Lemma \ref{5.8.2} by the induction hypothesis. On the other hand $V(\mathscr G_c)=\bigsqcup\limits_{i=1}^3 V(\hat{\mathscr G_i})$.  Now take $v_i\in V(\hat{\mathscr G_i})$ and set $v_j:=\varphi_j\circ \varphi_i^{-1}(v_i)\in \hat{\mathscr G_j}$, where $i, j\in \{1, 2, 3\}$.  Then $f_{v_i}-f_{v_j}= c(r^{i_t+i-1}-r^{i_t+j-1})$, by (P3).  Since only the $c_{i_j},\, 1\leq j\leq t-2$ occur as differences $f_v-f_{v'},\, v,v' \in \hat{\mathscr G_k}$, for any $k \in \{1,2,3\}$ and these coefficients are strictly less than $c$, the assertion follows.
\end{proof}
\smallskip

\noindent {\bf Remark.} Strictly speaking both in \cite [Thm. 7.8(iv)]{J} and in the present analysis we are only proving separation of functions with respect to the coefficients being in general position relative to the partial order imposed on $\textbf{c}$. However in the contribution $(c_u-c_v)(r^u-r^v)$ to $f_{[\mathscr T]}$ coming from \cite[5.2$(*)$]{J}, the indices $u,v$ are not again repeated \cite[Lemma 5.2]{J}.  Thus as far as separation is concerned, the non-vanishing of $(c_u-c_v)$ is equivalent to their being in relative general position, that is to say in this difference taking an indeterminate value.

\section{Hypercubes}\label{6}
\subsection{}\label{6.1}
Fix ${\bf c}$ and recalling (P3), let $f_v$ be the function attached to the vertex $v$ of $\mathscr G({\bf c})$.  Set $f^i=(r^i-r^{i+1})$.  Define an order relation on the set $\{f_v,\,|\, v\in V\}$ 
by \[f \geq f', \quad \textrm{if}\quad f-f' \in \sum_i\mathbb N f^i.\]  Recall Remark \ref{5.5}; the driving function $h$ is the unique minimal element and its dual function $h^*$ is the unique maximal element in this set.

Let $j,\, k$ be the label assigned to the vertices $v,\, v' \in V(\mathscr G({\bf c}))$ respectively; if $v,\, v'$ are joined by an edge of label $i$, we adjoin an arrow on this edge pointing from $v'$ to $v$ if $j<k$. In this case
$$f_v=f_{v'}+c_i\sum_{s=j}^{k-1}f^s,$$
and so $f_v \geq f_{v'}$.

We show that $\mathscr G(\textbf{c})$ is a labelled hypercube in a $t$-dimensional Euclidean space $\mathbb E^t$ with $2^t$ vertices, where exactly the edges joining vertices with the same label are missing and where the co-ordinates of $v \in V$ exceed those $v' \in V$ if $f_v \geq f_{v'}$.  Moreover  the vertices $v_h,\, v_{h^*}$ corresponding to $h,\, h^*$ are interchanged by hypercube inversion, equivalently the unique straight line joining them passes through the origin.  We remark that the latter elegantly interprets the fact that $v_h,v_{h^*}$ lie at the opposite ends of the unique pointed chain having $t+1$ vertices and $t$ edges (Remark \ref{5.5}), whilst $h$ (resp. $h^*$) is the unique minimal (resp. maximal) element of  $\{f_v\,|\, v \in V\}$.

The proof is by induction on $t$.  When $t=1$, then $\mathscr G(\textbf{c})$ is reduced to the pointed chain consisting of just two vertices.  Consequently this case is immediate.

\subsection {}\label{6.2}
Let $c_s$ be the unique largest element of $\textbf{c}$ and take $\textbf{c}':=\textbf{c}\setminus\{c_s\}$ with the induced linear order. Set $\mathscr G:=\mathscr G(\textbf{c}')$ and retain the notations of \ref{5.6.1}. Recall also Remark \ref{5.6.5}, that $v_h$ lies in $\mathscr G^+$.

By the induction hypothesis $\mathscr G^+$ is a hypercube in $\mathbb E^{t-1}$ where $v_h$ can be taken to lie in the bottom left hand corner.  Then its image $\mathscr G^-=\varphi(\mathscr G^+)$ is a hypercube in $\mathbb E^{t-1}$ with $\varphi(v_{h^*})$ lying in the top right hand corner, again by the induction hypothesis.
The additional co-ordinate in the one higher dimensional Euclidean space is used to form a hypercube in $\mathbb E^t$ obtained by joining $v,\varphi(v)$, for all $v \in \mathscr G^+$ having label $s+1$ by an edge with label $c_s$.

By the induction hypothesis, $\mathscr G^\pm$ are hypercubes in $\mathbb E^{t-1}$ with edges omitted exactly if they join vertices having the same index.  Moreover the co-ordinates in $\mathscr G^\pm$ of joined edges $v,v'$ increase if $i_v<i_{v'}$.  It remains to note that by the above construction, this property holds with respect to the edges joined by the new co-ordinate.  Moreover it is clear that the vertices corresponding to the ends of the unique pointed chain are just $v_h$ and $\varphi(v_{h^*})$ respectively and are hence interchanged by hypercube inversion.

\subsection {}\label{6.3}
Fix $\textbf{c},\, c_s$ and ${\bf c'}$ as above.  Set $V_t=V(\mathscr G(\textbf{c})), V_{t-1}=V(\mathscr G(\textbf{c}'))$ and let $V_t^i$ (resp. $V_t^i$) denote the set of vertices of $V_t$ (resp. $V_{t-1}$) of label $i$. We have the following inductive formula for their cardinalities, namely
$$|V_t^i|= \left\{\begin{array}{ll}|V^i_{t-1}|,& i\in \{s,s+1\}, \\
   2|V^i_{t-1}|,&\text{otherwise}. \\

\end{array}\right.\eqno {(*)}$$

In addition the number of edges of $\mathscr G(\textbf{c})$  with label $c_s$ equals $|V_t^s|$. This gives the following result.

\begin{lemma}
\
\begin{enumerate}
\item The number of vertices of $\mathscr G(\textbf{c})$ with label $i \in \hat{T}$ is a power of $2$.
\item The number of edges of $\mathscr G(\textbf{c})$ with label $i \in T$ is a power of $2$.
\end{enumerate}
\end{lemma}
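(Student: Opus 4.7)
My plan is to prove both assertions simultaneously by induction on $t=|\mathbf{c}|$. The base case $t=1$ is immediate: $\mathscr G(\mathbf{c})$ is the two-vertex pointed chain $\xy \morphism(0,0)|a|/{-}/<400,0>[1`2;1]\endxy$, so for every relevant label the count equals $1=2^0$.

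For the inductive step, let $c_s$ be the unique largest element of $\mathbf{c}$, set $\mathbf{c}'=\mathbf{c}\setminus\{c_s\}$, and use the decomposition $\mathscr G(\mathbf{c})=\mathscr G^{+}\cup\mathscr G^{-}$ from \ref{5.6.1}, together with the unlabelled graph isomorphism $\varphi:\mathscr G^{+}\iso\mathscr G^{-}$. Part (1) is essentially read off from the formula $(*)$ stated just before the lemma: for each $i\in\hat T$, $|V_t^i|$ equals either $|V_{t-1}^i|$ (when $i\in\{s,s+1\}$) or $2|V_{t-1}^i|$ otherwise, and either operation preserves the property of being a power of $2$, which $|V_{t-1}^i|$ satisfies by the inductive hypothesis applied to $\mathscr G(\mathbf{c}')$.

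For part (2), I would split by edge label. For $i\ne s$, the edges of $\mathscr G(\mathbf{c})$ with label $i$ form the disjoint union of those contained in $\mathscr G^{+}$ and those contained in $\mathscr G^{-}$, since the only edges joining the two pieces are the bridging ones with label $s$. Because $\mathscr G^{\pm}$ are each isomorphic (after the label shift prescribed in \ref{5.6.1}) to $\mathscr G(\mathbf{c}')$, the inductive hypothesis makes each of these two counts a power of $2$, so their sum is also a power of $2$. For the remaining label $i=s$, the bridging edges are in natural bijection (via $\varphi$) with the vertices of label $s+1$ in $\mathscr G^{+}$, whose number equals $|V_t^{s+1}|$ and is therefore a power of $2$ by part (1).

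I do not foresee a genuine obstacle; the argument is bookkeeping around the label shifts in the construction of \ref{5.6.1}. The one place that requires care is confirming that an edge of a given label $i\ne s$ in $\mathscr G(\mathbf{c})$ really lies entirely in one of $\mathscr G^{\pm}$ (so that the two contributions are disjoint and can be added), which follows from the fact that bridging edges in the construction all carry the label $s$.
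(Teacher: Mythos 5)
Your argument is correct and is essentially the paper's own proof: the paper obtains part (1) directly from the inductive formula $(*)$ for $|V_t^i|$ and part (2) from the observation that the bridging edges of label $s$ are counted by $|V_t^s|$ (equivalently $|V_t^{s+1}|$, as you say), everything else being the $\mathscr G^{+}\cup\mathscr G^{-}$ bookkeeping you describe. One phrase to tighten: for $i\neq s$ the sum of the two edge counts from $\mathscr G^{+}$ and $\mathscr G^{-}$ is a power of $2$ not because each summand separately is a power of $2$ (that inference is false in general, e.g.\ $2+4$), but because the two summands are \emph{equal} --- $\varphi$ preserves all edge labels, so both equal the count of the correspondingly relabelled edges in $\mathscr G(\mathbf{c}')$ --- whence the total is twice a power of $2$.
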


\subsection {}\label{6.4}
The powers that occur in the conclusion of Lemma \ref{6.3} can be computed.  The interesting thing is that they are determined by the following canonical sequence.  Let $(k_1,k_2,\ldots,k_t)$ be the unique set of pairwise distinct elements of $T$ chosen so that the sequence $(c_{k_1},c_{k_2},\ldots,c_{k_t})$ is strictly increasing. 
Now for all $i \in T$, set $n(k_i)=k_i-|\{k_j<k_i|j>i\}|$.  Then $(n(k_1),n(k_2),\ldots,n(k_t))$, viewed as an ordered set is the desired canonical sequence. For example, if the $c_i,\, i \in T$ are increasing, that is $k_i=i$, then $n(k_i)=i$.  If the $c_i,\, i\in T$ are decreasing, that is $k_i=t-i+1$, then $n(k_i)=1$, for all $i \in T$. One may also obtain the canonical sequence inductively by removing the rightmost element and decreasing the remaining elements of the set  by $1$ (resp. by $0$) if they are $>$ (resp. $<$) than this rightmost element.

Recall Section \ref{6.3} that the cardinality of $V^k_t$, for all $k \in \hat{T}$ is a power of $2$, which we denote by $p(k)_t$.

Let us show how to compute $\{p(k)_t\}_{k \in \hat{T}}$.  The argument is simply applying \ref{6.3} $(*)$ using induction on $t$. Let $\{p(k)_i\}_{k=1}^{i+1}$ denote the above set defined by the first $i$ terms of the canonical sequence, namely $(n(k_1),n(k_2),\ldots,n(k_i))$.  If $i=1$, then $(p(1)_1, p(2)_1)=(0,0)$ corresponding to the unique labelled hypercube in dimension one which has one vertex with label $1$ and one vertex with label $2$. Then through \ref{6.3} $(*)$, for all $\{i\,|\, 1\le i< t\}$ we obtain
$$p(k)_{i+1}=p(k)_i+1,\, k <n(k_{i+1}),$$
$$p(k)_{i+1}=p(k+1)_{i+1}=p(k)_i,\, k=n(k_{i+1}),$$
$$p(k+1)_{i+1}=p(k)_i+1,\, k>n(k_{i+1}),$$
from which $\{p(k)_t\}_{k \in \hat{T}}$ may be computed inductively.

Now let $E^k_t$ denote the set of edges of $E(\mathscr G(\textbf{c}))$ with label $k \in T$.  Its cardinality is a power of $2$ which we shall denote by $q(k)_t$. Similarly, let $\{q(k)_i\}_{k=1}^{i}$ denote the above set defined by the first $i$ terms of the canonical sequence, namely $(n(k_1),n(k_2),\ldots,n(k_i))$. When $i=1$, this set is $\{0\}$. Then through \ref{6.3} $(*)$, for all $\{i\,|\,1\le i< t\}$, we obtain
$$q(k)_{i+1}=q(k)_i+1,\, k <n(k_{i+1}),$$
$$q(k)_{i+1}=q(k)_i,\, k=n(k_{i+1}),$$
$$q(k)_{i+1}=q(k)_i+1,\, k>n(k_{i+1}).$$

\subsection {}\label{6.5}
One easily checks that the original sequence $(k_1,k_2,\ldots,k_t)$ can be recovered from the canonical sequence $(n(k_1),n(k_2),\ldots,n(k_t))$ by just reversing our previous recipe, that is to say increasing by $1$ (resp. $0$) every number $\geq n(k_{i+1})$ (resp. $<n(k_{i+1})$) of the $i^{th}$ set adjoining $n(k_{i+1})$ to its right.

On the other hand the label multiplicities $\{p(k)_t\}_{k \in \hat{T}}$ and $\{q(k)_t\}_{k \in T}$ given by different canonical sequences can coincide, as shown in the lemma below.

\begin{lemma} Let $(r_1,r_2,\ldots,r_t)$ be a canonical sequence.  If $r_i>r_{i+1}$, for some $i\in \{1, 2, \dots t\}$, the label multiplicities coincide if we replace the successive elements $r_i,\, r_{i+1}$ by $r_{i+1},\, r_i+1$.
\end{lemma}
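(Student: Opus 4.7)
The plan is to reduce the lemma to an elementary commutation identity satisfied by the insertion operations that underlie the recursion of Section \ref{6.4}. For each positive integer $n$, let $I_n$ act on a finite list $(x_1,\ldots,x_m)$ by duplicating the $n$-th entry (leaving its value unchanged) and adding one to every other entry, thereby producing a list of length $m+1$. By inspection this is precisely the $p$-multiplicity update triggered by adjoining the term $n$ to the canonical sequence. The corresponding $q$-multiplicity update has an essentially identical combinatorial shape; the only difference is that the value placed in the newly inserted slot is read off from the $p$-value at the previous step rather than being a duplicate of an existing $q$-entry. Hence it will suffice to establish the commutation for the $I_n$'s and then observe that the agreement of the $p$-lists forces the agreement of the inserted values in the $q$-lists.

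With $a=r_{i+1}$ and $b=r_i$ and $a<b$, the lemma for $p$ amounts to the identity
\[
I_a\circ I_b \;=\; I_{b+1}\circ I_a.
\]
I would prove this by direct substitution. Writing $\vec x=(x_1,\ldots,x_m)$, partition the positions $k$ of the output list (of length $m+2$) into five regions: $k<a$, $k\in\{a,a+1\}$, $a+1<k\le b$, $k\in\{b+1,b+2\}$, and $k>b+2$. On each region both compositions evaluate to the same expression, respectively $x_k+2$, $x_a+1$, $x_{k-1}+2$, $x_b+1$, and $x_{k-2}+2$. Identical region analysis applies to the $q$-recursion: in the two singleton regions the inserted values are dictated by the $p$-multiplicities at positions $a$ and $b$ of the intermediate lists, and since those $p$-lists agree by the identity just established, the $q$-values match on both sides as well.

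With the commutation in hand the lemma is immediate: the first $i-2$ operations $I_{r_2},\ldots,I_{r_{i-1}}$ are identical in the two sequences; the operations at positions $i$ and $i+1$ differ precisely by the commutation proved above, since $(r_i,r_{i+1})=(b,a)$ in the original and $(r_{i+1},r_i+1)=(a,b+1)$ after the swap; and the subsequent operations $I_{r_{i+2}},\ldots,I_{r_t}$ are again unchanged. Therefore the final lists $\{p(k)_t\}_{k\in\hat T}$ and $\{q(k)_t\}_{k\in T}$ are the same for the two canonical sequences.

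The main obstacle is purely the index bookkeeping in the commutation verification: each application of $I$ lengthens the list by one and shifts every index past the insertion point, so one must track carefully what an original entry $x_j$ has become at each intermediate step. Once the five regions are correctly identified, however, both compositions reduce to the same term-by-term substitutions and the identity falls out with no further work.
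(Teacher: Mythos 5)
Your proof is correct and takes essentially the same route as the paper's: both verify, by the same five-region case analysis of the two-step composition of the multiplicity updates of Section \ref{6.4}, that replacing $(r_i,r_{i+1})$ by $(r_{i+1},r_i+1)$ produces identical lists, with the prefix and suffix operations unaffected. Your packaging of the update as an insertion operator $I_n$ and of the claim as the commutation $I_a\circ I_b=I_{b+1}\circ I_a$ (together with the correct observation that the newly inserted $q$-entry is read off from the current $p$-list) is just a tidier phrasing of the paper's direct computation.
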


\begin{proof}
Denote by $\{p(k)_t\}_{k\in \hat T}$ the vertex label multiplicities corresponding to the original sequence $(\dots, r_i, r_{i+1}, \dots)$ and by $\{p'(k)_t\}_{k\in \hat T}$ the labels corresponding to the sequence $(\dots, r_{i+1}, r_i+1, \dots)$. One has
$$p(k)_i=p(k)_{i-1}+1,\, k <r_i,$$
$$p(k)_i=p(k+1)_i=p(k)_{i-1},\, k=r_i,$$
$$p(k+1)_i=p(k)_{i-1}+1,\, k>r_i,$$
and
$$p(k)_{i+1}=p(k)_i+1,\, k <r_{i+1},$$
$$p(k)_{i+1}=p(k+1)_{i+1}=p(k)_i,\, k=r_{i+1},$$
$$p(k+1)_{i+1}=p(k)_i+1,\, k>r_{i+1}.$$
Then, since $r_{i+1}<r_i<r_i+1$:
$$p(k)_{i+1}=p(k)_{i-1}+2,\, k <r_{i+1},$$
$$p(r_{i+1})_{i+1}=p(r_{i+1}+1)_{i+1}=p(r_{i+1})_{i-1}+1,$$
$$p(k+1)_{i+1}=p(k)_{i-1}+2, \, r_{i+1}<k<r_i,$$
$$p(r_i+1)_{i+1}=p(r_i+2)=p(r_i)_{i-1},$$
$$p(k+1)_{i+1}=p(k+1)_{i-1}+2,\, k>r_i+1.$$
On the other hand,
$$p'(k)_i=p'(k)_{i-1}+1,\, k <r_{i+1},$$
$$p'(k)_i=p'(k+1)_i=p'(k)_{i-1},\, k=r_{i+1},$$
$$p'(k+1)_i=p'(k)_{i-1}+1,\, k>r_{i+1},$$
and
$$p'(k)_{i+1}=p'(k)_i+1,\, k <r_i+1,$$
$$p'(k)_{i+1}=p'(k+1)_{i+1}=p'(k)_i,\, k=r_i+1,$$
$$p(k+1)_{i+1}=p'(k)_i+1,\, k>r_i+1.$$
Hence,
$$p'(k)_{i+1}=p'(k)_{i-1}+2,\, k <r_{i+1},$$
$$p'(r_{i+1})_{i+1}=p'(r_{i+1}+1)_{i+1}=p'(r_{i+1})_{i-1}+1,$$
$$p'(k+1)_{i+1}=p'(k)_{i-1}+2, \, r_{i+1}<k<r_i,$$
$$p'(r_i+1)_{i+1}=p'(r_i+2)=p(r_i)_{i-1},$$
$$p'(k+1)_{i+1}=p'(k+1)_{i-1}+2,\, k>r_i+1.$$
Since $p(k)_{i-1}=p'(k)_{i-1}$, for all $k\in \hat T$, we conclude that $p(k)_{i+1}=p'(k)_{i+1}$ for all $k\in \hat T$, hence $p(k)_t=p'(k)_t$, for all $t\in \hat T$.
By a similar calculation we obtain the equality of edge label multiplicities.
\end{proof}

\subsection{}\label{6.6} By the previous lemma we conclude that we may change the canonical sequence to an increasing sequence ${\bf r}=(r_1\le r_2\le \cdots \le r_t)$, with $r_i\le i$, for all $i$, with $1\le i\le t$. We call the equivalence class of ${\bf r}$ all the canonical sequences which are equal to ${\bf r}$ after a finite number of switchings as in Lemma \ref{6.5}.
\begin{lemma}
For two distinct increasing sequences ${\bf r}=(r_1\le r_2\le \dots \le r_t)$ and ${\bf s}=(s_1\le s_2\le \dots \le s_t)$, the corresponding label multiplicities are distinct.
\end{lemma}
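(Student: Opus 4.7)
The plan is to induct on $t$, with trivial base $t=1$. The key idea is to strengthen the inductive claim: rather than show directly that distinct non-decreasing sequences $\mathbf{r}$ (with $r_i\le i$) produce distinct multiplicities $\{p(k)_t\}$, I prove by induction on $t$ the following joint statement, valid for each such $\mathbf{r}$:
\begin{enumerate}
\item[(a)] $r_t=\max\{k:p(k)_t=p(k+1)_t\}$;
\item[(b)] $p(k)_t<p(k+1)_t$ for every $k>r_t$.
\end{enumerate}

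Given the joint claim, the lemma follows at once. If two distinct sequences $\mathbf{r}\ne\mathbf{s}$ produced the same vertex multiplicities, then (a) would force $r_t=s_t=:j$. Inverting the step-$t$ recurrence of \ref{6.4} (via $p(k)_{t-1}=p(k)_t-1$ for $k<j$, $p(j)_{t-1}=p(j)_t$, and $p(k)_{t-1}=p(k+1)_t-1$ for $k>j$) then recovers identical $(t-1)$-multiplicities for the shorter non-decreasing sequences $(r_1,\ldots,r_{t-1})$ and $(s_1,\ldots,s_{t-1})$, still distinct with $r'_i,s'_i\le i$, contradicting the outer inductive hypothesis.

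To prove the joint claim I set $j=r_t$ and $\ell=r_{t-1}$, so that $\ell\le j$ by monotonicity, and verify $p(k)_t<p(k+1)_t$ for each $k>j$ using the recurrence of \ref{6.4}. For $k>j+1$ the recurrence yields $p(k+1)_t-p(k)_t=p(k)_{t-1}-p(k-1)_{t-1}$, which is positive by IH (b) applied at $k-1>\ell$. For $k=j+1$ the recurrence yields $p(j+2)_t-p(j+1)_t=p(j+1)_{t-1}+1-p(j)_{t-1}$; if $\ell=j$ then IH (a) forces $p(j)_{t-1}=p(j+1)_{t-1}$, so the difference equals $1$, while if $\ell<j$ then IH (b) at $k=j$ forces $p(j)_{t-1}<p(j+1)_{t-1}$, so the difference is at least $2$. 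Combined with the duplication equality $p(j)_t=p(j+1)_t$ at $k=j$, this establishes both (a) and (b).

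The main obstacle I expect is recognising that (a) cannot be propagated on its own: the sub-case $k=j+1$ with $\ell<j$ in the proof of (a) cannot be handled without the strict inequality supplied by (b), so the two properties must be carried jointly through the induction. Once this bootstrapping is set up, the rest is direct bookkeeping with the recurrence formulae of \ref{6.4}, and the corresponding equality of edge multiplicities then also fails whenever the vertex multiplicities do, so no separate argument is needed for them.
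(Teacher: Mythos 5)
Your proof is correct, but it is organised quite differently from the paper's. The paper locates the first index $m$ at which $\mathbf{r}$ and $\mathbf{s}$ differ (say $r_m>s_m$), and then tracks the single coordinate $k=s_m$ forward through the recurrence of \ref{6.4}: for $\mathbf{r}$ every step from $m$ to $t$ increments $p(s_m)$ because $s_m<r_i$ for all $i\ge m$, giving $p^r(s_m)_t=p^r(s_m)_{m-1}+(t-m+1)$, whereas for $\mathbf{s}$ the step $m$ is the ``equal'' case, so $p^s(s_m)_t\le p^s(s_m)_{m-1}+(t-m)$; the two therefore differ. You instead prove a reconstruction statement: the strengthened invariant (a)--(b) shows that $r_t$ is readable off the final multiplicity vector as the largest $k$ with $p(k)_t=p(k+1)_t$, after which the step-$t$ recurrence can be inverted and the argument descends. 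I checked your inductive step (the split into $k=j+1$ with $\ell=j$ versus $\ell<j$, and $k>j+1$, with $\ell=r_{t-1}\le j=r_t$) and the index ranges; it all goes through, including the vacuous case $r_t=t$. Your version costs a bootstrapped induction hypothesis but buys more: it exhibits an explicit inverse of the map from increasing sequences to vertex label multiplicities, whereas the paper only certifies injectivity by exhibiting one coordinate where the images differ. Your closing remark about edge multiplicities should be read (as in the paper) as saying that distinctness of the vertex multiplicities alone already makes the combined label-multiplicity data distinct; no claim that the edge multiplicities themselves differ is needed or established.
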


\begin{proof}
Let $\{p^r(k)_t\}_{k\in \hat T}$ (resp. $\{p^s(k)_t\}_{k\in \hat T}$) be the vertex label multiplicities corresponding to the increasing sequence ${\bf r}$ (resp. ${\bf s}$).

Since ${\bf r}\ne {\bf s}$, there exists $m$, with $1<m\le t$ such that $r_i=s_i$ for all $i$, with $1\le i\le m-1$ and $r_m\ne s_m$. Without loss of generality, we may assume that $r_m>s_m$. Then $r_i>s_m$, for all $i$, with $i\ge m$. By the formulae \ref{6.4}, for $k=s_m$, one has $p^r(k)_t=p^r(k)_{m-1}+t-m+1$. On the other hand, again for $k=s_m$, we have $p^s(k)_m=p^s(k)_{m-1}$ and $p^s(k)_t\le p^s(k)_{m-1}+t-m$. Since $p^r(k)_{m-1}=p^s(k)_{m-1}$, we conclude that $p^r(k)_t\ne p^s(k)_t$, hence the assertion.
\end{proof}

\begin{cor}
The labelled graphs corresponding to distinct increasing sequences are distinct.
\end{cor}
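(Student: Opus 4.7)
The plan is to derive this corollary as an essentially immediate consequence of Lemma \ref{6.6}. The key observation is that the vertex label multiplicity function $\{p(k)_t\}_{k \in \hat{T}}$ is an intrinsic invariant of the labelled graph: it is simply the function that counts, for each $k \in \hat{T}$, how many vertices carry label $k$. Any isomorphism of labelled graphs (by definition, a bijection on vertices and edges preserving adjacency and all labels) must preserve this multiplicity function.

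First, I would note that by the inductive construction of \ref{5.6.1} and the computations of \ref{6.4}, the multiplicities $\{p(k)_t\}_{k\in \hat T}$ attached to a canonical sequence coincide with the actual counts of vertices of each label in the graph $\mathscr G(\textbf{c})$. Hence these numbers are determined by $\mathscr G(\textbf{c})$ as a labelled graph, independently of which canonical sequence (within an equivalence class in the sense of \ref{6.6}) is used to compute them.

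Next, suppose toward a contradiction that two distinct increasing sequences ${\bf r} = (r_1 \le r_2 \le \cdots \le r_t)$ and ${\bf s} = (s_1 \le s_2 \le \cdots \le s_t)$ produce the same labelled graph. Then the associated multiplicity functions $\{p^r(k)_t\}_{k\in \hat T}$ and $\{p^s(k)_t\}_{k\in \hat T}$ must agree. But Lemma \ref{6.6} asserts precisely that distinct increasing sequences yield distinct multiplicity functions, giving the required contradiction.

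There is no real obstacle here beyond stating the invariance cleanly; the whole content of the corollary is packaged into the preceding lemma. If one wanted to be more explicit, one could combine this with Lemma \ref{6.5}, which guarantees that every canonical sequence can be reduced to a unique increasing representative without altering the graph, so that the increasing sequences parametrise the equivalence classes faithfully; the corollary then records the converse direction, that this parametrisation has no further collisions.
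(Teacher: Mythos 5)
Your proof is correct and is essentially the argument the paper intends: the corollary is stated as an immediate consequence of Lemma \ref{6.6}, since the vertex label multiplicities are invariants of a labelled graph, and the lemma shows these invariants already separate distinct increasing sequences. Nothing further is needed.
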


\subsection{} \label{6.7}
This leads to the following result.

\begin{lemma} The isomorphism classes of labelled graphs $\mathscr G(\textbf{c})$ are parametrized by increasing sequences $1\leq r_1\leq r_2 \leq \ldots \leq r_t \leq t$, with $r_i\leq i$.  Their number is the Catalan number $\mathcal C_t$.
\end{lemma}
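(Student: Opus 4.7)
The plan is to parametrize the isomorphism classes of $\mathscr{G}(\textbf{c})$ by weakly increasing subdiagonal sequences and then recognise these as a classical Catalan family. First I would show that every canonical sequence $(n(k_1),\ldots,n(k_t))$ reduces, via iterated application of the switches of Lemma \ref{6.5}, to a weakly increasing sequence satisfying $r_i\le i$. Termination is immediate: each switch performed at a descent strictly increases $\sum_i r_i$ by one, while the bound $\sum_i r_i\le t(t+1)/2$ (preserved, because the new value $r_{i+1}<r_i\le i$ at position $i$ and the new value $r_i+1\le i+1$ at position $i+1$ both respect $r_j\le j$) forces the procedure to halt at an increasing sequence. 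Since the switches preserve all label multiplicities (Lemma \ref{6.5}) and these multiplicities determine the increasing sequence uniquely (Lemma \ref{6.6}), the terminal sequence is an isomorphism invariant of the labelled graph.

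Next I would establish the bijection. The Corollary after Lemma \ref{6.6} already gives that distinct reduced sequences yield non-isomorphic graphs. For surjectivity, given $1\le r_1\le\cdots\le r_t$ with $r_i\le i$, construct a linear order realising $\textbf{r}$ as canonical sequence by defining the permutation $(k_1,\ldots,k_t)$ of $(1,\ldots,t)$ right-to-left: let $k_i$ be the $r_i$-th smallest element of $\{1,\ldots,t\}\setminus\{k_{i+1},\ldots,k_t\}$ (well-defined since this complement has $i\ge r_i$ elements). A direct count shows $|\{k_j<k_i\,:\,j>i\}|=k_i-r_i$, whence $n(k_i)=r_i$ and $\textbf{r}$ is already the canonical sequence of $c_{k_1}<\cdots<c_{k_t}$. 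That linear orders with a common reduced sequence produce isomorphic labelled graphs follows by induction on $t$ from Lemma \ref{5.6.3}, each Lemma \ref{6.5} switch corresponding at the level of the linear order to an adjacent transposition of non-consecutive labels, an operation which preserves $\mathscr{G}(\textbf{c})$.

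For the enumeration I would invoke the classical Catalan bijection: send $\textbf{r}$ to the north-east lattice path from $(0,0)$ to $(t,t)$ whose $i$-th north step occurs at $x$-coordinate $r_i-1$, with east steps inserted accordingly. The defining inequality $r_i\le i$ translates precisely into the path staying weakly above the diagonal $y=x$, and such Dyck paths of semilength $t$ are counted by $\mathcal{C}_t$ (see \cite{S}).

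The most delicate step is the passage from Lemma \ref{6.5} switches on canonical sequences to Lemma \ref{5.6.3} transpositions on linear orders: Lemma \ref{5.6.3} as stated handles only the last two entries, so to conclude that linear orders with the same reduced sequence yield isomorphic graphs one must propagate it inductively to adjacent positions using the construction of \ref{5.6.1}. This is the principal technical hurdle, but should follow routinely from the inductive nature of that construction.
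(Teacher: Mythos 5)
Your proposal is correct and follows essentially the same route as the paper: reduce canonical sequences to increasing subdiagonal ones via the switches of Lemma \ref{6.5}, use Lemma \ref{6.6} and its corollary for injectivity, and prove well-definedness by induction on $t$ through the construction of \ref{5.6.1}, with Lemma \ref{5.6.3} handling the case where the switch occurs at the last two positions (where the transposed labels indeed differ by more than $1$). The step you flag as the "principal technical hurdle" is exactly the paper's case split $m<t-1$ versus $m=t-1$ and goes through as you describe; your explicit termination argument and the right-to-left construction realising any subdiagonal sequence are details the paper leaves implicit.
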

\begin{proof}
It remains to show that for a set of vertex and edge label multiplicities, there exists a unique graph $\mathscr G({\bf c})$.

Let ${\bf k}:=(k_1, k_2, \dots, k_t)$ and ${\bf l}:=(l_1, l_2, \dots, l_t)$ be two canonical sequences in the equivalence class of the increasing sequence ${\bf r}=(r_1\le r_2\le \cdots\le r_t)$ and suppose that they correspond to the linear orders ${\bf c}: (c_{i_1}<c_{i_2}<\cdots <c_{i_t})$ and  ${\bf c'}: (c_{j_1}<c_{j_2}<\cdots <c_{j_t})$ respectively. We will show that $\mathscr G({\bf c})=\mathscr G({\bf c'})$.

We may assume that ${\bf k}$ and ${\bf l}$ differ by a switch, as described in Lemma \ref{6.5}. That is there exists $m$, with $1< m< t-1$, such that $l_m=k_{m+1}$ and $l_{m+1}=k_m+1$, where $k_m>k_{m+1}$ and $l_n=k_n$ for all $n\ne m, m+1$.

We will argue by induction on $t$. For $t=2$ only one canonical sequence forms the equivalence class of ${\bf r}$ and for $t=3$ one may verify the statement by hand.

Notice that by definition of a canonical sequence $k_t=i_t$ and $l_t=j_t$. If $m<t-1$, one has that $i_t=j_t$; then the graphs $\mathscr G({\bf c})$ and $\mathscr G({\bf c'})$ are constructed from graphs $\mathscr G,\, \mathscr G'$ respectively by the method described in \ref{5.6.1}. Then the label multiplicities of $\mathscr G,\, \mathscr G'$ are identical, hence by the induction hypothesis $\mathscr G=\mathscr G'$ and so $\mathscr G({\bf c})=\mathscr G({\bf c'})$.

It remains to examine the case $m=t-1$; the canonical sequences are ${\bf k}=(k_1, k_2, \dots, k_{t-1}, k_t)$ and ${\bf l}=(k_1, k_2, \dots, k_t, k_{t-1}+1)$, with $k_{t-1}>k_t$.

Recall the beginning of Section \ref{6.5} how one obtains the original sequences ${\bf c},\, {\bf c'}$ from the canonical sequences ${\bf k},\, {\bf l}$. Then ${\bf c}: (c_{i_1}<c_{i_2}<\cdots c_{i_{t-2}}<c_{k_{t-1}+1}<c_{k_t})$, whereas ${\bf c'}: (c_{i_1}<c_{i_2}<\cdots c_{i_{t-2}}<c_{k_t}<c_{k_{t-1}+1})$. Moreover, $k_{t-1}+1-k_t>1$. The assertion follows by Lemma \ref{5.6.3}.

Finally, the number of such sequences is equal to $\mathcal C_t$, by \cite[Exercise 6.19 (s)]{S}.
\end{proof}

\section{Characterization of the subgraphs $\mathscr G({\bf c})$}\label{7}
Throughout this section fix a linear order ${\bf c}$ and let $c_s$ be the unique maximal element in ${\bf c}$.

Our aim is to show that $\mathscr G(\textbf{c})$ is uniquely determined as a labelled \textit{sub}graph of $\mathscr G_{t+1}$ by $\textbf{c}$. More precisely, we will show that $\mathscr G({\bf c})$ is the only $S$-subgraph of $\mathscr G_{t+1}$ (satisfying (P7) for ${\bf c}$). In particular, an $S$-subgraph of $\mathscr G_{t+1}$ (satisfying (P7) for ${\bf c}$) contains a unique pointed chain and its triads are compatible with ${\bf c}$ (see Remark \ref{5.5} (2)).

The proof of uniqueness is by induction.  For any $S$-graph $\mathscr G$, we will delete the edges in $\mathscr G$ labelled by $s$ and study the connected components of the resulting graph. It will turn out that this procedure is the reverse of the construction of $\mathscr G({\bf c})$ as described in \ref{5.6.1} and the desired result will follow. We first give some generalities on {\it breaking links}.

\subsection {Breaking Links}\label{7.1}

\subsubsection{} \label{7.1.1}
Let $\mathscr G$ be an $S$-subgraph of $\mathscr G_{t+1}$; it is connected by definition (condition (P5)).  Let $\mathscr C_i,\, i=1,2,\ldots,n$ be the connected components of the graph obtained from $\mathscr G$ by deleting all edges with label $s$.  Let $\ell_{v(\mathscr C_i)}$ denote the set of labels of the vertices of $\mathscr C_i$ and $\ell'_{v(\mathscr C_i)}$ denote the set of labels of the vertices on the $\bigsqcup\limits_{j\neq i}\mathscr C_j$ \textit{directly} joined to a vertex of $\mathscr C_i$ by an edge (necessarily of label $s$).

\begin{lemma}
For every connected component $\mathscr C_i$, one has $\ell_{v(\mathscr C_i)}\cup\ell'_{v(\mathscr C_i)}=\hat{T}$.
\end{lemma}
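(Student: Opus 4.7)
The plan is to exploit property (P7) of $S$-graphs together with the fact that $c_s$ is the unique maximal element of $\mathbf{c}$. Fix a connected component $\mathscr{C}_i$ and pick any vertex $v \in V(\mathscr{C}_i)$. Given an arbitrary label $k \in \hat{T}$, I want to show that $k$ belongs to $\ell_{v(\mathscr{C}_i)} \cup \ell'_{v(\mathscr{C}_i)}$.

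First I would apply (P7) to obtain an ordered path
$$v = v_1 \,\text{---}\, v_2 \,\text{---}\, \cdots \,\text{---}\, v_{n+1} = v'$$
from $v$ to some vertex $v'$ of label $k$, with edge labels $i_1, i_2, \ldots, i_n$ satisfying $c_{i_1} < c_{i_2} < \cdots < c_{i_n}$. Since $c_s$ is the unique maximum of $\mathbf{c}$, at most one of the $i_j$ can equal $s$, and if some $i_j = s$, then necessarily $j = n$, i.e. only the last edge of the path can have label $s$.

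Now I split into two cases. If no edge in the ordered path has label $s$, then none of the edges is deleted in forming the components $\mathscr{C}_j$, so the entire path (and in particular the endpoint $v'$) lies inside $\mathscr{C}_i$. Hence $k \in \ell_{v(\mathscr{C}_i)}$. Otherwise, the last edge $(v_n, v_{n+1})$ has label $s$, while all earlier edges have labels different from $s$; thus $v_n$ still lies in $\mathscr{C}_i$, and $v_{n+1} = v'$ lies in some other component $\mathscr{C}_{j}$ with $j \neq i$, directly joined to $\mathscr{C}_i$ by an edge of label $s$. Therefore $k \in \ell'_{v(\mathscr{C}_i)}$.

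Since $k \in \hat{T}$ was arbitrary, this gives $\hat{T} \subseteq \ell_{v(\mathscr{C}_i)} \cup \ell'_{v(\mathscr{C}_i)}$, and the reverse inclusion is trivial since all labels of vertices in $V(\mathscr{G}) \subseteq V(\mathscr{G}_{t+1})$ lie in $\hat{T}$. There is no serious obstacle: the argument is essentially bookkeeping driven by (P7), and the only point to handle with care is confirming that if the last edge has label $s$, the second-to-last vertex really is in $\mathscr{C}_i$ — which follows immediately from the fact that every earlier edge, having label $i_j < s$, survives the deletion.
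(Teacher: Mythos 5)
Your proof is correct and follows essentially the same route as the paper: apply (P7) to get an ordered path to a vertex of label $k$, note that maximality of $c_s$ forces any edge of label $s$ to be the last one, and conclude that $k$ lies in $\ell_{v(\mathscr C_i)}$ or $\ell'_{v(\mathscr C_i)}$ accordingly. Your write-up merely makes explicit the bookkeeping the paper leaves implicit.
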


\begin{proof}  This is immediate from condition (P7).  In more detail, let $v$ be a vertex of $\mathscr C_i$ and choose $j \in \hat{T}$.  Then an ordered chain from $v$ to a vertex $v'$ of label $j$ in $\mathscr G$ must either lie entirely in $\mathscr C_i$ in which case $j \in \ell_{v(\mathscr C_i)}$, or since $c_s$ is the unique largest element of $\textbf{c}$, the vertex $v'$ must be directly linked to a vertex in $\mathscr C_i$ by an edge labelled by $s$ in which case $j \in \ell'_{v(\mathscr C_i)}$.
\end{proof}

\subsubsection {}\label{7.1.2}
Retain the above hypotheses and notation and fix a connected component $\mathscr C_i$ (or simply $\mathscr C$). It is immediate from (P3) that the coefficient of $c_s$ in $f_v$ is independent of the choice of a vertex $v$ of $\mathscr C$.

Fix a complete tableau $\mathscr T$ and let $v$ be the vertex corresponding to $[\mathscr T] \in H^{t+1}$.  Let $u$ be the height of $\mathscr T$.

We claim that for any $k \in T$, the coefficient of $c_k$ in $f_v$ takes the form $r^i-r^j$ for some pair $i,j \in \hat{T}$. Moreover if the coefficient is $r^i-r^j$ with $i<j$, then $i=k$. This follows from \cite[Lemma 5.2]{J}, but we give the details for completeness.

By \ref{5.2}, the label $b(i, j)$ of the block of $\mathscr T$ is $k$ if and only if $i=k$, $j$ is odd and $C_k$ is not right extremal, or $i=k+1$, $j$ is even and $C_{k+1}$ is not left extremal.

Recall the definition of $f_{[\mathscr T]}$ in Section \ref{5.3} and the behaviour of the height function of a complete tableau \cite[Lemma 2.3.3]{J}. Set $a_k$ to be the coefficient of $c_k$ in $f_{[\mathscr T]}$.
\begin{enumerate}
\item Let $\height\, C_{k+1}<\height\, C_k=:u$.
\begin{enumerate}
\item If $u$ is odd, then $\height\, C_{k+1}=u-1$ and $a_k=r^k-r^j$, where $C_j$ is the right neighbour of $C_k$ at height $u$.
\item If $u$ is even, then $\height\, C_{k+1}=u-1$ or $u-2$ and $a_k=r^k-r^j$, where $C_j$ is the right neighbour of $C_k$ at level $u-1$. Then also $\height\, C_j=u-1$.
\end{enumerate}
Suppose $i \in [k,j]\setminus \{k,j\}$.  Then the height of $C_i$ is $\leq u-1$.
\item Let $u:=\height\, C_{k+1}>\height\, C_k$.
\begin{enumerate}
\item If $u$ is odd, then $\height\, C_k=u-1$ or $u-2$ and $a_k=-(r^j-r^k)$, where $C_j$ is the left neighbour of $C_{k+1}$ at level $u-1$. Then $\height\, C_j=u-1$.
\item If $u$ is even, then $\height\, C_k=u-1$ and $a_k=-(r^j-r^k)$, where $C_j$ is the left neighbour of $C_{k+1}$ at level $u$.
\end{enumerate}
Suppose $i \in [j,k]\setminus \{j\}$.  Then the height of $C_i$ is $\leq u-1$.
\item Finally, let $u:=\height\, C_k=\height\, C_{k+1}$.
\begin{enumerate}
\item If $u$ is odd, then $a_k=r^k-r^{k+1}$.
\item If $u$ is even, then $a_k=0$.
\end{enumerate}
\end{enumerate}

\begin{lemma} Let $v\in \mathscr C$ and let $f_v$ be the corresponding function.
\begin{enumerate}[label=(\roman*)]
\item If the coefficient of $c_k$ in $f_v$ is equal to $(r^k-r^j)$, for $k<j$, then $[k,j]\cap \ell_{v(\mathscr C)}\subset \{k\}$ and $[k,j]\cap \ell'_{v(\mathscr C)}\subset \{k, j\}$.
\item If the coefficient of $c_k$ in $f_v$ is equal to $-(r^j-r^k)$, for $k\ge j$, then $[j,k]\cap \ell_{v(\mathscr C)}=\emptyset$ and $[j,k]\cap \ell'_{v(\mathscr C)}\subset \{k\}$.
\end{enumerate}
\end{lemma}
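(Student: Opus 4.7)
The plan is to exploit the structural information encoded in the hypothesis via the classification of the coefficient $a_k$ of $c_k$ in $f_v$ carried out just before the lemma in \ref{7.1.2}. Case (i), $a_k(v) = r^k - r^j$ with $k < j$, arises from cases (1)(a), (1)(b), (3)(a) of \ref{7.1.2}; case (ii), $a_k(v) = -(r^j - r^k)$ with $j \leq k$, arises from cases (2)(a), (2)(b), (3)(b). In all these cases, the observation recorded there is that in the tableau $\mathscr T$ of $v$ every column $C_i$ with $i$ strictly between $\min(k,j)$ and $\max(k,j)$ has height at most $u-1$, where $u$ is the relevant local maximum. The lemma therefore amounts to a statement about which columns can become strongly extremal (equivalently, which labels can arise) while moving from $v$ along edges with label $\neq s$ inside $\mathscr C$.

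To implement this I would combine the above with the ordered path property (P7). Since $c_s$ is the unique maximum of $\mathbf c$, any ordered path out of $v$ contains at most one edge of label $s$, necessarily as its final edge. Hence for each label $\ell \in \hat T$ the ordered path from $v$ to a vertex of label $\ell$ either stays in $\mathscr C$, giving $\ell \in \ell_{v(\mathscr C)}$, or exits via a single terminal $s$-edge, giving $\ell \in \ell'_{v(\mathscr C)}$. For $\ell$ in the forbidden range (the strict interior of $[k,j]$ in (i), or of $[j,k]$ in (ii)), I would take such a path to a vertex $w$ of label $\ell$ and evaluate $f_v - f_w$ coefficientwise via (P3) and Lemma \ref{5.6.7}; matching the coefficient of $c_k$ against the prescribed value $r^k - r^j$ (resp.\ $-(r^j - r^k)$) produces a contradiction, because having $C_\ell$ as the strongly extremal column of $\mathscr T_w$ would require $C_\ell$ to overtake all other columns in height, which the bound from \ref{7.1.2} forbids along paths built only from edges with label $\neq s$.

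For the boundary inclusions $[k,j]\cap \ell'_{v(\mathscr C)} \subset \{k,j\}$ (resp.\ $[j,k] \cap \ell'_{v(\mathscr C)} \subset \{k\}$), the same argument would be repeated with an $s$-edge allowed as the final step. This edge contributes $c_s(r^{j_{n-1}} - r^\ell)$ to $f_v - f_w$ but leaves $a_k$ unchanged (as $s \neq k$), so the height bound still constrains $\ell$, and only the endpoints $k$ and, in case (i), $j$ of the prescribed interval survive. The main obstacle I expect is the delicate bookkeeping in the contradiction step: turning the qualitative tableau-theoretic claim ``$C_\ell$ cannot overtake $C_k$ or $C_j$ in height without an $s$-edge'' into an unambiguous numerical incompatibility with $a_k(v) = r^k - r^j$, handled by an inductive tracking of column heights along paths in $\mathscr C$ and made tractable by the distinctness of edge and vertex labels along ordered paths (Lemma \ref{5.6.7}).
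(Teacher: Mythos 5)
Your proposal correctly locates the relevant input (the case analysis of the coefficient $a_k$ in \ref{7.1.2}), but the mechanism you build on top of it does not prove the statement. The sets $\ell_{v(\mathscr C)}$ and $\ell'_{v(\mathscr C)}$ record the labels of \emph{all} vertices of $\mathscr C$ and of \emph{all} vertices joined to $\mathscr C$ by an $s$-edge, whereas the ordered paths supplied by (P7) only reach particular vertices; showing that the ordered path from $v$ to \emph{a} vertex of label $\ell$ cannot stay inside $\mathscr C$ (or exit by a terminal $s$-edge) would not show that \emph{no} vertex of $\mathscr C$ carries the label $\ell$. Worse, the contradiction you aim for is not available at this stage: the (P7)-plus-maximality-of-$c_s$ argument you rehearse is precisely the proof of the preceding Lemma \ref{7.1.1}, which says that \emph{every} label lies in $\ell_{v(\mathscr C)}\cup\ell'_{v(\mathscr C)}$. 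The present lemma is a direct structural upper bound with no reductio in it, and it is only the \emph{combination} of the two lemmas that produces a contradiction — exploited in Corollary \ref{7.1.3} to force $j=k+1$ (or the coefficient to vanish). By trying to derive a contradiction inside the lemma you are in effect proving the corollary before the lemma, and you have nothing concrete to contradict.

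The paper's actual argument is much more local and does not involve paths at all: the label of a vertex is the index of the strongly extremal column of its tableau, and a neighbouring vertex can have label $i$ only if $C_i$ is quasi-extremal (Section \ref{5.4}). The shape constraints listed in \ref{7.1.2} (all columns strictly between the relevant indices have height $\le u-1$, together with the stated heights of $C_k$, $C_{k+1}$ and $C_j$) then immediately determine which columns indexed by $[k,j]$ (resp.\ $[j,k]$) can be strongly extremal — only $C_k$ in case (i), none in case (ii) — and which can be quasi-extremal — only $C_k,C_j$ in case (i), only $C_k$ in case (ii). Your substitute for this, an ``inductive tracking of column heights along paths'' combined with coefficient-matching via (P3), is not developed and is genuinely problematic: for $k\neq s$ the coefficient of $c_k$ is not constant on $\mathscr C$ (it changes across edges of label $k$), so matching it against $r^k-r^j$ at the far end of a path yields no clean incompatibility. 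If you want a path-free of label $s$ to propagate the hypothesis to every $w\in\mathscr C$, the honest route is the one implicit in the paper for the case $k=s$ actually used later: the coefficient of $c_s$ is constant on $\mathscr C$ by (P3), so the tableau-level analysis applies verbatim at every vertex of $\mathscr C$ and to all of its neighbours.
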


\begin{proof}
For any pair $p,\, q\in \hat T$ with $p<q$, set $[p,q]:=\{i \in \mathbb N\,|\,p\leq i \leq q\}$.

Recall the notion of a strongly extremal column  (Section \ref{2.2}) and the fact that if $C_i$ is the strongly extremal column of $[\mathscr T]$, then the label on the vertex $v$ corresponding to $[\mathscr T]$ is $i$ (Section \ref{5.4}).  

Recall the notion of a quasi-extremal column of a complete tableau $\mathscr T$ and the fact that there is a link of $[\mathscr T]$ to $[\mathscr T']$ with strongly extremal column $C_i$, only if the $i$th column of $\mathscr T$ is quasi-extremal (Section \ref{5.4}). 

For (i) we are in situation (1) or (3) (a) of what precedes the lemma. Then $C_i\in [k,j]$ can only be the strongly extremal column of $\mathscr T$  if $i=k$. Moreover, $C_i$, with $i\in [k, j]$ can be a quasi-extremal column if $i=j$ in case (1) (a) or $i\in \{k, j\}$ in case (1) (b).  This gives (i).

For (ii), we are in situation (2) or (3) (b) of what precedes the lemma. Then $C_i\in [j, k]$ can never be strongly extremal; it can be quasi-extremal only if $i=k$.
\end{proof}

\subsubsection {}\label{7.1.3}
Retain the above notation and conventions. Let $\mathscr C$ be a component. Combining lemmata \ref{7.1.1} and \ref{7.1.2}, one has:

\begin{cor}  Either the coefficient of $c_s$ of $f_v,\, v \in \mathscr C$ is zero, or it equals $r^s-r^{s+1}$.
\end{cor}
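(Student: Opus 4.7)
The plan is to combine Lemmata \ref{7.1.1} and \ref{7.1.2} directly, treating the case $k=s$ in Lemma \ref{7.1.2} and using the covering property of labels from Lemma \ref{7.1.1} to severely constrain the interval $[s,j]$ or $[j,s]$ that appears.

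Concretely, fix $v\in \mathscr C$ and let $\mathscr T$ be the complete tableau representing $v$. The coefficient of $c_s$ in $f_v$ takes one of the three forms recalled before Lemma \ref{7.1.2}: either it is zero (case (3)(b)), in which case there is nothing to prove, or it has the shape $r^s-r^j$ with $s<j$ (cases (1) and (3)(a) of the discussion), or it has the shape $-(r^j-r^s)$ with $j\le s$ (case (2)). I will treat the latter two possibilities separately.

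In the first non-trivial case, Lemma \ref{7.1.2}(i) applied with $k=s$ gives $[s,j]\cap \ell_{v(\mathscr C)}\subset\{s\}$ and $[s,j]\cap\ell'_{v(\mathscr C)}\subset\{s,j\}$. But by Lemma \ref{7.1.1} every element of $\hat T$, and in particular every element of $[s,j]$, lies in $\ell_{v(\mathscr C)}\cup\ell'_{v(\mathscr C)}$. Combining, $[s,j]\subset\{s,j\}$, which forces $j=s+1$ and yields precisely the coefficient $r^s-r^{s+1}$. In the second non-trivial case, Lemma \ref{7.1.2}(ii) applied with $k=s$ gives $[j,s]\cap\ell_{v(\mathscr C)}=\emptyset$ and $[j,s]\cap\ell'_{v(\mathscr C)}\subset\{s\}$. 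Again by Lemma \ref{7.1.1} the interval $[j,s]$ is covered by $\ell_{v(\mathscr C)}\cup\ell'_{v(\mathscr C)}$, so $[j,s]\subset\{s\}$, forcing $j=s$ and hence coefficient $r^s-r^s=0$, contradicting the assumption that we were in a non-zero case (and thus showing this case does not actually arise with a non-zero contribution).

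The two cases exhaust the alternatives for the coefficient of $c_s$ in $f_v$, so the dichotomy claimed in the corollary follows. There is no real obstacle here: the argument is a short bookkeeping check that the strong constraints of Lemma \ref{7.1.2} on the labels appearing inside the interval $[s,j]$ (respectively $[j,s]$) collide with the surjectivity statement of Lemma \ref{7.1.1}, and the only configurations that survive are $j=s+1$ with coefficient $r^s-r^{s+1}$ or a trivial coefficient. One minor point to keep in mind while writing the argument cleanly is that the coefficient of $c_s$ in $f_v$ is independent of $v\in\mathscr C$ by (P3), so the dichotomy is really a property of the component $\mathscr C$ rather than of individual vertices; this was already recorded at the start of \ref{7.1.2} and should be invoked to justify speaking of ``the coefficient of $c_s$'' for $\mathscr C$.
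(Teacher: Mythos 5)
Your proposal is correct and is exactly the paper's argument: the Corollary in \ref{7.1.3} is stated as an immediate consequence of combining Lemma \ref{7.1.1} (the covering $\ell_{v(\mathscr C)}\cup\ell'_{v(\mathscr C)}=\hat T$) with the interval constraints of Lemma \ref{7.1.2} applied at $k=s$, and you have simply written out the short bookkeeping the paper leaves implicit. The case analysis is exhaustive and the conclusion ($j=s+1$ in the first case, coefficient zero in the second) matches the paper.
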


\subsection{$S$-graphs in $\mathscr G_{t+1}$}\label{7.2}
\begin{thm}
The graph $\mathscr G({\bf c})$ is uniquely determined as a labelled subgraph of $\mathscr G_{t+1}$ by $\textbf{c}$. In particular, it is the unique $S$-subgraph of $\mathscr G_{t+1}$.
\end{thm}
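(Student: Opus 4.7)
The argument proceeds by induction on $t$. The base case $t=1$ is immediate: $\mathscr G_2$ is itself the pointed chain with two vertices, and by (P4)--(P5) no proper $S$-subgraph of it can satisfy (P7).

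For the inductive step, suppose the result holds up to $t-1$, and let $\mathscr G$ be an arbitrary $S$-subgraph of $\mathscr G_{t+1}$ satisfying (P7) for the given linear order $\mathbf{c}$. The core idea is to run the construction of Section \ref{5.6.1} in reverse. Delete every edge of $\mathscr G$ with label $s$, and let $\mathscr C_1,\ldots,\mathscr C_n$ be the resulting connected components. By Corollary in \ref{7.1.3}, the coefficient of $c_s$ in $f_v$ is constant as $v$ ranges over $\mathscr C_i$, and equals either $0$ or $r^s-r^{s+1}$; call these \emph{plus} and \emph{minus} components respectively. Using (P3), an edge of label $s$ between components forces them to be of opposite sign, and combined with Lemma \ref{7.1.1} and (P2) this pairing is unique. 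The first step is therefore to prove that in fact $n=2$, with exactly one plus and one minus component.

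Next I would relabel each component as prescribed by \ref{5.6.1} (leaving labels in $[1,s-1]$ fixed, and decreasing those in $[s+1,t+1]$ by one on edges and vertices, on the understanding that label-$s$ vertices appear only on the minus side and label-$(s+1)$ vertices only on the plus side). I would check that each relabelled component is an $S$-subgraph of $\mathscr G_t$ with respect to $\mathbf{c}':=\mathbf{c}\setminus\{c_s\}$. Conditions (P1)--(P3) and (P6) are inherited from $\mathscr G$ under restriction; connectedness (P5) holds by construction; the pointed chain (P4) sits in the plus component since the driving vertex $v_h$ has zero $c_s$-coefficient by Remark \ref{5.5}(1), and the analogous chain in the minus component is located through the $\varphi$-correspondence of Step 1; finally (P7) for $\mathbf{c}'$ follows because any ordered path in $\mathscr G$ with respect to $\mathbf{c}'$ cannot traverse an edge of label $s$ (as $c_s$ is maximal), so it remains intact after deletion. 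By the inductive hypothesis each component coincides with $\mathscr G(\mathbf{c}')$ as a labelled graph.

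It then remains to verify that the label-$s$ edges of $\mathscr G$ connect exactly the vertex pairs $(v,\varphi(v))$ prescribed by \ref{5.6.1}, namely those with labels $s+1$ and $s$ on the plus and minus side respectively. This follows from Lemma \ref{7.1.2}(i) applied to the contribution $c_s(r^s-r^{s+1})$ passing across such an edge, together with (P2). This identifies $\mathscr G$ with $\mathscr G(\mathbf{c})$ as labelled subgraphs of $\mathscr G_{t+1}$, yielding the theorem.

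The hard part will be Step 1, namely ruling out additional components sharing one of the two invariant $c_s$-values. The coefficient analysis of \ref{7.1.3} gives a necessary invariant but not immediately a count; ruling out a spurious third component requires exploiting (P7) globally, since every vertex of $\mathscr G$ must admit an ordered path to vertices of all labels in $\hat T$, and the only way to reach a label-$s$ vertex from the plus side (respectively a label-$(s+1)$ vertex from the minus side) is through a single edge of label $s$, forcing the $\varphi$-correspondence between plus and minus components to be a graph isomorphism and forbidding any further component.
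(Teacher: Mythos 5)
Your overall strategy (induct on $t$, delete the edges labelled $s$, sort the components by the $c_s$-coefficient of their functions, identify each with $\mathscr G(\mathbf{c}')$, then reassemble via \ref{5.6.1}) is the same as the paper's. But there is a genuine gap at the point where you invoke the inductive hypothesis. The hypothesis is a uniqueness statement about $S$-\emph{sub}graphs of $\mathscr G_t$, not about abstract $S$-graphs, and Section \ref{7.4} shows this distinction is essential: there exist $S$-graphs for a given order $\mathbf{c}$ that are not $\mathscr G(\mathbf{c})$, and even $S$-graphs properly containing it. You write that you would ``check that each relabelled component is an $S$-subgraph of $\mathscr G_t$,'' but everything you then verify --- (P1)--(P3), (P5), (P6), the location of the pointed chain, (P7) for $\mathbf{c}'$ --- only establishes the abstract $S$-graph axioms. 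You never produce an embedding of the component, whose vertices are a priori equivalence classes in $H^{t+1}$, into $\mathscr G_t$. This is exactly the step the paper supplies: by \cite[Lemmas 4.6, 4.7]{J} the deplete tableaux with zero $c_s$-coefficient are precisely those with $C_s$ empty, and these classes form a subgraph of $\mathscr G_{t+1}$ naturally isomorphic to $\mathscr G_t$ containing every ``plus'' component; dually, the classes whose functions have $c_s$-coefficient $r^s-r^{s+1}$ are those with $C_{s+1}$ of height $1$ over a complete bottom row, forming a second copy of $\mathscr G_t$ containing every ``minus'' component. Without this identification the induction cannot be run.

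The same omission undermines your ``hard part,'' the claim $n=2$. Once each plus (resp.\ minus) component is known to be an $S$-subgraph of the \emph{single} copy of $\mathscr G_t$ just described, the inductive uniqueness forces all plus components to coincide with $\mathscr G(\mathbf{c}')$ and hence with one another; the count $n=2$ falls out for free rather than needing a separate global argument with (P7) and the $\varphi$-correspondence, which as you sketch it would not by itself exclude, say, two disjoint plus--minus pairs chained together. I would also note that your final step (label-$s$ edges join only label-$(s+1)$ vertices of the plus component to label-$s$ vertices of the minus component) is obtained in the paper from the observation that $C_s$ (resp.\ $C_{s+1}$) is never strongly extremal for the relevant tableaux, i.e.\ again from the explicit description of the two copies of $\mathscr G_t$, rather than from Lemma \ref{7.1.2}(i) alone.
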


\begin{proof}
We will prove the statement by induction. For $t=1$ one has $\mathscr G(\textbf{c})=\mathscr G_2$ and the assertion is trivial.

Let $\mathscr G$ be an $S$-subgraph of $\mathscr G_{t+1}$ and delete its edges labelled by $s$.

Let $\mathscr C$ be a component of the graph thus obtained from $\mathscr G$ whose functions have a zero coefficient of $c_s$ and $\mathscr C'$ a component whose functions have a coefficient $r^s-r^{s+1}$ of $c_s$.

Recall Section \ref{2.4} (2) that there is exactly one deplete tableau in each equivalence class in $H^{t+1}$. By \cite [Lemma 4.6]{J}, the set $\mathscr S$ of all deplete tableaux of order $t+1$ with zero coefficient of $c_s$ are exactly those with $C_s$ being an empty column.  By \cite [Lemma 4.7]{J}, $\{[\mathscr T]\,|\,\mathscr T\in \mathscr S\}$ is naturally isomorphic to $H^t$. Hence the subgraph of $\mathscr G_{t+1}$ with set of vertices $\{[\mathscr T]\,|\,\mathscr T\in \mathscr S\}$ is isomorphic to $\mathscr G_t$ and contains $\mathscr C$. Obviously $C_s$ cannot be a strongly extremal column of any $\mathscr T \in \mathscr S$ and a fortiori for any vertex $[\mathscr T]$ of $\mathscr C$, that is to say the label $s$ does not occur on the vertices of $\mathscr C$.

Again the set $\mathscr S'$ of all deplete tableaux of order $t+1$ with zero coefficient of $c_{t-s+1}$ are exactly those with $C_{t-s+1}$ being an empty column.  Its dual $\mathscr S'^*$ consists of all deplete tableaux with $C_{s+1}$ being of height $1$ with a complete bottom row. Then $\{[\mathscr T]\,|\,\mathscr T\in \mathscr S'^*\}$ is naturally isomorphic to $H^t$. The coefficient of $c_s$ in any $f_{[\mathscr T]},\, \mathscr T \in \mathscr S'^*$ is $r^s-r^{s+1}$ and by duality again every function with this property so obtains.  In particular $\mathscr C'$ is a subgraph of the graph with vertices $\{[\mathscr T]\,|\,\mathscr T\in \mathscr S'^*\}$.   Clearly $C_{s+1}$ cannot be a strongly extremal column of any $\mathscr T \in \mathscr S'^*$ and a fortiori of any $[\mathscr T] \in \mathscr C'$, that is to say the label $s+1$ does not occur on the vertices of $\mathscr C'$.

Since the coefficient of $c_s$ in $f_v$,  $v \in \mathscr C$ (resp. $v \in \mathscr C'$) is zero (resp. $r^s-r^{s+1}$), it follows by the definition of a link (Section \ref{5.4}) that an edge with label $s$ in $\mathscr G$ can only join a vertex with label $s+1$ in $\mathscr C$ to a vertex with label $s$ in $\mathscr C'$.  Since $c_s$ is the unique largest element of $\textbf{c}$ it follows (just as in the proof of Lemma \ref {7.2}) that $\mathscr C, \mathscr C'$ must satisfy (P7) for ${\bf c'}$.  Thus by the induction hypothesis they must, under the appropriate relabelling, be just $\mathscr G(\textbf{c}')$.  Then recalling \ref{5.6.1} how $\mathscr G(\textbf{c})$ is obtained from $\mathscr G(\textbf{c}')$ it follows that $\mathscr G$ is $\mathscr G(\textbf{c})$. Hence the assertion.
\end{proof}

\begin{cor}
A proper subgraph of an $S$-graph of order $t$ of $\mathscr G_{t+1}$ (i.e. a subgraph of a $\mathscr G({\bf c})$) is not an $S$-graph of order $t$. In other words, $\mathscr G({\bf c})$ are minimal $S$-graphs of order $t$.
\end{cor}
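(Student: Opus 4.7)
The plan is to argue by contradiction, leveraging Theorem \ref{7.2} together with the fact (built into Section \ref{5.6.1} and Section \ref{6}) that any $\mathscr G(\textbf{c}')$ has exactly $2^t$ vertices. I first suppose that some proper subgraph $\mathscr G'\subsetneq \mathscr G(\textbf{c})$ happens to be an $S$-graph of order $t$. Since $\mathscr G'\subseteq \mathscr G_{t+1}$ inherits all the $S$-graph properties by being a subgraph, property (P7) holds in $\mathscr G'$ with respect to some linear order $\textbf{c}'$ on $\{c_1,\ldots,c_t\}$, so that Theorem \ref{7.2} applied to $\textbf{c}'$ forces $\mathscr G'=\mathscr G(\textbf{c}')$.

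Next I compare sizes: the hypercube description (Section \ref{6}) and the inductive construction (Section \ref{5.6.1}) both yield $|V(\mathscr G(\textbf{c}'))|=|V(\mathscr G(\textbf{c}))|=2^t$, so the inclusion $V(\mathscr G(\textbf{c}'))\subseteq V(\mathscr G(\textbf{c}))$ is actually an equality of vertex sets, i.e., $H^{t+1}(\textbf{c}')=H^{t+1}(\textbf{c})$. By the definition recalled in Section \ref{5.6.1}, each of $\mathscr G(\textbf{c})$ and $\mathscr G(\textbf{c}')$ is the labelled subgraph of $\mathscr G_{t+1}$ carried by its $H^{t+1}(\cdot)$ set, so sharing a vertex set inside $\mathscr G_{t+1}$ forces $\mathscr G(\textbf{c})=\mathscr G(\textbf{c}')$ as labelled subgraphs. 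Hence $\mathscr G'=\mathscr G(\textbf{c})$, contradicting the assumption that $\mathscr G'$ is a proper subgraph.

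The only point that genuinely requires care is the uniqueness of $\mathscr G(\textbf{c})$ as a labelled subgraph of $\mathscr G_{t+1}$ given its vertex set --- equivalently, that $\mathscr G(\textbf{c})$ is the induced labelled subgraph of $\mathscr G_{t+1}$ on $H^{t+1}(\textbf{c})$. This is the content of the first assertion of Theorem \ref{7.2} (already compatible with how $\mathscr G(\textbf{c})$ was set up in Section \ref{5.6.1}), so no further work beyond invoking these two ingredients is needed to push the argument through.
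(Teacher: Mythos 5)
Your argument is correct and matches the paper's intent: the corollary is stated as an immediate consequence of Theorem \ref{7.2}, and your route --- identify the putative proper $S$-subgraph with some $\mathscr G(\textbf{c}')$ via the theorem, then use $|V(\mathscr G(\textbf{c}'))|=|V(\mathscr G(\textbf{c}))|=2^t$ together with the fact that each $\mathscr G(\cdot)$ is the subgraph of $\mathscr G_{t+1}$ carried by its vertex set $H^{t+1}(\cdot)$ --- is exactly the natural reading. One phrasing slip: (P7) holds for $\mathscr G'$ because you \emph{assumed} $\mathscr G'$ is an $S$-graph, not because subgraphs of $\mathscr G_{t+1}$ inherit it (only (P1), (P2), (P3), (P6) are inherited); this does not affect the argument.
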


\subsection{} \label{7.4}
The $\mathscr G({\bf c})$ is not the unique $S$-graph for the order ${\bf c}$ in general, only as a subgraph of $\mathscr G_{t+1}$. Indeed, consider ${\bf c} : c_2<c_3<c_1$. Then as we saw in \ref{5.6.2}, $\mathscr G({\bf c})$ is the graph:
$$\bfig
\node a(0,0)[2]
\node b(300,300)[4]
\node c(0,-400)[1]
\node d(300,-700)[4]
\node A(1000,0)[2]
\node B(700,300)[3]
\node C(1000,-400)[1]
\node D(700,-700)[3]
\arrow/-/[a`b;2]
\arrow|l|/-/[a`c;1]
\arrow|b|/-/[d`c;2]
\arrow/-/[A`B;2]
\arrow|r|/-/[A`C;1]
\arrow|b|/-/[D`C;2]
\arrow|a|/-/[b`B;3]
\arrow|b|/-/[d`D;3]
\efig$$
The graph:
$$\bfig
\node a(0,0)[2]
\node b(300,300)[4]
\node c(0,-400)[1]
\node d(300,-700)[4]
\node A(1000,0)[2]
\node B(700,300)[3]
\node C(1000,-400)[1]
\node D(700,-700)[3]
\node A'(1300,300)[4]
\node A''(1000,600)[3]
\arrow/-/[a`b;2]
\arrow|l|/-/[a`c;1]
\arrow|b|/-/[d`c;2]
\arrow/-/[A`B;2]
\arrow|r|/-/[A`C;1]
\arrow|b|/-/[D`C;2]
\arrow|a|/-/[b`B;3]
\arrow|b|/-/[d`D;3]
\arrow|b|/-/[A`A';3]
\arrow|b|/-/[A'`A'';2]
\efig$$
is an $S$-graph that contains $\mathscr G({\bf c})$.

There are $S$-graphs which do not contain a $\mathscr G({\bf c})$, as the graph below:
$$\bfig
\node a(0,0)[2]
\node b(300,300)[3]
\node c(0,-400)[4]
\node d(300,-700)[1]
\node e(1400,0)[2]
\node f(1800,0)[3]
\node p(2200,0)[4]
\node q(2200,-400)[1]
\node r(2500,300)[2]
\node s(2500,-700)[2]
\node P(3200,0)[4]
\node Q(3200,-400)[1]
\node R(2900,300)[3]
\node S(2900,-700)[3]
\node A(1000,0)[1]
\node B(700,300)[4]
\node C(1000,-400)[3]
\node D(700,-700)[2]
\arrow/-/[a`b;1]
\arrow|l|/-/[a`c;2]
\arrow|b|/-/[d`c;3]
\arrow/-/[A`B;3]
\arrow|r|/-/[A`C;2]
\arrow|b|/-/[D`C;1]
\arrow|a|/-/[b`B;2]
\arrow|b|/-/[d`D;2]
\arrow|a|/-/[A`e;1]
\arrow|a|/-/[e`f;2]
\arrow|a|/-/[p`f;3]
\arrow/-/[p`r;2]
\arrow|l|/-/[p`q;1]
\arrow|b|/-/[s`q;2]
\arrow/-/[P`R;2]
\arrow|r|/-/[P`Q;1]
\arrow|b|/-/[S`Q;2]
\arrow|a|/-/[r`R;3]
\arrow|b|/-/[s`S;3]
\efig$$

In the above graph, let $v$ be the vertex of label $2$ in the chain connecting the two octagons. Note that there are ordered paths from $v$ to two vertices of label $1$, namely $$\xy \morphism(0,0)|a|/{-}/<400,0>[2`1;1]\endxy$$ and $$\xy \morphism(0,0)|a|/{-}/<400,0>[2`3;2] \morphism(400,0)|a|/{-}/<400,0>[3`4;3]\morphism(800,0)|a|/{-}/<400,0>[4`1;1]\endxy.$$
In particular, $v'$ is not uniquely determined by $v$ in (P7) for an arbitrary $S$-graph, unlike the case of $\mathscr G({\bf c})$ (Lemma \ref{5.6.6}).

Moreover, note that the triads of the above $S$-graph are not compatible with ${\bf c}$, unlike the case of $\mathscr G({\bf c})$ (Remark \ref{5.6.1}). Indeed, the above $S$-graph possesses $(1<2)$ as well as $(2<1)$ triads.

\subsection{Tableaux versus Ideals} \label{7.3}

\subsubsection{}
Let us examine the example that we mentioned in Section \ref{1.4} of the introduction, namely compare the graph $\mathscr N_3$ corresponding to the set of nilpotent ideals in the Borel subalgebra of $\mathfrak{sl}_3$ and the graph of links $\mathscr G_3$. Below we draw $\mathscr N_3$ on the left and $\mathscr G_3$ on the right; edges in $\mathscr N_3$ correspond to inclusion of ideals:

\begin{center}
$\bfig
\node a(800,0)[\mathfrak g_{\theta}\oplus \mathfrak g_{\alpha_1}]
\node b(0,0)[\mathfrak g_{\theta}\oplus \mathfrak g_{\alpha_2}]
\node c(400,400)[\mathfrak g_\theta]
\node d(400,-400)[\mathfrak n]
\node A(400,800)[\{0\}]
\arrow/-/[c`A;]
\arrow/-/[b`c;]
\arrow/-/[b`d;]
\arrow/-/[a`d;]
\arrow/-/[a`c;]
\efig$$
\qquad
$$\bfig
\node a(800,0)[2]
\node b(0,0)[1]
\node c(400,400)[1]
\node d(400,-400)[3]
\node A(400,800)[3]
\arrow/-/[c`A;2]
\arrow/-/[b`d;1]
\arrow/-/[a`d;2]
\arrow/-/[a`c;1]
\efig$
\end{center}

We know of $\mathcal C_2=2$ subsets of the set of nilpotent ideals of cardinality $2^2=4$; namely, the commutative ideals and the nilradicals of the parabolic subalgebras. The subgraph corresponding to the latter is not connected. In contrast, the two $S$-subgraphs with 4 vertices of $\mathscr G_3$ are connected (we constructed them in Section \ref{5.6.2}).

\subsubsection{One more example.}

The graph of links $\mathcal G_4$ is the graph:
$$\bfig
\node a(0,0)[1]
\node b(300,300)[2]
\node c(0,-400)[3]
\node d(300,-700)[4]
\node e(-300, 300)[4]
\node f(0, 600)[4]
\node g(0,1000)[1]
\node A(1000,0)[4]
\node B(700,300)[3]
\node C(1000,-400)[2]
\node D(700,-700)[1]
\node E(1300, 300)[1]
\node F(1000, 600)[1]
\node G(1000,1000)[4]
\arrow/-/[a`b;1]
\arrow|l|/-/[a`c;2]
\arrow|b|/-/[d`c;3]
\arrow/-/[A`B;3]
\arrow|r|/-/[A`C;2]
\arrow|b|/-/[D`C;1]
\arrow|a|/-/[b`B;2]
\arrow|b|/-/[d`D;2]
\arrow|b|/-/[a`e;3]
\arrow|a|/-/[A`E;1]
\arrow|b|/-/[b`f;3]
\arrow|a|/-/[B`F;1]
\arrow|l|/-/[f`g;1]
\arrow|r|/-/[F`G;3]
\efig$$

In contrast, the graph $\mathscr N_4$ of nilpotent ideals in the Borel subalgebra of $\mathfrak{sl}_4$ is :
$$\bfig
\node a(0,0)[\circ]
\node b(300,-300)[\circ]
\node c(600,0)[\circ]
\node d(300,300)[\circ]
\node e(-200, 400)[\circ]
\node f(100, 700)[\circ]
\node g(300,900)[\circ]
\node A(0,200)[\circ]
\node B(300,-100)[\circ]
\node C(600,200)[\circ]
\node D(300,500)[\circ]
\node E(800, 400)[\circ]
\node F(500, 700)[\circ]
\node G(300,1100)[\circ]
\arrow/-/[a`b;]
\arrow/-/[b`c;]
\arrow/-/[d`c;]
\arrow/-/[d`a;]
\arrow/-/[A`B;]
\arrow/-/[B`C;]
\arrow/-/[D`C;]
\arrow/-/[D`A;]
\arrow/-/[a`A;]
\arrow/-/[b`B;]
\arrow/-/[c`C;]
\arrow/-/[d`D;]
\arrow/-/[A`e;]
\arrow/-/[C`E;]
\arrow/-/[F`D;]
\arrow/-/[f`D;]
\arrow/-/[e`f;]
\arrow/-/[F`E;]
\arrow/-/[F`g;]
\arrow/-/[f`g;]
\arrow/-/[g`G;]
\efig$$

It is an easy exercise to show that one cannot obtain the graph $\mathscr N_4$ by just adjoining edges to $\mathscr G_4$, as in the case $t=3$. Indeed, suppose one can obtain $\mathscr G_4$ from $\mathscr N_4$ in this fashion and define the valence of a vertex to be the number of edges that emanate from it. We name the vertices of the two graphs as follows:
\begin{center}
$\bfig
\node a(0,0)[E']
\node b(200,200)[D']
\node c(0,-300)[F']
\node d(200,-500)[G']
\node e(-200, 200)[C']
\node f(0, 400)[B']
\node g(0,700)[A']
\node A(700,0)[E]
\node B(500,200)[D]
\node C(700,-300)[F]
\node D(500,-500)[G]
\node E(900, 200)[C]
\node F(700, 400)[B]
\node G(700,700)[A]
\arrow/-/[a`b;]
\arrow|l|/-/[a`c;]
\arrow|b|/-/[d`c;]
\arrow/-/[A`B;]
\arrow|r|/-/[A`C;]
\arrow|b|/-/[D`C;]
\arrow|a|/-/[b`B;]
\arrow|b|/-/[d`D;]
\arrow|b|/-/[a`e;]
\arrow|a|/-/[A`E;]
\arrow|b|/-/[b`f;]
\arrow|a|/-/[B`F;]
\arrow|l|/-/[f`g;]
\arrow|r|/-/[F`G;]
\efig$$
\qquad
$$\bfig
\node a(0,0)[h']
\node b(300,-300)[j]
\node c(600,0)[h]
\node d(300,300)[e]
\node e(-200, 400)[f']
\node f(100, 700)[c']
\node g(300,900)[b]
\node A(0,200)[g']
\node B(300,-100)[i]
\node C(600,200)[g]
\node D(300,500)[d]
\node E(800, 400)[f]
\node F(500, 700)[c]
\node G(300,1100)[a]
\arrow/-/[a`b;]
\arrow/-/[b`c;]
\arrow/-/[d`c;]
\arrow/-/[d`a;]
\arrow/-/[A`B;]
\arrow/-/[B`C;]
\arrow/-/[D`C;]
\arrow/-/[D`A;]
\arrow/-/[a`A;]
\arrow/-/[b`B;]
\arrow/-/[c`C;]
\arrow/-/[d`D;]
\arrow/-/[A`e;]
\arrow/-/[C`E;]
\arrow/-/[F`D;]
\arrow/-/[f`D;]
\arrow/-/[e`f;]
\arrow/-/[F`E;]
\arrow/-/[F`g;]
\arrow/-/[f`g;]
\arrow/-/[g`G;]
\efig$
\end{center}
Since by adding edges to $\mathscr G_4$ the valence of its vertices increases or remains the same, vertices of $\mathscr G_4$ that could correspond to the vertex $a$ of $\mathscr N_4$ are the $A, A', C, C'$. By the symmetry of $\mathscr G_4$ it is enough to investigate the cases $A, C$.

If we take $A$ to be the vertex corresponding to $a$, then necessarily $B$ corresponds to $b$ and by the symmetry of $\mathscr N_4$ we may assume that $D$ corresponds to $c$. Both $c$ and $D$ have valence 3. But $c$ is connected to $f$ which has valence 2, whereas $D$ is connected to $D', E$ which have both valence 3. Hence a contradiction.

If now we take $C$ to be the vertex corresponding to $a$, then $E$ corresponds to $b$ and $\{c, c'\}$ to $\{D, F\}$. By the symmetry of $\mathscr N_4$ we may take $D$ to be $c$ and $F$ to be $c'$. This forces $B$ to be $f$, $A$ to be $g$ and $D'$ to be $d$. Since $c'$ and $g$ are linked to $d$ in $\mathscr N_4$, we draw an edge linking $F$ with $D'$ and an edge linking $A$ with $D'$. Then $G$ necessarily corresponds to $f'$ and $G'$ to $g'$. Since $g'$ is linked to $d$, we draw an edge between $D'$ and $G'$. But then the valence of $D'$ becomes 6 which exceeds the valence of $d$, which is 5. Again a contradiction. Thus we conclude that $\mathscr G_4$ does not obtain from $\mathscr N_4$ by just adding edges.

Now consider the directed graph corresponding to $\mathscr G_4$ where arrows point to the vertex obtained by adjoining a block as described in \ref{5.4} :
$$\bfig
\node a(0,0)[1]
\node b(300,300)[2]
\node c(0,-400)[3]
\node d(300,-700)[4]
\node e(-300, 300)[4]
\node f(0, 600)[4]
\node g(0,1000)[1]
\node A(1000,0)[4]
\node B(700,300)[3]
\node C(1000,-400)[2]
\node D(700,-700)[1]
\node E(1300, 300)[1]
\node F(1000, 600)[1]
\node G(1000,1000)[4]
\arrow/<-/[a`b;1]
\arrow|l|/->/[a`c;2]
\arrow|b|/<-/[d`c;3]
\arrow/->/[A`B;3]
\arrow|r|/->/[A`C;2]
\arrow|b|/<-/[D`C;1]
\arrow|a|/<-/[b`B;2]
\arrow|b|/<-/[d`D;2]
\arrow|b|/->/[a`e;3]
\arrow|a|/->/[A`E;1]
\arrow|b|/->/[b`f;3]
\arrow|a|/->/[B`F;1]
\arrow|l|/->/[f`g;1]
\arrow|r|/->/[F`G;3]
\efig$$
A maximal chain has length 6. In contrast, a maximal chain in $\mathscr N_4$ (which is a directed graph by inclusion of ideals) has length 7.

\section*{Index of notations}
\noindent Notations are listed below where they first appear.\\\\
1.1 $\mathcal C_t$\\
2.1 $\mathscr D$, $C_i$, $\height\, C,\, \height\, \mathscr D$\\
2.3.1 $\widehat{\mathscr D}$\\
2.3.2 $C^+$, $\mathscr D^+$\\
2.3.4 $[\mathscr D]$, $H^{t+1}$, $H^{t+1}_j$\\
2.5 $\mathscr D^*$\\
3.2 $\height\, [\mathscr D]$, $^sH^r$, $^sH^r_j$, $^{\le s}H^r_j$\\
4.1 $^s\mathscr C^r$, $^s\mathscr C^r_j$, $\mathscr C_{t+1}(q)$\\
5.1 $\mathscr T$, $[\mathscr T]$\\
5.3 $f_{[\mathscr T]}$\\
5.4 $T$, $\hat T$, $\mathscr G_{t+1}$, $V(\mathscr G_{t+1})$, $E(\mathscr G_{t+1})$, $V(\mathscr G_{t+1})^k$\\
5.5 $f_v$, $h$, $v_h$\\
5.6.1 ${\bf c}$, $H^{t+1}({\bf c})$, $\mathscr G({\bf c})$, $\mathscr G^+$, $\mathscr G^-$, $\varphi$\\
6.4 $n(k_i)$, $p(k)_i$, $q(k)_i$\\
7.1.1 $\ell_{v(\mathscr C_i)}$, $\ell'_{v(\mathscr C_i)}$

\section*{Index of notions}
\noindent Notions are listed below where they first appear.\\\\
1.1 Catalan set.\\
2.1 Diagram of order $t+1$, height of diagram.\\
2.2 Left/right extremal column, strongly extremal column, boundary conditions.\\
2.3.1 Domino, Complete/deplete diagram.\\
2.3.4 Equivalence class of diagrams.\\
2.5 Dual diagram.\\
4.1 Catalan polynomial.\\
5.1 Labelling of diagrams, tableau.\\
5.2 Partial order assigned to an equivalence class of tableaux.\\
5.3 Function assigned to an equivalence class of tableaux.\\
5.4 Left/right quasi-extremal column, graph of links.\\
5.5 Pointed chain, $(i<j)$-triad, ordered path, driving function.\\
5.6.5 $S$-graph.\\
6.4 Canonical sequence.

\end {document}